\crefname{lem}{lemma}{Lemma}
\theoremstyle{plain}
\newtheorem{thm}{Theorem}[section]
\newtheorem{theorem}{Theorem}[section]
\theoremstyle{plain}
\newtheorem{lemma}[thm]{Lemma}
\newtheorem{proposition}[thm]{Proposition}
\newtheorem{prop}[thm]{Proposition}
\theoremstyle{definition}
\newcommand{\<}{\left\langle}
\renewcommand{\>}{\right\rangle}
\renewcommand{\(}{\left(}
\renewcommand{\)}{\right)}
\newcommand{\abs}[1]{\left\vert#1\right\vert}
\newcommand{\norm}[1]{\left\Vert#1\right\Vert}
\newcommand{\be} {\begin{equation}}
	\newcommand{\ee} {\end{equation}}
\newcommand{\bea} {\begin{eqnarray}}
	\newcommand{\eea} {\end{eqnarray}}
\newcommand{\Bea} {\begin{eqnarray*}}
	\newcommand{\Eea} {\end{eqnarray*}}
\definecolor{mycolor}{rgb}{0.122, 0.435, 0.698}%
\newcommand{\De} {\Delta}
\newcommand{\la} {\lambda}
\newcommand{\grad}{\ensuremath{\nabla}}
\newcommand{\R}{\ensuremath{\mathbb{R}}}
\newcommand{\rN}{\ensuremath{\mathbb{R}^N}}
\newcommand{\Dm}{\ensuremath{\Delta_{g}}}
\newcommand{\M}{\ensuremath{\mathcal{M}}}
\newcommand{\E}{\ensuremath{\mathcal{E}}}
\newcommand{\J}{\mathcal{J}}
\newcommand{\eps}{\varepsilon}
\newcommand{\al} {\alpha}
\newcommand{\de} {\delta}
\newcommand{\ga} {\gamma}
\newcommand{\I}{\infty}
\newcommand{\f}{\frac}
\newcommand{\ml}{\mathcal}
\newcommand{\ef}{\eqref}
\newcommand{\nee}{\notag\ee}
\makeatletter \@addtoreset{equation}{section} \makeatother
\begin{document}

\title[Spike layered solutions for elliptic systems on Riemannian Manifolds]{Spike layered solutions for elliptic systems on Riemannian Manifolds}

\author*[1]{Anusree R Kannoth}\email{anusre1412@gmail.com}

\author[2]{Bhakti Bhusan Manna}\email{bbmanna@math.iith.ac.in}
\equalcont{These authors contributed equally to this work.}


\affil{\orgdiv{Department of Mathematics}, \orgname{IIT Hyderabad}, 
	\orgaddress{\street{Kandi}, \city{Sangareddy}, \postcode{502284}, \state{Telangana}, \country{India}}}

\abstract{In this article, we study the following Hamiltonian system:
	\begin{equation*}
		\begin{cases}
			\begin{aligned}
				-\eps^{2}\Dm u +u &= |v|^{q-1}v,\\
				-\eps^{2}\Dm v +v &= |u|^{p-1}u, && \text{ in } \M,	\\
				 \quad u,v &>0, && \text{ in } \M,
			\end{aligned}
		\end{cases}
	\end{equation*}
	where $\M$ is a smooth, compact, and connected Riemannian manifold of dimension $N\ge 3$ without boundary. The exponents $p,q>1$ are assumed to lie below the critical hyperbola, ensuring subcritical growth conditions. We investigate a sequence of least energy critical points of the associated dual functional and analyze their concentration behavior as $\eps\to 0$. Our main result shows that the sequence of solutions exhibits point concentration, with the concentration occurring at a point where the scalar curvature of $\M$ attains its maximum. }

\keywords{point concentration, subcritical nonlinearity, elliptic system, Riemannian manifold, singularly perturbed equation}

\pacs[MSC Classification]{58J05, 35J50, 35B25}

\maketitle

\section{Introduction}

Let $(\M, g)$ be a connected compact smooth Riemannian manifold of dimension $N\geq 3$ without boundary. Consider the singularly perturbed elliptic system 
\begin{equation}\label{eq0}\tag{$P_{\varepsilon}$}
	\begin{cases}
		\begin{aligned}
			-\eps^{2}\Dm u +u &= |v|^{q-1}v,\\ 			
			-\eps^{2}\Dm v +v &= |u|^{p-1}u,  &&\text{ in } \M, \\
			u,v &>0,  &&\text{ in } \M, 
		\end{aligned}
	\end{cases}
\end{equation}
where $\Dm$ is the Laplace-Beltrami operator on $\M$. The exponents $p,q$ satisfy $p,q>1$ and
\begin{equation}\label{HC}\tag{HC}
	\dfrac{1}{p+1}+\dfrac{1}{q+1}>\dfrac{N-2}{N}.
\end{equation}

The corresponding scalar case was studied extensively by several authors. In \cite{MR2180862}, the authors studied the following problem: 
\begin{equation}\label{eq0.2}
	\begin{aligned}
		-&\eps^{2}\Dm u +u = |u|^{p-1}u && \text{ in } \M,\\
	\end{aligned}
\end{equation}
and proved the existence of a mountain pass solution $u_{\eps}$ that exhibits a spike layer, which, as $\eps \to 0$, converges to the maximum point of the scalar curvature of $\M$. The geometry of the domain plays an important role in the concentration phenomena of such perturbed equations. 
Benci et al. proved in \cite{MR2360924} that the existence of solutions to $\eqref{eq0.2}$ depends on the topological property of the manifold by showing that $\eqref{eq0.2}$ has at least cat$(\M)+1$ nontrivial solutions for sufficiently small $\eps$, where cat$(\M)$ denotes the Lusternik-Schnirelmann category of $\M$. In another work by Micheletti and Pistoia \cite{MR2448651}, existence of single-peak solutions concentrating at a stable critical point of the scalar curvature of $\M$ was proved, and in \cite{MR2471314}, Dancer, Micheletti and Pistoia showed the existence of solutions with $k$ peaks that concentrate at an isolated minimum of the scalar curvature of $\M$, as $\eps \to 0$. Some results on the existence and multiplicity of sign-changing solutions are also known; we refer readers to \cite{MR2536955, MR2660847, MR3161661}.

In this work we have adopted the dual variational technique for the existence. This is a standard technique used to prove the existence of ground states. In \cite{MR2183832, MR1971309, MR2499901} the method was considered to study strongly coupled Hamiltonian systems in $\rN$ with singular perturbations. The authors studied the dependence of the concentration profile of the solutions with the given data. In bounded domains, the equation $\eqref{eq0}$ was studied by Avila and Yang in \cite{MR1978382}. They proved the existence of a nontrivial positive solution, with a maximum point approaching a common point on the boundary as $\eps$ goes to zero. Later, Ramos and Yang proposed a more direct method in \cite{MR2135746}, generalizing the result for more general nonlinearities. In \cite{MR2057542}, it was shown that the concentration occurs at the point of maximum of the mean curvature of the boundary, whereas the analogous Dirichlet problem in bounded domains was studied in \cite{MR2408342}. We also refer the readers to \cite{MR3110757}, where a different analytic technique is used to investigate the concentration profile.

There is also an extensive literature exploring the presence of potentials in either the linear or nonlinear components of the system, thus increasing the class of singularly perturbed elliptic systems where concentration phenomena can be analyzed. We refer to the works \cite{MR2199384, MR3645727, MR2966117, MR4826650, MR3559922, MR2730547, MR2229874, MR2338665, MR2287528, MR2342612} where the authors investigated such systems in $\rN$ using techniques like dual variational framework, fractional-order Sobolev space and Nehari manifold techniques, establishing results on existence, multiplicity and concentration for small $\eps>0$.

In this work  we have considered the Hamiltonian elliptic system on a compact and connected Riemannian manifold $\M$ without boundary. We have studied the asymptotic behavior of the least energy critical points of the corresponding dual energy. It is shown that the solutions exhibits single spike patterns and the location for the concentration is at the maximum of the scalar curvature of $\M$.

The limiting profile depends on the solutions of
\begin{equation}\label{eqES}
	\begin{cases}
		\begin{aligned}
			-\Delta u +u &= |v|^{q-1}v,\\
			-\Delta v +v &= |u|^{p-1}u && \text{ in } \rN,
		\end{aligned}
	\end{cases}
\end{equation}
which we generally refer to as the entire system. This equation plays a crucial role in the energy estimate. The ground state solution of $\eqref{eqES}$ exhibits desirable properties, including radial symmetry and exponential decay at infinity. It also determines the first term in the energy expansion. The limiting profile of the perturbed equations is likewise determined by the ground state solution of $\eqref{eqES}$. The major problem to study the asymptotics comes from the non-uniqueness of the solution of the entire system $\eqref{eqES}$. A careful analysis of the size of the solution space is done to address this issue. 

\subsection*{Geometric Preliminaries:}
For $p\in \M$, let $B_g(p,\de):=\{x\in\M: d_g(x,p)<\de\}$ and $T_p(\M)$ be the tangent space of $\M$ at $p$. We denote the exponential map by $exp_p:T_p(\M) \to \M$. Since $\M$ is compact, we have a positive number $\de>0$ independent of the point $p$ and \textit{Riemann normal coordinates} $\Psi_p=(x_1,\dots, x_N): B_g(p,\de)\to \rN$, which is a diffeomorphism on $B_g(p,\de)$. Let us denote the  injectivity radius of $\M$ by $R^*>0$.

Let $\langle \cdot,\cdot \rangle:=\sum_{i,j=1}^{N}g_{ij}dx_i\otimes dx_j$ be the metric tensor. We set $G(x):=(g_{ij}(x))$, $g(x)=\det G(x)$ and $G^{-1}(x)=\(g^{ij}(x)\)$. Also, $d_{g}(p,q)=|\Psi_p(q)|$ provided $q \in B_{g}(p,R^*).$ Then for every $x \in B(0,R^*)=\Psi_{p}(B_{g}(p,R^*))$, we have that (see \cite{MR0768584}),
\begin{equation}\label{metricexpansion}
	\begin{cases}
		\begin{aligned}
			g_{ij}(x)&=\delta_{ij}-\dfrac{1}{3}\<\ml{R}(x, e_{i})x, e_{j}\>+O(|x|^{3}),\\
			g^{ij}(x)&=\delta_{ij}+\dfrac{1}{3}\<\ml{R}(x, e_{i})x, e_{j}\>+O(|x|^{3}),\\
			\sqrt{g}(x)&=1-\dfrac{1}{6}Ric(x, x)+O(|x|^{3}),
		\end{aligned}
	\end{cases}
\end{equation}
where $\mathcal{R}$ and $Ric$ denotes the Riemann curvature tensor and Ricci tensor respectively. The scalar curvature $S(p)$ at a point $p \in \M$ is then defined to be the trace of $Ric$. Also, let us recall, in local coordinates the Laplace-Beltrami operator takes the form
\begin{equation}\label{LapBel1}
	\Dm u(x) =\Delta u(x)+(g_{p}^{ij}(x)-\delta_{ij})\partial_{ij} u-g_{p}^{ij}(x)\Gamma_{ij}^{k}(x)\partial_{k} u,
\end{equation}
where $\Gamma_{ij}^{k}$'s are the Christoffel symbols of the Levi-Civita connection with respect to $g$. And by mean value theorem, we have
\begin{equation}\label{eqChristoffel}
	\Gamma_{ij}^{k}(x)=\Gamma_{ij}^{k}(0)+O(|x|)=O(|x|).
\end{equation} 

\subsection*{Strong Solution:}
By a strong solution to \eqref{eq0}, we mean a pair
\begin{equation*}
	(u_{\eps}, v_{\eps}) \in \E:=W^{2, \tfrac{q+1}{q}}(\M)\times W^{2,\tfrac{p+1}{p}}(\M),
\end{equation*}
satisfying \eqref{eq0} a.e. We define the corresponding energy functional by 
\begin{equation}\label{OEF}
	\J_{\eps}(u, v):=\int_{\M}(\eps^{2}(\nabla_{g} u)\cdot (\nabla_{g} v)+uv)dv_g-\int_{\M}(F(u)+G(v))dv_g,
\end{equation}
where $F(u)=\frac{u^{p+1}}{p+1}$, $G(v)=\frac{v^{q+1}}{q+1}$ and $dv_g$ is the volume form on $\M$. The main result we prove is the following:
\begin{theorem}\label{Thm}
	There exists $\eps_{0}>0$ such that for any $\eps \in (0,\eps_{0})$, the elliptic system $\eqref{eq0}$ has a least energy solution $(u_{\eps}, v_{\eps}) \in \E$ with the following properties:
	\begin{enumerate}[label=(\roman*)]
		\item There exists $K>0$ such that 
		\begin{equation*}
			\max\{\|u_{\eps}\|_{L^{\infty}}, \|v_{\eps}\|_{L^{\infty}}\} \leq K, \qquad \text{ for all } \eps \in (0,\eps_{0}).
		\end{equation*}
		\item For any global maximum points $p_\eps,q_\eps \in\M$ of $u_{\eps}$ and $v_{\eps}$, respectively, we have
		\begin{equation*}
			\frac{d_g(p_\eps,q_\eps)}{\eps} \to 0, \quad \text{ as } \quad \eps \to 0.
		\end{equation*}
		\item The points $p_{\eps}$ and $q_{\eps}$ converge to a unique and common point $p_{0} \in \M$, where the scalar curvature attains its global maximum; i.e.,   
		\begin{equation}
			S(p_{0})=\max_{p \in \M}S(p).
		\end{equation}
	\end{enumerate}
\end{theorem}

This paper is structured as follows: In Section 2, we prove the existence of a critical point using dual variational formulation and also study the solution of the entire problem and its properties. Now, using solution of entire system as test function, we calculate an upper bound for the energy in Section 3. Section 4 is dedicated to studying the behavior of the solution of \eqref{eq0} and a lower energy estimate using this solution. Finally, we end the paper by giving an outline of the bootstrap argument for elliptic system in the Appendix.


\section{The Dual Variational formulation and Existence}

We shall use the dual variational formulation, which is an effective technique in variational methods for handling these kind of coupled systems. A notable advantage of this technique is the Mountain Pass geometry of the level sets. In this section, we explore the fundamental relationship with the natural variational formulation of the system and its dual counterpart. The existence of a critical point for the dual functional then follows as a consequence of the Mountain Pass theorem. Before we derive the dual formulation, we fix some notations. We define
\begin{align} \label{ab}
	\al:=\frac{p+1}{p}, \qquad \beta:=\frac{q+1}{q},
\end{align}
and denote the corresponding spaces
\begin{align*}
	&(I) \qquad X:=L^{p+1}(\M) \times L^{q+1}(\M)   \text{ and, the dual } \,X^{*}=L^{\al}(\M) \times L^{\beta}(\M),\\
	&(II) \quad Y:=L^{p+1}(\rN) \times L^{q+1}(\rN)    \text{ and, the dual } \,Y^{*}=L^{\al}(\rN) \times L^{\beta}(\rN),
\end{align*}
endowed with the norms:
\begin{align*}
	&\|w\|^{2}_{A\times B}=\|w_{1}\|^{2}_{A}+\|w_{2}\|^{2}_{B}, \quad w=(w_{1},w_{2}) \in A\times B.
\end{align*}
From \eqref{HC}, we have
\begin{equation}\label{eqExponent}
	p+1<\beta^{*}=\dfrac{N(q+1)}{Nq-2(q+1)} \text{ and } 	q+1<\alpha^{*}=\dfrac{N(p+1)}{Np-2(p+1)}.
\end{equation}
Since $\M$ is a compact Riemannian manifold, we have that the inclusions
\begin{equation*}
	i_{p}: W^{2,\frac{p+1}{p}}(\M) \hookrightarrow L^{q+1}(\M) \quad \text{and } \quad i_{q}:W^{2,\frac{q+1}{q}}(\M)\hookrightarrow L^{p+1}(\M),
\end{equation*}
are compact (Theorem 3.6, \cite{MR1481970}). Now to set the dual variational formulation we use the following lemma: 
\begin{lemma}\label{LP-estimate}
	For any $p\in(1,\infty)$ and $f\in L^p(\M)$, the problem
	\be -\De_g u +u=f \text{ in } \M,\nee 
	has a unique solution $u\in W^{2,p}(\M)$. Moreover, for some positive constant $C$ independent of $f$,
	\be \norm{u}_{W^{2,p}(\M)}\le C\norm{f}_{L^p(\M)}.\nee
\end{lemma}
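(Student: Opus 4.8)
The plan is to deduce this standard $L^p$ solvability/regularity statement for the operator $-\Delta_g+\mathrm{id}$ on the closed manifold $\M$ from the classical interior Calder\'{o}n--Zygmund estimates in coordinate charts, glued together by a partition of unity, using the Hilbert-space theory only as a starting point.

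First I would record a starting solution: the bilinear form $a(u,\varphi)=\int_\M(\nabla_g u\cdot\nabla_g\varphi+u\varphi)\,dv_g$ is bounded and coercive on $W^{1,2}(\M)$, so Lax--Milgram gives, for every $f\in L^2(\M)$, a unique weak solution $u\in W^{1,2}(\M)$; and when $f\in C^\infty(\M)$, writing the equation in normal coordinates (where it is uniformly elliptic with smooth coefficients) and running the usual interior elliptic bootstrap upgrades $u$ to $C^\infty(\M)$, in particular $u\in W^{2,p}(\M)$ for every $p$.

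Next I would establish the a priori estimate. Fix $p\in(1,\infty)$ and suppose $u\in W^{2,p}(\M)$ solves $-\Delta_g u+u=f$. Pick a finite atlas with a subordinate partition of unity $\{\chi_k\}$; in each chart $\chi_k u$ solves a uniformly elliptic equation with smooth coefficients whose right-hand side is controlled in $L^p$ by $\|f\|_{L^p}$ and $\|u\|_{W^{1,p}}$, so the interior $L^p$ estimate gives $\|\chi_k u\|_{W^{2,p}}\le C(\|f\|_{L^p}+\|u\|_{W^{1,p}})$. Summing over $k$ and absorbing the first-order term via the interpolation inequality $\|u\|_{W^{1,p}}\le\eta\|u\|_{W^{2,p}}+C_\eta\|u\|_{L^p}$ gives
\begin{equation*}
\|u\|_{W^{2,p}(\M)}\le C\big(\|f\|_{L^p(\M)}+\|u\|_{L^p(\M)}\big).
\end{equation*}
To drop the last term I would argue by contradiction: a normalized sequence $u_n$ with $\|u_n\|_{W^{2,p}}=1$ and $\|{-\Delta_g u_n+u_n}\|_{L^p}\to 0$ has, by compactness of $W^{2,p}(\M)\hookrightarrow L^p(\M)$ together with the displayed estimate, a subsequence converging in $W^{2,p}(\M)$ to some $u$ with $-\Delta_g u+u=0$ and $\|u\|_{W^{2,p}}=1$; but bootstrap makes such a $u$ smooth (for $p<2$ one first raises its integrability chart by chart), and then $0=a(u,u)=\|u\|_{W^{1,2}}^2$, a contradiction. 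Hence $\|u\|_{W^{2,p}(\M)}\le C\|f\|_{L^p(\M)}$, and in particular a solution, if it exists, is unique.

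For existence with $f\in L^p(\M)$ I would approximate $f$ by $f_m\in C^\infty(\M)$ in $L^p$, take the smooth solutions $u_m$ provided by the first step, and observe that $\{u_m\}$ is Cauchy in $W^{2,p}(\M)$ by the a priori estimate; the limit solves $-\Delta_g u+u=f$ almost everywhere and inherits the bound. The analytic heart of all this is the interior Calder\'{o}n--Zygmund estimate for second-order elliptic operators, which I would quote rather than reprove; the actual work — and the only place I expect genuine care to be required — is the globalization: patching the chart estimates, absorbing the lower-order terms, and running the compactness--uniqueness argument that removes the $\|u\|_{L^p}$ term.
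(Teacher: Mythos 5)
Your proof is correct and carries out in detail exactly what the paper gestures at: the paper's entire proof is the one-line remark that the result ``can be easily verified using local co-ordinate systems, partition of unity and the standard $L^p$-estimate.'' Your Lax--Milgram starting point, the partition-of-unity patching of the interior Calder\'on--Zygmund estimate, the compactness--uniqueness removal of the $\|u\|_{L^p}$ term, and the density argument for existence are precisely the standard machinery the paper is invoking.
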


\begin{proof}
	The result can be easily verified using local co-ordinate systems, partition of unity and the standard $L^p$-estimate.
\end{proof}

Define $T_{1,p}: L^{\al}(\M) \to W^{2,\al}(\M)$ and $T_{1,q}: L^{\beta}(\M) \to W^{2,\beta}(\M)$  as
\begin{equation*}
	T_{1,p}=T_{1,q} := \left( -\varepsilon^{2}\Dm+\mathit{id}\right) ^{-1}.
\end{equation*}
Then the compositions $T_{1}=i_{q}\circ T_{1,q}$, $T_{2}=i_{p}\circ T_{1,p}$ given by
\begin{equation*}
	T_{1}: L^{\beta}(\M) \to L^{p+1}(\M) \quad \text{ and }\quad T_{2}: L^{\al}(\M) \to L^{q+1}(\M),
\end{equation*}
are compact, linear and self-adjoint. Let us define
\begin{equation*}
	T_{\varepsilon} : X^{*} \to X \quad \text{ as }\quad T_{\varepsilon}:=
	\begin{pmatrix}
		0 & T_{1}\\
		T_{2} & 0
	\end{pmatrix}.
\end{equation*}
Precisely, $T_{\varepsilon}$ is defined by
\begin{equation}
	\left\langle T_{\varepsilon}w,\eta\right\rangle = \eta_{1}T_{1}w_{2}+ \eta_{2}T_{2}w_{1},
\end{equation}  
where $\eta=(\eta_{1},\eta_{2})\in X^{*}, \; w=(w_{1},w_{2}) \in X^{*}$. We define the dual functional on $X^{*}$ as
\begin{equation*}
	\begin{aligned}
		I_{\eps}(w) :=&\int_{\M}\dfrac{p}{p+1}|w_{1}|^{\tfrac{p+1}{p}}+\dfrac{q}{q+1}|w_{2}|^{\tfrac{q+1}{q}}dv_g -\dfrac{1}{2}\int_{\M}\left( w_{1}\mathit{T_{1}}w_{2}+w_{2}\mathit{T_{2}}w_{1}\right) dv_g,
	\end{aligned}
\end{equation*}
where $w=(w_{1},w_{2}) \in X^{*}.$ $I_{\eps}(w)$ is $C^{1}$ with derivative
\be 
I^{\prime}_\eps(w_1,w_2)(h,k)=\int_{\M}|w_1|^{\f{1-p}{p}}w_1h+\int_{\M}|w_2|^{\f{1-q}{q}}w_2k-\int_{\M}<T_\eps(w_1,w_2),(h,k)>, \quad \forall \; (h, k) \in X^{*}.
\nee We have the following comparison lemma on $I_{\eps}$ and $J_{\eps}$.

\begin{lemma}\label{comp.dual.soln}
	Let $(w_1,w_2)\in X^*$ be a critical point of $I_\eps$ and $(u_\eps,v_\eps):=(T_1 w_2,T_2 w_1)$ be the corresponding dual function. Then 
	\begin{enumerate}[label= (\roman*)]
		\item $(u_\eps,v_\eps)$ solves \ef{eq0} and $w_1=|u_\eps|^{p-1}u_\eps$, $w_2=|v_\eps|^{q-1}v_\eps$,
		\item $I_\eps(w_1,w_2)=J_\eps(u_\eps,v_\eps)$.
		\item For any solution $(u_\eps,v_\eps)$ of \ef{eq0} the corresponding dual $(w_1,w_2)$ defined in (i) is a critical point of the dual functional $I_\eps$. 
	\end{enumerate} 
\end{lemma}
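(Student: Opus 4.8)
The plan is to unwind the definitions carefully, exploiting that $T_1$ and $T_2$ are the solution operators $i_q\circ T_{1,q}$ and $i_p\circ T_{1,p}$ for the decoupled equations $-\eps^2\Dm\cdot+\cdot$ from \Cref{LP-estimate}. For part (i), I start from the Euler--Lagrange equation $I_\eps'(w_1,w_2)=0$, which, since $(h,k)\in X^*$ is arbitrary and $T_\eps$ is self-adjoint, is equivalent to the pointwise identities
\begin{equation*}
	|w_1|^{\f{1-p}{p}}w_1 = T_1 w_2 = u_\eps, \qquad |w_2|^{\f{1-q}{q}}w_2 = T_2 w_1 = v_\eps \quad \text{a.e. in }\M.
\end{equation*}
Inverting the maps $s\mapsto|s|^{(1-p)/p}s$ and $s\mapsto|s|^{(1-q)/q}s$ (which are bijections of $\R$ with inverses $t\mapsto|t|^{p-1}t$ and $t\mapsto|t|^{q-1}t$) yields $w_1=|u_\eps|^{p-1}u_\eps$ and $w_2=|v_\eps|^{q-1}v_\eps$. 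Then, by the very definition of $T_1=i_q\circ T_{1,q}$ and $T_2=i_p\circ T_{1,p}$, the pair $(u_\eps,v_\eps)=(T_1w_2,T_2w_1)$ satisfies $-\eps^2\Dm u_\eps+u_\eps=w_2=|v_\eps|^{q-1}v_\eps$ and $-\eps^2\Dm v_\eps+v_\eps=w_1=|u_\eps|^{p-1}u_\eps$ in the strong ($W^{2,\beta}\times W^{2,\al}$) sense; positivity of $u_\eps,v_\eps$ follows from the maximum principle applied to each scalar equation together with $w_1,w_2\not\equiv0$ (a critical point at a nonzero mountain-pass level is nontrivial). I would also note that $(u_\eps,v_\eps)\in\E$ because $w_1\in L^\al(\M)$ forces $v_\eps=T_{1,p}w_1\in W^{2,\al}$, and symmetrically for $u_\eps$, matching the regularity of a strong solution.

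For part (ii), I substitute $w_1=|u_\eps|^{p-1}u_\eps$, $w_2=|v_\eps|^{q-1}v_\eps$ into $I_\eps$. The first integrand becomes $\tfrac{p}{p+1}|u_\eps|^{p+1}+\tfrac{q}{q+1}|v_\eps|^{q+1}$. For the quadratic term, using $T_1w_2=u_\eps$, $T_2w_1=v_\eps$, one gets
\begin{equation*}
	\tfrac12\int_\M\big(w_1T_1w_2+w_2T_2w_1\big)\,dv_g=\tfrac12\int_\M\big(|u_\eps|^{p-1}u_\eps\,u_\eps+|v_\eps|^{q-1}v_\eps\,v_\eps\big)\,dv_g=\tfrac12\int_\M\big(|u_\eps|^{p+1}+|v_\eps|^{q+1}\big)\,dv_g.
\end{equation*}
On the other hand, testing the strong equations for $(u_\eps,v_\eps)$ against $v_\eps$ and $u_\eps$ respectively and integrating by parts gives $\int_\M(\eps^2\nabla_g u_\eps\cdot\nabla_g v_\eps+u_\eps v_\eps)\,dv_g=\int_\M|u_\eps|^{p+1}\,dv_g=\int_\M|v_\eps|^{q+1}\,dv_g$, so the cross term in $J_\eps$ equals $\tfrac12\int_\M(|u_\eps|^{p+1}+|v_\eps|^{q+1})$. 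Combining, $I_\eps(w_1,w_2)=\tfrac{p}{p+1}\int|u_\eps|^{p+1}+\tfrac{q}{q+1}\int|v_\eps|^{q+1}-\tfrac12\int(|u_\eps|^{p+1}+|v_\eps|^{q+1})=\int_\M(\eps^2\nabla_g u_\eps\cdot\nabla_g v_\eps+u_\eps v_\eps)\,dv_g-\int_\M\big(\tfrac{|u_\eps|^{p+1}}{p+1}+\tfrac{|v_\eps|^{q+1}}{q+1}\big)\,dv_g=J_\eps(u_\eps,v_\eps)$, recalling $F(u)=u^{p+1}/(p+1)$, $G(v)=v^{q+1}/(q+1)$.

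Part (iii) is the reverse direction and is essentially bookkeeping: given a strong solution $(u_\eps,v_\eps)$ of \ref{eq0}, set $w_1=|u_\eps|^{p-1}u_\eps\in L^\al(\M)$ and $w_2=|v_\eps|^{q-1}v_\eps\in L^\beta(\M)$ (the integrability follows from $u_\eps\in L^{p+1}$, $v_\eps\in L^{q+1}$ via the embeddings). By uniqueness in \Cref{LP-estimate}, the equations $-\eps^2\Dm u_\eps+u_\eps=|v_\eps|^{q-1}v_\eps=w_2$ and $-\eps^2\Dm v_\eps+v_\eps=w_1$ mean exactly $u_\eps=T_{1,q}w_2$, $v_\eps=T_{1,p}w_1$, hence $T_1w_2=u_\eps=|w_1|^{(1-p)/p}w_1$ and $T_2w_1=v_\eps=|w_2|^{(1-q)/q}w_2$; reading these as $I_\eps'(w_1,w_2)(h,k)=0$ for all $(h,k)\in X^*$ shows $(w_1,w_2)$ is a critical point. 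I do not anticipate a serious obstacle here; the only point requiring a little care is the equivalence between the weak (distributional/variational) form of $I_\eps'=0$ and the pointwise identities, which uses density of $X^*$-test functions and the fact that $T_\eps$ maps into $X\subset (X^*)^*$ with the pairing being the $L^2$ pairing where it makes sense — this is routine but should be stated explicitly, and is the place where one must be slightly careful that all quantities lie in the right $L^r$ spaces so the integrals converge.
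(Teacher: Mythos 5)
Your proof takes essentially the same route as the paper's: read off the Euler--Lagrange equation of $I_\eps$ to get the pointwise identities \eqref{dual.trans}, invert the power maps to recover $w_1,w_2$ from $u_\eps,v_\eps$, unwind the definition of $T_1,T_2$ to obtain the PDE system, and match the two functionals by substituting. Your version of (ii) is actually a little more explicit than the paper's, since you record the identity $\tfrac12\int_\M\langle T_\eps w,w\rangle\,dv_g=\tfrac12\int_\M(|u_\eps|^{p+1}+|v_\eps|^{q+1})\,dv_g$ and the integration-by-parts computation of the cross term in $J_\eps$, both of which the paper compresses into a single ``$=J_\eps(u_\eps,v_\eps)$''. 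Part (iii) is likewise the same bookkeeping as in the paper, with the integrability check made explicit. These are all fine.

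The one place where your argument does not actually close is the positivity claim inside (i). You assert that $u_\eps,v_\eps>0$ ``follows from the maximum principle applied to each scalar equation together with $w_1,w_2\not\equiv 0$''. But to apply the maximum principle to $-\eps^2\Dm u_\eps+u_\eps=|v_\eps|^{q-1}v_\eps$ you need the right-hand side to be nonnegative, i.e.\ you need to already know $v_\eps\geq 0$; and symmetrically for the $v$-equation you need $u_\eps\geq 0$. So as written the argument is circular. For a general mountain-pass critical point of $I_\eps$ this is a genuine issue: positivity is not automatic from the Euler--Lagrange equation alone, and one needs a separate device (e.g.\ observing that replacing $(w_1,w_2)$ by $(|w_1|,|w_2|)$ does not increase $I_\eps$, or running the min-max over nonnegative paths, or invoking a result such as Lemma 3.1 of \cite{MR2287528} as the paper does later in Lemma~\ref{PS-soln}). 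Note the paper's own proof of (i) stops at the system \eqref{mot.sol} without positivity and only adds a side remark about \emph{positive} critical points, so your instinct that something needs to be said was right; the fix you proposed, however, does not work as stated.
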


\begin{proof}
	\textbf{Proof of (i):} 
	First, we note that if $ w=(w_1,w_2) \in X^*$ is a critical point of $I_{\eps}$, then
	\be 
	\int_{\M}|w_1|^{\f{1-p}{p}}w_1h+\int_{\M}|w_2|^{\f{1-q}{q}}w_2k=\int_{\M}h T_1 w_2+ k T_2 w_1 . \quad \forall \; (h,k)\in X^*.
	\nee Hence,
	\be \label{dual.trans}
	|w_1|^{\f{1-p}{p}}w_1=  T_1 w_2=u_{\eps},\qquad |w_2|^{\f{1-q}{q}}w_2= T_2 w_1 =v_{\eps}.
	\ee
	This gives $w_{1}=|u_\eps|^{p-1}u_\eps$ and $w_{2}=|v_\eps|^{q-1}v_\eps$. Also,
	
	\begin{equation*}
		(u_\eps,v_\eps)=(T_1 w_2,T_2 w_1) \implies \begin{cases}
			w_{1}=-\eps^2\De v_\eps + v_\eps,\\ w_{2}=-\eps^2\De u_\eps +u_\eps.
		\end{cases}
	\end{equation*}
	Hence $(u_\eps,v_\eps)$ satisfies
	\be \label{mot.sol}
	-\eps^2\De u +u=|v|^{q-1}v,  \qquad -\eps^2\De v + v= |u|^{p-1}u. 
	\ee
	So, for every positive critical points of $J_\eps$, the corresponding dual pair, given in \ef{dual.trans} is positive and solves the equation \eqref{eq0}.  This proves $(i).$ \\
	\textbf{Proof of (ii):} 
	We note that for any critical point $(w_1,w_2)$ of $I_\eps$,
	\begin{align*}
		I_\eps(w_1,w_2)&=\f{p}{p+1}\int_{\M}|w_1|^{\f{p+1}{p}}+\f{q}{q+1}\int_{\M}|w_2|^{\f{q+1}{q}}-\f{1}{2}\int_{\M}<T_\eps(w_1,w_2),(w_1,w_2)>\\
		&=\left( \f{p}{p+1}-\f{1}{2}\right) \int_{\M}|u_{\eps}|^{p+1}dv_g+\left( \f{q}{q+1}-\f{1}{2}\right) \int_{\M}|v_{\eps}|^{q+1}dv_g  \\
		&=\left( \f{1}{2}-\f{1}{p+1}\right) \int_{\M}|u_{\eps}|^{p+1}dv_g+\left( \f{1}{2}-\f{1}{q+1}\right) \int_{\M}|v_{\eps}|^{q+1}dv_g\\
		&=J_\eps(u_{\eps}, v_{\eps}). 	
	\end{align*}
	
	\textbf{Proof of (iii):}  This proof is quite straight forward. For any solution $ (u_\eps,v_\eps)\in\E$ of \eqref{eq0}, define $w_1=|u_\eps|^{p-1}u_\eps$ and $w_2=|v_\eps|^{q-1}v_\eps$. Then $(w_1,w_2)\in X^*$ and we can show for any $\phi,\psi\in C_c^\infty(\M)$, $\; I^{\prime}_\eps(w_1,w_2)(\phi,\psi)=0$.
\end{proof}  

\subsection*{Mountain Pass Theorem and the Existence:}
Arguments similar to those in Lemma 1 of \cite{MR1944783} and Lemma 2.2 of \cite{MR1978382}, establish the following Mountain Pass geometry of the dual functional $I_{\eps}$.

\begin{lemma}\label{MPG}
	The functional $I_{\eps}$ has the Mountain Pass Geometry, namely
	\begin{enumerate}[label=(\roman*)]
		\item There exist $\rho, a>0$ such that $I_{\eps}(w)\geq a$, if  $\|w\|_{X^{*}}=\rho$,
		\item There exists $e \in X^{*}$ with $\|e\|_{X^{*}}>\rho$ such that $I_{\eps}(e)\leq 0$.
	\end{enumerate}
\end{lemma}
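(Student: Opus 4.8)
The plan is to read off both geometric properties directly from the shape of $I_\eps$, using the key structural fact that $\alpha=\tfrac{p+1}{p}$ and $\beta=\tfrac{q+1}{q}$ both lie in $(1,2)$. In the dual formulation this makes the ``nonlinear'' part $\int_\M\bigl(\tfrac{p}{p+1}|w_1|^{\alpha}+\tfrac{q}{q+1}|w_2|^{\beta}\bigr)\,dv_g$ \emph{subquadratic}, so that it is this term --- rather than a quadratic one --- which keeps $I_\eps$ positive on a small sphere, whereas the bilinear term $\tfrac12\langle T_\eps w,w\rangle$ is exactly quadratic and drives $I_\eps$ to $-\infty$ along any ray on which it is positive. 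This is the only place where the ``reversed'' nature of the dual mountain pass geometry needs to be taken into account.

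For part (i) I would first bound the subquadratic part from below by $c_0\bigl(\|w_1\|_{L^\alpha}^{\alpha}+\|w_2\|_{L^\beta}^{\beta}\bigr)$ with $c_0:=\min\{\tfrac{p}{p+1},\tfrac{q}{q+1}\}>\tfrac12$. Next I would bound the bilinear term: since $T_1\colon L^\beta(\M)\to L^{p+1}(\M)$ and $T_2\colon L^\alpha(\M)\to L^{q+1}(\M)$ are bounded and $L^{p+1}=(L^\alpha)^*$, $L^{q+1}=(L^\beta)^*$, Hölder's inequality gives $|\langle T_\eps w,w\rangle|\le C\,\|w_1\|_{L^\alpha}\|w_2\|_{L^\beta}\le \tfrac{C}{2}\|w\|_{X^*}^2$. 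Writing $s=\|w_1\|_{L^\alpha}$, $t=\|w_2\|_{L^\beta}$ and restricting to $\|w\|_{X^*}=\rho<1$, at least one of $s,t$ exceeds $\rho/\sqrt2$, whence $s^\alpha+t^\beta\ge \tfrac12\rho^{\,m}$ with $m:=\max\{\alpha,\beta\}<2$. Therefore $I_\eps(w)\ge \tfrac{c_0}{2}\rho^{\,m}-\tfrac{C}{4}\rho^{2}$, and since $2-m>0$ this is $\ge a:=\tfrac{c_0}{4}\rho^{\,m}>0$ as soon as $\rho$ is taken small enough that $\rho^{\,2-m}\le c_0/C$.

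For part (ii) I would test $I_\eps$ along the ray $e=t\,w^0$, $t>0$, for a fixed $w^0=(\phi,\phi)$ with $\phi$ any nonnegative, nontrivial element of $L^\alpha(\M)\cap L^\beta(\M)$ --- e.g.\ $\phi\equiv1$, which lies in both spaces because $\M$ is compact. With $v:=T_{1,q}\phi=(-\eps^2\Dm+\mathit{id})^{-1}\phi$, integrating by parts yields $\int_\M \phi\,T_1\phi\,dv_g=\int_\M(-\eps^2\Dm v+v)v\,dv_g=\eps^2\int_\M|\nabla_{g} v|^2\,dv_g+\int_\M v^2\,dv_g>0$, and symmetrically for the $T_2$ term; hence $\langle T_\eps w^0,w^0\rangle>0$. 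Then
\[
I_\eps(t\,w^0)=t^{\alpha}\tfrac{p}{p+1}\|\phi\|_{L^\alpha}^{\alpha}+t^{\beta}\tfrac{q}{q+1}\|\phi\|_{L^\beta}^{\beta}-\tfrac{t^2}{2}\langle T_\eps w^0,w^0\rangle\longrightarrow-\infty
\]
as $t\to\infty$, again because $\alpha,\beta<2$; choosing $t_0$ large with $I_\eps(t_0w^0)\le0$ and $t_0\|w^0\|_{X^*}>\rho$ and setting $e=t_0w^0$ completes the argument.

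The only genuinely non-automatic point is the exponent bookkeeping in part (i): one must make sure that the subquadratic contribution $\rho^{\max\{\alpha,\beta\}}$ really dominates the quadratic contribution $\rho^2$ for small $\rho$, and that the mixed quantity $\|w_1\|_{L^\alpha}\|w_2\|_{L^\beta}$ produced by $T_\eps$ is absorbed into $\|w\|_{X^*}^2$; both are immediate granted $\alpha,\beta\in(1,2)$ together with the boundedness (indeed compactness, from the embeddings $i_p,i_q$) of $T_1$ and $T_2$ recorded above. The remaining ingredients are Hölder's inequality and the positivity and self-adjointness of $(-\eps^2\Dm+\mathit{id})^{-1}$, which are already available.
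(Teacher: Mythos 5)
Your proof is correct, and it is essentially the same argument the paper delegates to its references (Lemma~1 of Alves--Carri\~ao--Miyagaki and Lemma~2.2 of \'Avila--Yang): the subquadratic exponents $\alpha,\beta\in(1,2)$ make the $L^\alpha\times L^\beta$ term dominate near the origin, while the quadratic bilinear form $\langle T_\eps w,w\rangle$, shown positive along a nonnegative ray, sends $I_\eps$ to $-\infty$ at infinity. The bookkeeping in (i) — H\"older duality to absorb $\langle T_\eps w,w\rangle$ into $\|w\|_{X^*}^2$, and $\rho^{\max\{\alpha,\beta\}}$ dominating $\rho^2$ for small $\rho$ — and the integration by parts giving positivity in (ii) are both sound.
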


Also, we have the following:
\begin{lemma}\label{PS}
	The functional $I_{\eps}$ satisfies the Palais Smale condition.
\end{lemma}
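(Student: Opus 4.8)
The plan is to run the standard two-step scheme for the $(PS)$ condition in a dual variational framework: first show that every $(PS)$ sequence is bounded in $X^*$, and then promote weak convergence to strong convergence by exploiting the compactness of $T_1$ and $T_2$. Fix a sequence $(w^n)=(w_1^n,w_2^n)\subset X^*$ with $I_\eps(w^n)\to c$ and $I'_\eps(w^n)\to 0$ in $(X^*)^*\cong L^{p+1}(\M)\times L^{q+1}(\M)$.

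For boundedness I would evaluate $I'_\eps(w^n)$ at $w^n$ and form the usual combination $I_\eps(w^n)-\tfrac12 I'_\eps(w^n)(w^n)$; the terms containing $T_\eps$ cancel, leaving
\[
I_\eps(w^n)-\tfrac12 I'_\eps(w^n)(w^n)=\Big(\tfrac{p}{p+1}-\tfrac12\Big)\|w_1^n\|_{L^{\al}(\M)}^{\al}+\Big(\tfrac{q}{q+1}-\tfrac12\Big)\|w_2^n\|_{L^{\beta}(\M)}^{\beta}.
\]
Since $p,q>1$ both coefficients are strictly positive, while the left-hand side is at most $c+1+o(1)\|w^n\|_{X^*}$ for $n$ large. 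Because $\al,\beta\in(1,2)$, an unbounded subsequence would force the right-hand side to grow at least like $\|w^n\|_{X^*}^{\min\{\al,\beta\}}$ with $\min\{\al,\beta\}>1$, contradicting the linear upper bound; hence $(w^n)$ is bounded in $X^*$. The only slightly delicate point is the bookkeeping between the norm on $X^*$ (built from the squares of the $L^{\al}$- and $L^{\beta}$-norms) and the $\al$-th and $\beta$-th powers produced by the identity above.

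Next, extract a subsequence with $w_1^n\rightharpoonup w_1$ in $L^{\al}(\M)$ and $w_2^n\rightharpoonup w_2$ in $L^{\beta}(\M)$. By compactness of $T_1\colon L^{\beta}(\M)\to L^{p+1}(\M)$ and $T_2\colon L^{\al}(\M)\to L^{q+1}(\M)$ we obtain $T_1 w_2^n\to T_1 w_2$ in $L^{p+1}(\M)$ and $T_2 w_1^n\to T_2 w_1$ in $L^{q+1}(\M)$. The heart of the argument is then to read off the Euler--Lagrange equation: $I'_\eps(w^n)\to 0$ in $(X^*)^*$ says precisely that $|w_1^n|^{\frac{1-p}{p}}w_1^n-T_1 w_2^n\to 0$ in $L^{p+1}(\M)$ and $|w_2^n|^{\frac{1-q}{q}}w_2^n-T_2 w_1^n\to 0$ in $L^{q+1}(\M)$, so combining with the previous step, $|w_1^n|^{\frac{1-p}{p}}w_1^n\to T_1 w_2$ and $|w_2^n|^{\frac{1-q}{q}}w_2^n\to T_2 w_1$ in the respective spaces. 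Since the Nemytskii operator $\xi\mapsto|\xi|^{p-1}\xi$ is continuous from $L^{p+1}(\M)$ into $L^{\al}(\M)$ and inverts $s\mapsto|s|^{\frac{1-p}{p}}s$ pointwise (and similarly with $q$ and $\beta$), applying it yields $w_1^n\to|T_1 w_2|^{p-1}T_1 w_2$ strongly in $L^{\al}(\M)$ and $w_2^n\to|T_2 w_1|^{q-1}T_2 w_1$ strongly in $L^{\beta}(\M)$. Thus $w^n$ converges strongly in $X^*$ (necessarily to $(w_1,w_2)$), which is the $(PS)$ condition. I do not anticipate a genuine obstacle: the mechanism that makes the scheme work is that the dual Euler--Lagrange equation can be solved pointwise to express each component $w_i^n$ as a continuous function of the compact images $T_i w_j^n$, forcing strong rather than merely weak convergence; the two minor things to check carefully are the exponent bookkeeping in the boundedness step and the continuity of these Nemytskii maps between the relevant Lebesgue spaces on the finite-measure manifold $\M$.
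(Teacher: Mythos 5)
Your proposal is correct and follows essentially the same route as the paper: the boundedness step uses the combination $I_\eps(w^n)-\tfrac12 I'_\eps(w^n)(w^n)$ to cancel the bilinear term and exploit that the exponents $\al,\beta$ exceed $1$, and the strong convergence step uses compactness of $T_1,T_2$ together with the continuity of the Nemytskii maps $\xi\mapsto|\xi|^{p-1}\xi$ and $\xi\mapsto|\xi|^{q-1}\xi$ to invert the dual Euler--Lagrange relations pointwise. The only cosmetic difference is that the paper introduces auxiliary names $(u_n,v_n)=(T_1w_n^2,T_2w_n^1)$ for the compact images, whereas you work directly with $T_iw_j^n$; your explicit remark about the bookkeeping between the squared $X^*$-norm and the $\al$-th and $\beta$-th powers is a point the paper glosses over (it writes $o(1)$ where $o(\|w^n\|_{X^*})$ is what actually appears), so your version is, if anything, slightly more careful on that point.
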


\begin{proof}
	Let $w_{n}=(w_{n, 1}, w_{n, 2}) \in X^{*}$ be a Palais Smale sequence, i.e.,
	\begin{equation*}
		|I_{\eps}(w_{n, 1}, w_{n, 2})| \leq C \quad \text{ and} \quad I^{\prime}_{\eps}(w_{n, 1}, w_{n, 2}) \to 0.
	\end{equation*}
	\textbf{Claim 1:} $(w_{n, 1}, w_{n, 2})$ is bounded in $X^{*}$.
	
	Since $|I_{\eps}(w_{n, 1}, w_{n, 2})| \leq C$, we can write
	\begin{equation*}
		\begin{aligned}
			\int_{\M}\dfrac{p}{p+1}|w_{n, 1}|^{\tfrac{p+1}{p}}+\dfrac{q}{q+1}|w_{n, 2}|^{\tfrac{q+1}{q}}dv_g &\leq\dfrac{1}{2}\int_{\M}\left( w_{n, 1}\mathit{T_{1}}w_{n, 2}+w_{n, 2}\mathit{T_{2}}w_{n, 1}\right) dv_g+C\\
			&\leq\dfrac{1}{2}\int_{\M}\left( |w_{n, 1}|^{\tfrac{p+1}{p}}+|w_{n, 2}|^{\tfrac{q+1}{q}}\right) dv_g+C+o(1).
		\end{aligned}
	\end{equation*}
	This implies
	\begin{equation*}
		\begin{aligned}
			\(\dfrac{p}{p+1}-\dfrac{1}{2}\)\int_{\M}|w_{n, 1}|^{\tfrac{p+1}{p}}dv_g+\(\dfrac{q}{q+1}-\dfrac{1}{2}\)\int_{\M}|w_{n, 2}|^{\tfrac{q+1}{q}}dv_g
			&\leq C +o(1).
		\end{aligned}
	\end{equation*}
	Thus $(w_{n, 1}, w_{n, 2})$ is bounded in $X^{*}$. Since  $X^{*}$ is reflexive, we may assume up to a subsequence that $(w_{n, 1}, w_{n, 2}) \rightharpoonup (w_{1}, w_{2})$ weakly in $X^{*}$. \\
	\textbf{Claim 2:} $(w_{n, 1}, w_{n, 2})$ converges strongly in $X^{*}$.
	
	Let $(u_{n}, v_{n}):=(T_{1}w_{n, 2}, T_{2}w_{n, 1})$, and $(u, v):=(T_{1}w_{2}, T_{2}w_{1})$. 
	Since $T_{1}: L^{\frac{q+1}{q}}(\M) \to L^{p+1}(\M)$ and $T_{2}: L^{\frac{p+1}{p}}(\M) \to L^{q+1}(\M)$ are compact operators, we have (up to a subsequence)
	\begin{equation*}
		(u_{n}, v_{n})=(T_{1}w_{n, 2}, T_{2}w_{n, 1})\to (T_{1}w_{2}, T_{2}w_{1})=(u,v) \text{ strongly in } L^{p+1}(\M) \times L^{q+1}(\M).
	\end{equation*}
	We now recall the following standard inequality: for all $a, b \in \rN$ and $p>1$,
	\begin{equation}
		\left||a|^{p-1}a-|b|^{p-1}b\right| \leq
		C(p)|a-b|(|a|^{p-1}+|b|^{p-1}),
	\end{equation}
	where $C(p)$ is a constant depending on $p$ (proof can be found in \cite{AB}). Using this,
	\begin{equation*}
		\begin{aligned}
			\int_{\M}\left||u_{n}|^{p-1}u_{n}-|u|^{p-1}u\right|^{\frac{p+1}{p}}dv_g
			&\leq c\int_{\M}|u_{n}-u|^{\frac{p+1}{p}}||u_{n}|^{p-1}+|u|^{p-1}|^\frac{p+1}{p} dv_g\\
			&\leq c(\int_{\M}|u_{n}-u|^{p+1})^{\tfrac{1}{p}}(\int_{\M}\left||u_{n}|^{p-1}+|u|^{p-1}\right|^{\tfrac{p+1}{p-1}})^{\tfrac{p-1}{p}}\\
			&\leq c(\int_{\M}|u_{n}-u|^{p+1})^{\tfrac{1}{p}}(\int_{\M}|u_{n}|^{p+1}+\int_{\M}|u|^{p+1})^{\tfrac{p-1}{p}}\\
			& \to 0 \quad \text{ strongly}.	
		\end{aligned}
	\end{equation*}
	Thus $|u_{n}|^{p-1}u_{n} \to |u|^{p-1}u$ and $|v_{n}|^{q-1}v_{n} \to |v|^{q-1}v$ in $L^{\frac{p+1}{p}}(\M)$ and $L^{\frac{q+1}{q}}(\M)$ respectively. Also, $I^{\prime}_{\eps}(w_{n, 1}, w_{n, 2})\to 0$ implies
	\begin{equation*}
		u_{n}=|w_{n, 1}|^{\tfrac{1}{p}-1}w_{n, 1}+o(1) \;\text{ in } L^{p+1}(\M), \quad v_{n}=|w_{n, 2}|^{\tfrac{1}{q}-1}w_{n, 2}+o(1)  \;\text{ in } L^{q+1}(\M).
	\end{equation*}
	Now using the relation $(u_{n}, v_{n}):=(T_{1}w_{n, 2}, T_{2}w_{n, 1})$, we conclude that
	\begin{equation*}
		\begin{cases}
			\begin{aligned}
				-\eps^{2}\Delta u_{n}+u_{n}&=w_{n, 2}=|v_{n}|^{q-1}v_{n}+o(1) \;\text{ in } L^{\frac{p+1}{p}}(\M),\\
				-\eps^{2}\Delta v_{n}+v_{n}&=w_{n, 1}=|u_{n}|^{p-1}u_{n}+o(1) \;\text{ in } L^{\frac{q+1}{q}}(\M). 
			\end{aligned}
		\end{cases}
	\end{equation*}
	This immediately gives $w_{1}=|u|^{p-1}u$, $w_{2}=|v|^{q-1}v$ and
	\begin{equation*}
		\|w_{n}-w\|^{2}_{X^{*}}=\||u_{n}|^{p-1}u_{n}-|u|^{p-1}u\|^{2}_{L^{\frac{p+1}{p}}(\M)}+\||v_{n}|^{q-1}v_{n}-|v|^{q-1}v\|^{2}_{L^{\frac{q+1}{q}}(\M)} +o(1) \to 0 \quad \text{ strongly}.
	\end{equation*}
\end{proof}

Define
\begin{equation}
	c_{\eps} :=\inf_{\gamma \in \Gamma}  \max_{0 \leq t \leq 1}I_{\eps}(\gamma(t))\label{lstenergy},
\end{equation}
where
\begin{equation*}
	\Gamma := \{\gamma \in C([0,1];X^{*})|I_{\eps}(\gamma(0))=0, I_{\eps}(\gamma(1))<0\}.
\end{equation*}

\begin{prop}
	The functional $I_{\eps}$ has a critical point $w_{\eps}$ such that $I_{\eps}(w_{\eps})=c_{\eps}$.
\end{prop}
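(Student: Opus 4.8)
The plan is to apply the Mountain Pass Theorem (in its standard form without the Palais–Smale assumption built in) to the functional $I_{\eps}$ on the Banach space $X^{*}$. All the hypotheses have in fact already been assembled in the preceding lemmas, so the proof amounts to checking that they combine correctly. First I would note that $I_{\eps} \in C^{1}(X^{*},\R)$, as recorded after the definition of $I_{\eps}$, and that $I_{\eps}(0)=0$. Then \Cref{MPG} supplies the geometric hypotheses: part (i) gives constants $\rho, a>0$ with $I_{\eps}(w)\ge a$ whenever $\|w\|_{X^{*}}=\rho$, and part (ii) gives a point $e\in X^{*}$ with $\|e\|_{X^{*}}>\rho$ and $I_{\eps}(e)\le 0$. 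In particular the class $\Gamma$ used in the definition of $c_{\eps}$ in \eqref{lstenergy} is nonempty (the straight-line path $t\mapsto te$ belongs to it, since $I_{\eps}(0)=0$ and $I_{\eps}(e)\le 0$), and every path $\gamma\in\Gamma$ must cross the sphere $\{\|w\|_{X^{*}}=\rho\}$, whence $\max_{t\in[0,1]}I_{\eps}(\gamma(t))\ge a$ and therefore $c_{\eps}\ge a>0$.

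Next I would invoke \Cref{PS}, which says $I_{\eps}$ satisfies the Palais–Smale condition (at every level, in particular at the level $c_{\eps}$). With $I_{\eps}\in C^{1}$, the mountain pass geometry from \Cref{MPG}, and the (PS) condition from \Cref{PS}, the Ambrosetti–Rabinowitz Mountain Pass Theorem applies and yields a critical point $w_{\eps}\in X^{*}$ of $I_{\eps}$ with $I_{\eps}(w_{\eps})=c_{\eps}$. Since $c_{\eps}\ge a>0$, this critical point is nontrivial. This is really the whole argument: the substantive work was done in establishing \Cref{MPG} and \Cref{PS}, and the proposition is their immediate corollary.

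The only point requiring a little care — and the step I would flag as the main (mild) obstacle — is the verification that $\Gamma\neq\emptyset$ and that the mountain pass value $c_{\eps}$ is finite; both follow from using the explicit competitor path $t\mapsto te$, along which $I_{\eps}$ is continuous and hence bounded on the compact interval $[0,1]$, so $c_{\eps}\le \max_{t\in[0,1]}I_{\eps}(te)<\infty$. Combined with the lower bound $c_{\eps}\ge a$, this pins $c_{\eps}$ to a genuine positive critical level. One should also remark that, although the Mountain Pass Theorem is usually stated on a Hilbert or general Banach space, $X^{*}=L^{\alpha}(\M)\times L^{\beta}(\M)$ is a reflexive Banach space with $\alpha,\beta\in(1,\infty)$, so the deformation-lemma machinery underlying the theorem is available; no smoothness of the norm is needed beyond what the standard Banach-space version of the theorem (e.g. via Ekeland's variational principle) requires. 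I would conclude by observing, for later use, that by \Cref{comp.dual.soln}(i)–(ii) the associated pair $(u_{\eps},v_{\eps}):=(T_{1}w_{\eps,2},T_{2}w_{\eps,1})$ then solves \eqref{eq0} with $J_{\eps}(u_{\eps},v_{\eps})=c_{\eps}$, so $c_{\eps}$ is precisely the least energy (ground state) level for the system.
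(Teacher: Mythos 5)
Your proof is correct and follows exactly the same route as the paper: invoke Lemma~\ref{MPG} for the mountain pass geometry, Lemma~\ref{PS} for the Palais--Smale condition, and apply the Mountain Pass Theorem to produce a critical point at level $c_{\eps}$. The paper states this in one sentence; your additional remarks on the nonemptiness of $\Gamma$, the bounds on $c_{\eps}$, and the Banach-space setting are all correct elaborations of the same argument.
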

\begin{proof}
	Since the functional satisfies \cref{MPG} and \cref{PS}, by Mountain Pass Lemma we obtain a critical point $w_{\eps}$ such that $I_{\eps}(w_{\eps})=c_{\eps}$.
\end{proof}

We now study the entire system corresponding to our equation.

\subsection*{Corresponding Entire Problem:}

Consider the system
\begin{equation}\label{es}
	\begin{cases}
		\begin{aligned}
			-\Delta u +u &=|v|^{q-1}v,\\
			-\Delta v +v &=|u|^{p-1}u &&\quad \text{ in } \mathbb{R}^{N},\\
			u,v&>0 &&\quad \text{ in } \mathbb{R}^{N}.
		\end{aligned}
	\end{cases}
\end{equation}

In \cite{MR1755067, MR1785681}, the authors have proved the existence of least energy solutions for this system, as well as the symmetry and decay properties of weak solutions. In this works, the least energy is defined with respect to an energy functional that incorporates canonical isomorphism between the fractional-order Sobolev space and $L^2$. In contrast, our work reformulates the problem using dual variational approach and this structural difference leads to distinct analytical frameworks. Here, we address separately the existence of least energy solutions in our setting, along with an analysis of their symmetry and decay properties. First we recall the following:
\begin{lemma}[Lemma A, \cite{MR2183832}]\label{Lemma_A}
	Let $s\in(1,\I)$. Then for every $h\in L^s(\R^N)$ the problem
	\be	- \De u +  u= h\quad \text{  in  } \R^N, \nee
	has a unique solution $u\in W^{2,s}(\R^N)$. Furthermore, there exists a constant $K>0$ such that 
	\be \norm{u}_{W^{2,s}(\R^N)}\leq K(s)\norm{h}_{L^s(\R^N)}.  \ee 
\end{lemma}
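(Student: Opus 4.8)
The plan is to recognize this as the standard $L^s$ (Calder\'on--Zygmund) theory for the Bessel-type operator $-\De+\mathit{id}$ on $\rN$ and to prove it by Fourier analysis together with the Mikhlin--H\"ormander multiplier theorem. First I would dispose of uniqueness: if $u\in W^{2,s}(\rN)$ solves $-\De u+u=0$, then, regarding $u$ as a tempered distribution and taking the Fourier transform, one gets $(1+|\xi|^2)\widehat u=0$ in $\mathcal S'(\rN)$; since $1+|\xi|^2$ never vanishes this forces $\widehat u=0$, hence $u\equiv 0$, and by linearity this yields uniqueness for every right-hand side $h$.

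For existence and the a priori estimate I would argue by density. For $h\in L^s(\rN)\cap L^2(\rN)$ set $u:=\mathcal F^{-1}\big[(1+|\xi|^2)^{-1}\widehat h\,\big]$, equivalently $u=G_2*h$ with $G_2$ the order-$2$ Bessel potential kernel. Then $u\in L^2(\rN)$ solves $-\De u+u=h$ distributionally, and one has $u=\mathcal F^{-1}[m_0\widehat h]$ and $\partial_{ij}u=\mathcal F^{-1}[m_{ij}\widehat h]$ with $m_0(\xi)=(1+|\xi|^2)^{-1}$ and $m_{ij}(\xi)=-\xi_i\xi_j(1+|\xi|^2)^{-1}$. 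Each of $m_0$, $\partial_i m_0$, $m_{ij}$ is smooth on $\rN$ and satisfies the Mikhlin--H\"ormander bounds $|\xi|^{|\gamma|}|\partial^\gamma m(\xi)|\le C_N$ for $|\gamma|\le\lfloor N/2\rfloor+1$, so the associated Fourier multiplier operators are bounded on $L^s(\rN)$ for every $s\in(1,\I)$; summing the contributions of $u$ and all $\partial_{ij}u$ gives $\norm{u}_{W^{2,s}(\rN)}\le K(s)\norm{h}_{L^s(\rN)}$ with $K$ depending only on $s$ and $N$ (in particular not on $h$).

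Then I would pass to a general $h\in L^s(\rN)$: approximate by $h_n\in L^s(\rN)\cap L^2(\rN)$ with $h_n\to h$ in $L^s$; the linear estimate applied to $h_n-h_m$ shows the corresponding solutions $u_n$ are Cauchy in $W^{2,s}(\rN)$, so they converge to some $u\in W^{2,s}(\rN)$, which solves $-\De u+u=h$ and inherits the bound by passing to the limit, and the uniqueness established in the first step pins it down.

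The hard part — to the limited extent there is one — is the uniform verification of the Mikhlin condition for the symbols $m_0,m_{ij}$ and the invocation of the multiplier theorem; equivalently one can simply quote the classical mapping property $(-\De+\mathit{id})^{-1}:L^s(\rN)\to W^{2,s}(\rN)$ of Bessel potentials. Since the statement is recorded as ``Lemma A'' of \cite{MR2183832}, the shortest route is of course to cite that reference, with the sketch above indicating the self-contained argument.
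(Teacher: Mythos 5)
The paper itself gives no proof of this lemma; it simply cites \cite{MR2183832} (Alves--Soares, Lemma A), so there is no argument in the source to compare against. Your sketch via Bessel potentials and the Mikhlin--H\"ormander multiplier theorem is a correct, self-contained route to the same statement, and it is the standard one: uniqueness by Fourier transform in $\mathcal S'$, existence on $L^s\cap L^2$ via $u=\mathcal F^{-1}[(1+|\xi|^2)^{-1}\widehat h\,]$, the $W^{2,s}$ bound from Mikhlin applied to the symbols of order $0$, and then density in $L^s$. This is more informative than the paper's bare citation, and both are acceptable; the citation is of course shorter.

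One small slip worth fixing: the multiplier controlling $\partial_i u$ is $i\xi_i\,m_0(\xi)=i\xi_i(1+|\xi|^2)^{-1}$, not $\partial_i m_0(\xi)=-2\xi_i(1+|\xi|^2)^{-2}$. Both happen to satisfy the Mikhlin condition, so the estimate is unaffected, but as written the list of symbols you verify does not match the operators you need to bound. Replace $\partial_i m_0$ by $i\xi_i m_0$ (and note $m_{ij}(\xi)=-\xi_i\xi_j(1+|\xi|^2)^{-1}$ for the second derivatives) and the argument is complete. You should also note, as you implicitly do when dividing by $1+|\xi|^2$ in the uniqueness step, that multiplication by the smooth, bounded-with-polynomially-bounded-derivatives function $(1+|\xi|^2)^{-1}$ is continuous on $\mathcal S'$, which is what justifies recovering $\widehat u=0$.
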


Let us consider the following linear continuous operator \cite{MR125307}:
\be S_{p}:=(-\De+id)^{-1} :L^{p}(\rN)\to W^{2,p}(\rN) \quad \text{ for } \quad 1<p<\infty,\label{Ent.est1}\ee
and the continuous embeddings
\begin{equation*}
	\mathit{j}_{p}: W^{2,\al}(\rN) \to L^{q+1}(\rN) \quad \text{ and } \quad \mathit{j}_{q}: W^{2,\beta}(\rN) \to L^{p+1}(\rN).
\end{equation*}
Then 
\begin{equation*}
	S_{1}:=j_q\circ S_{\beta}: L^{\beta}(\rN) \to L^{p+1}(\rN), \quad \text{ and } \quad S_{2}:= j_p\circ S_{\alpha}:L^{\al}(\rN) \to L^{q+1}(\rN),
\end{equation*}
are also linear continuous operators. As in the previous section we can define the dual energy $I_\infty$ on $Y^*:=L^{\frac{p+1}{p}}(\rN)\times L^{\frac{q+1}{q}}(\rN)$ as 
\begin{equation}
	I_\infty(w_1,w_2):=\int_{\rN}\dfrac{p}{p+1}|w_{1}|^{\tfrac{p+1}{p}}+\dfrac{q}{q+1}|w_{2}|^{\tfrac{q+1}{q}}dx -\dfrac{1}{2}\int_{\rN}\left( w_{1}\mathit{S_{1}}w_{2}+w_{2}\mathit{S_{2}}w_{1}\right) dx.
\end{equation}

Note that the same results in \cref{comp.dual.soln} hold true in this case as well. Hence, for any positive radial solution of the limit problem, the corresponding dual pair is also positive radial and a critical point of the dual energy, and vice-versa. Also both the energy functionals attain the same value at these corresponding solution.

Now we see from \ef{Ent.est1} that the energy $I_\infty$ has Mountain-Pass geometry. The proof follows exactly as in \cref{MPG}. Let $\ml{C}_\infty$ be the min-max critical level of $I_\infty$ defined as 
\be \ml{C}_\infty:=\inf_{\varphi\in\ml{P}} \sup_{t\in[0,1]} I_\infty(\varphi(t)), \nee where
\be \ml{P}:=\{ \varphi \in C([0,1],  Y^*) \, |\, I_\infty(\varphi(0))=0,  I_\infty(\varphi(1))<0   \}.\nee

\begin{lemma}\label{PS-soln}
	There exist a positive critical point $(h,k)$ of $I_\infty$ with $I_\infty(h,k)=\ml{C}_\infty$.
\end{lemma}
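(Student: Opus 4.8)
The plan is to extract a nontrivial critical point of $I_\infty$ from the Palais--Smale sequence $(h_n,k_n)$ by exploiting the translation invariance of the entire problem, and then to match its energy with the min-max level $\ml{C}_\infty$ through an explicit fibering path. By \eqref{non-van-ps-seq} there are points $x_n\in\rN$ and $R,\eta>0$ along which the $L^{\alpha}$- or $L^{\beta}$-mass of the sequence does not vanish; I would set $(\tilde h_n,\tilde k_n):=(h_n(\cdot+x_n),k_n(\cdot+x_n))$ and use that $I_\infty$, $S_1$ and $S_2$ all commute with translations to conclude that $(\tilde h_n,\tilde k_n)$ is again a bounded $(PS)$ sequence for $I_\infty$ at level $\ml{C}_\infty$, so up to a subsequence $(\tilde h_n,\tilde k_n)\rightharpoonup(h,k)$ weakly in $Y^*$. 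Replacing $(h_n,k_n)$ by $(|h_n|,|k_n|)$ where needed --- which does not increase $I_\infty$ because $(-\Delta+\mathit{id})^{-1}$ has a strictly positive kernel, so the subtracted bilinear term only grows --- one may moreover assume $h_n,k_n\ge0$, hence $h,k\ge0$.

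The second step is to show that $(h,k)$ is a critical point. Put $(u_n,v_n):=(S_1\tilde k_n,S_2\tilde h_n)$; these are bounded in $W^{2,\beta}(\rN)$ and $W^{2,\alpha}(\rN)$, and since $p+1<\beta^{*}$ and $q+1<\alpha^{*}$ by \eqref{eqExponent}, the embeddings $W^{2,\beta}\hookrightarrow L^{p+1}$ and $W^{2,\alpha}\hookrightarrow L^{q+1}$ are compact on bounded domains, so along a subsequence $u_n\to u$ in $L^{p+1}_{loc}(\rN)$ and $v_n\to v$ in $L^{q+1}_{loc}(\rN)$. Exactly as in the proof of Lemma \ref{PS}, $I^{\prime}_\infty(\tilde h_n,\tilde k_n)\to0$ yields $\tilde h_n=|u_n|^{p-1}u_n+o(1)$ in $L^{\alpha}$ and $\tilde k_n=|v_n|^{q-1}v_n+o(1)$ in $L^{\beta}$; by local continuity of these Nemytskii maps (the same standard inequality used in Lemma \ref{PS}, applied on bounded domains), $\tilde h_n\to|u|^{p-1}u$ in $L^{\alpha}_{loc}$ and $\tilde k_n\to|v|^{q-1}v$ in $L^{\beta}_{loc}$, so that $h=|u|^{p-1}u$ and $k=|v|^{q-1}v$. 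Since $S_1,S_2$ are bounded linear, $\tilde k_n\rightharpoonup k$ forces $u_n=S_1\tilde k_n\rightharpoonup S_1k$, hence $u=S_1k$, and similarly $v=S_2h$; together these are precisely the Euler--Lagrange relations $I^{\prime}_\infty(h,k)=0$ in the sense of Lemma \ref{comp.dual.soln}. Passing \eqref{non-van-ps-seq} through the translation gives $\liminf_n\int_{B(0,R)}|\tilde h_n|^{\alpha}\ge\eta$ or $\liminf_n\int_{B(0,R)}|\tilde k_n|^{\beta}\ge\eta$, which by the local strong convergence just obtained yields $h\not\equiv0$ or $k\not\equiv0$; feeding this through $u=S_1k$, $v=S_2h$, $h=|u|^{p-1}u$, $k=|v|^{q-1}v$ and applying the strong maximum principle to $-\Delta u+u=|v|^{q-1}v\ge0$ and $-\Delta v+v=|u|^{p-1}u\ge0$ gives $u,v>0$, so $(h,k)$ is a positive critical point.

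Finally I would verify $I_\infty(h,k)=\ml{C}_\infty$. Testing $I^{\prime}_\infty(h,k)=0$ against $(h,k)$ gives $\int_{\rN}(hS_1k+kS_2h)=A+B$ with $A:=\int_{\rN}|h|^{\alpha}>0$ and $B:=\int_{\rN}|k|^{\beta}>0$, so $I_\infty(h,k)=\left(\tfrac{p}{p+1}-\tfrac12\right)A+\left(\tfrac{q}{q+1}-\tfrac12\right)B$; the same algebraic identity applied to the $(PS)$ sequence gives $\left(\tfrac{p}{p+1}-\tfrac12\right)\int_{\rN}|\tilde h_n|^{\alpha}+\left(\tfrac{q}{q+1}-\tfrac12\right)\int_{\rN}|\tilde k_n|^{\beta}=I_\infty(\tilde h_n,\tilde k_n)-\tfrac12 I^{\prime}_\infty(\tilde h_n,\tilde k_n)(\tilde h_n,\tilde k_n)\to\ml{C}_\infty$, so the positivity of the two coefficients (here $p,q>1$) together with the weak lower semicontinuity of the $L^{\alpha}$- and $L^{\beta}$-norms yields $I_\infty(h,k)\le\ml{C}_\infty$. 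For the reverse inequality, consider $g(t):=I_\infty(th,tk)=\tfrac{p}{p+1}t^{\alpha}A+\tfrac{q}{q+1}t^{\beta}B-\tfrac{t^2}{2}(A+B)$ for $t\ge0$: since $\alpha,\beta\in(1,2)$ and $\tfrac{p}{p+1}\alpha=\tfrac{q}{q+1}\beta=1$, one has $g^{\prime}(t)=t^{\alpha-1}A+t^{\beta-1}B-t(A+B)$, which is positive on $(0,1)$ and negative on $(1,\infty)$, so $t=1$ is the strict global maximum of $g$ on $[0,\infty)$ and $g(t)\to-\infty$. Picking $T>1$ with $g(T)<0$, the path $s\mapsto(sTh,sTk)$, $s\in[0,1]$, lies in $\ml{P}$ with $\sup_{s\in[0,1]}I_\infty(sTh,sTk)=g(1)=I_\infty(h,k)$, hence $\ml{C}_\infty\le I_\infty(h,k)$. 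The main obstacle throughout is the absence of compactness of $S_1,S_2$ on $\rN$: Lion's lemma is needed to locate the mass, the translation is what converts its non-vanishing into a nontrivial weak limit, and every passage to the limit in the nonlinear terms must be carried out locally, where the Sobolev embeddings are compact, and then reconciled with the global weak limit.
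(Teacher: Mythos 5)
Your overall route---locate the non-vanishing mass via Lions' lemma, translate the Palais--Smale sequence, pass to the limit using local compactness of the embeddings $W^{2,\beta}\hookrightarrow L^{p+1}$ and $W^{2,\alpha}\hookrightarrow L^{q+1}$, and identify the weak limit as a critical point through $u=S_1k$, $v=S_2h$, $h=|u|^{p-1}u$, $k=|v|^{q-1}v$---matches the paper's argument, and is in fact slightly cleaner because you translate upfront rather than splitting into the two cases $(h,k)\neq(0,0)$ and $(h,k)=(0,0)$ as the paper does. Your explicit fibering calculation, showing that $g(t)=I_\infty(th,tk)$ has its unique maximum at $t=1$ (using $\tfrac{p}{p+1}\alpha=\tfrac{q}{q+1}\beta=1$ and $\alpha,\beta\in(1,2)$) and hence that the path $s\mapsto(sTh,sTk)$ is admissible with $\sup I_\infty=I_\infty(h,k)$, actually fills in a step that the paper states without proof when it asserts $\mathcal{C}_\infty\le I_\infty(\tilde h,\tilde k)$ for a nontrivial critical point. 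The weak lower semicontinuity argument for the reverse inequality $I_\infty(h,k)\le\mathcal{C}_\infty$ matches the paper's.

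There is, however, a genuine gap in your positivity argument. You claim that replacing $(h_n,k_n)$ by $(|h_n|,|k_n|)$ ``where needed'' lets one assume $h_n,k_n\ge 0$, justified only by the observation that $I_\infty(|h_n|,|k_n|)\le I_\infty(h_n,k_n)$. That inequality is true (the Bessel kernel is positive, so $\int|h_n|S_1|k_n|\ge\int h_nS_1k_n$), but it does \emph{not} make $(|h_n|,|k_n|)$ a Palais--Smale sequence: the energy could drop strictly below $\mathcal{C}_\infty$ in the limit, and, more seriously, the map $w\mapsto|w|$ is not $C^1$ so there is no reason why $I_\infty^{\prime}(|h_n|,|k_n|)\to 0$ should follow from $I_\infty^{\prime}(h_n,k_n)\to 0$. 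Without $h,k\ge 0$ you cannot deduce $u=S_1k\ge 0$ and $v=S_2h\ge 0$, so the strong maximum principle step that needs $-\Delta u+u=|v|^{q-1}v\ge 0$ does not even get started. Positivity of a least-energy critical point of this strongly coupled Hamiltonian system requires a separate argument; the paper invokes Lemma 3.1 of \cite{MR2287528} for this, and you should do the same or supply an actual proof (e.g.\ via a modified nonlinearity $w\mapsto(w^+)^{1/p}$ in the dual functional, or moving planes on the system for $(u,v)$) rather than the absolute-value shortcut at the level of the $(PS)$ sequence.
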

\begin{proof}
	Let $(h_n,k_n)\in Y^*$ be a $(PS)$ sequence at the level $\ml{C}_{\infty}$. Then,
	\begin{equation}\label{LowerBound}
		\(\dfrac{p}{p+1}-\dfrac{1}{2}\)\int_{\rN}|h_{n}|^{\tfrac{p+1}{p}}dx+\(\dfrac{q}{q+1}-\dfrac{1}{2}\)\int_{\rN}|k_{n}|^{\tfrac{q+1}{q}}dx = \mathcal{C}_{\infty} +o(1).
	\end{equation}
	This implies $(h_n, k_n)$ is bounded in $Y^{*}$, and there exists $(h,k)\in Y^{*}$ such that  up to a subsequence, $(h_n,k_n)\rightharpoonup (h,k)$ in $Y^*$. Also, the above equation directly implies that $(h_n, k_n)\nrightarrow (0,0)$. Then by Lion's Lemma, we get $r,\eta>0$ and a sequence $\{x_n\}_{n}$ in $\rN$ such that 
	\begin{equation}\label{non-van-ps-seq} 
		\liminf_{n \to +\infty}\int_{B(x_{n},R)}|h_n|^{\frac{p+1}{p}}\geq \eta\quad  \text{ or }\quad \liminf_{n \to +\infty}\int_{B(x_{n},R)}|k_n|^{\frac{q+1}{q}}\geq \eta .
	\end{equation}
	Let $(u_{n}, v_{n}) :=(S_{1}k_{n}, S_{2}h_{n})$. Then, by the continuity of the operators we have 
	\begin{equation*}
		\|u_{n}\|_{W^{2, \beta}(\rN)}, \|v_{n}\|_{W^{2, \alpha}(\rN)} \leq C,
	\end{equation*}
	for some positive constant $C$. Therefore, up to a subsequence, $(u_n,v_n)\rightharpoonup (u,v)$ in $W^{2, \beta}(\rN)\times W^{2, \alpha}(\rN)$, and by Rellich theorem
	\begin{equation*}
		(u_{n}, v_{n}) \to (u, v) \quad \text{ strongly in } \quad L^{p+1}_{loc}(\rN) \times L^{q+1}_{loc}(\rN),
	\end{equation*}
	Also, for any $(\phi,\psi)\in Y^*$, we have 
	\begin{align*}
		I^{{\prime}}_\infty(h_n,k_n)(\phi,\psi)=o(1)
		\implies&\int_{\rN}\(|h_n|^{\al-2}h_n- u_n\)\phi dx +\int_{\rN}\(|k_n|^{\beta-2}k_n- v_n\)\psi dx =o(1).
	\end{align*}
	Replacing $(\phi,\psi)$ by $(\phi,0)$ and $(0,\psi)$ we get 
	\be \int_{\rN}\(|h_n|^{\al-2}h_n- u_n\)\phi dx=o(1)=\int_{\rN}\(|k_n|^{\beta-2}k_n- v_n\)\psi dx.\label{lpq.eq}\ee
	This implies 
	\begin{equation*}
		|h_{n}|^{\tfrac{1}{p}-1}h_{n} \rightharpoonup u, \quad |k_n|^{\tfrac{1}{q}-1}k_n \rightharpoonup v \quad \text{ in } L^{p+1}(\rN) \text{ and } L^{q+1}(\rN) \text{ respectively}.
	\end{equation*}
	Define the functions
	\begin{equation*}
		(\tilde{h}, \tilde{k}):=(|u|^{p-1}u, |v|^{q-1}v) \qquad \text{ so that } \quad (u, v)=(|\tilde{h}|^{\tfrac{1}{p}-1}\tilde{h}, |\tilde{k}|^{\tfrac{1}{q}-1}\tilde{k}).
	\end{equation*}
    Now, let
    \be  \phi:=\abs{|h_n|^{\al-2}h_n- u_n}^{p-1}\(|h_n|^{\al-2}h_n- u_n\), \text{ and }\psi:=\abs{|k_n|^{\beta-2}k_n- v_n}^{q-1}\(|k_n|^{\beta-2}k_n- v_n\).\nee  
    Then $(\phi,\psi) \in Y^{*}$ (see Proposition 4.1 of \cite{MR4478364}). Using this test function in \eqref{lpq.eq}, we get
	\be \norm{|h_n|^{\al-2}h_n- u_n}_{L^{p+1}(\rN)}=o(1)=\norm{|k_n|^{\beta-2}k_n- v_n}_{L^{q+1}(\rN)}.\nee
	Then,
	\begin{equation*}
		\norm{|h_n|^{\tfrac{1}{p}-1}h_n- |\tilde{h}|^{\tfrac{1}{p}-1}\tilde{h}}_{L^{p+1}_{loc}(\rN)} \leq \norm{|h_n|^{\tfrac{1}{p}-1}h_n- u_{n}}_{L^{p+1}_{loc}(\rN)}+\norm{u_{n}- u}_{L^{p+1}_{loc}(\rN)}=o(1).
	\end{equation*}
	Therefore, $h_{n} \to \tilde{h}$ a.e. in $\rN$ and similarly, $k_{n} \to \tilde{k}$ a.e. in $\rN.$ From the uniqueness of weak limits, it follows that $h=\tilde{h}$ and $k=\tilde{k}$. And hence  
	\be \left| \int_{\rN}|h_{n}|^{\tfrac{1}{p}-1}h_{n}\phi dx-\int_{\rN}|h|^{\tfrac{1}{p}-1}h\phi dx\right|=o(1)=\left|\int_{\rN}|k_{n}|^{\tfrac{1}{q}-1}k_{n}\psi dx-\int_{\rN}|k|^{\tfrac{1}{q}-1}k\psi dx\right|.\nee
  Finally, using the continuity of the operators $S_1$ and $S_2$ we get 
	\be \left|\int_{\rN}\left( \phi S_{1}k_{n}+\psi S_{2}h_{n}\right) dx-\int_{\rN}\left( \phi S_{1}k+\psi S_{2}h\right) dx\right|=o(1).\nee
	Combining all these arguments we get
	\begin{equation*}
		\begin{aligned}
			&|I_\infty^{\prime}(h_{n},k_{n})(\phi, \psi)-I_\infty^{\prime}(h,k)(\phi,\psi)|\\
			&=\left| \int_{\rN}|h_{n}|^{\tfrac{1}{p}-1}h_{n}\phi dx+\int_{\rN}|k_{n}|^{\tfrac{1}{q}-1}k_{n}\psi dx
			-\int_{\rN}\left( \phi S_{1}k_{n}+\psi S_{2}h_{n}\right) dx\right.\\
			&- \left.\int_{\rN}|h|^{\tfrac{1}{p}-1}h\phi dx-\int_{\rN}|k|^{\tfrac{1}{q}-1}k\psi dx+\int_{\rN}\left( \phi S_{1}k+\psi S_{2}h\right) dx\right|\\
			&\leq \left| \int_{\rN}|h_{n}|^{\tfrac{1}{p}-1}h_{n}\phi dx-\int_{\rN}|h|^{\tfrac{1}{p}-1}h\phi dx\right|+\left|\int_{\rN}|k_{n}|^{\tfrac{1}{q}-1}k_{n}\psi dx-\int_{\rN}|k|^{\tfrac{1}{q}-1}k\psi dx\right|\\
			&+\left|\int_{\rN}\left( \phi S_{1}k_{n}+\psi S_{2}h_{n}\right) dx-\int_{\rN}\left( \phi S_{1}k+\psi S_{2}h\right) dx\right|\\
			&=o(1).
		\end{aligned}
	\end{equation*}
	Hence we have found a critical point $(h,k)$ of $I_\infty.$ Now, if $(h,k)=(0,0)$, define 
	\begin{equation*}
		(\tilde{h}_{n}(x), \tilde{k}_{n}(x))=(h_{n}(x+x_{n}), k_{n}(x+x_{n})).
	\end{equation*}
	where $\{x_n\}$ is the sequence defined in \eqref{non-van-ps-seq}. Then we have, 
	\begin{equation*}
		I_\infty(\tilde{h}_{n}, \tilde{k}_{n}) = I_\infty(h_{n}, k_{n}) \to \mathcal{C}_{\infty},  \qquad \text{and } \quad I_\infty^{\prime}(\tilde{h}_{n}, \tilde{k}_{n})(\phi, \psi) = I_\infty^{\prime}(h_{n}, k_{n})(\tilde{\phi}, \tilde{\psi}) = 0,
	\end{equation*}
	where $\tilde{\phi}(x)=\phi(x-x_{n})$ and $\tilde{\psi}(x)=\psi(x-x_{n})$. Then $(\tilde{h}_{n}, \tilde{k}_{n})$ is a bounded sequence and $(\tilde{h}_{n}, \tilde{k}_{n}) \rightharpoonup  (\tilde{h}, \tilde{k})$. Also by \eqref{non-van-ps-seq}, we get
	\begin{equation}
		\begin{aligned}
			\liminf_{n \to +\infty}\int_{B(0,R)}|\tilde{h}_{n}|^{\frac{p+1}{p}}dx &=\liminf_{n \to +\infty}\int_{B(x_{n},R)}|h_{n}|^{\frac{p+1}{p}}dx \geq \eta>0,\\
			\text{or}  \qquad \liminf_{n \to +\infty}\int_{B(0, R)}|\tilde{k}_{n}|^{\frac{q+1}{q}}dx &=\liminf_{n \to +\infty}\int_{B(x_{n},R)}|k_{n}|^{\frac{q+1}{q}}dx \geq \eta>0.
		\end{aligned}
	\end{equation}
	This gives $(\tilde{h}, \tilde{k})$ is a nontrivial solution. Thus we have for the nontrivial solution $(h, k)$, $\mathcal{C}_{\infty} \leq I_{\infty}(h, k)$. Also
	\begin{equation*}
		\begin{aligned}
			I_{\infty}(h,k) &=\(\dfrac{p}{p+1}-\dfrac{1}{2}\)\int_{\rN}|h|^{\tfrac{p+1}{p}} dx+\(\dfrac{q}{q+1}-\dfrac{1}{2}\)\int_{\rN}|k|^{\tfrac{q+1}{q}} dx\\
			&\leq\liminf_{n \to \infty}\left[ \(\dfrac{p}{p+1}-\dfrac{1}{2}\)\int_{\rN}|h_{n}|^{\tfrac{p+1}{p}} dx+\(\dfrac{q}{q+1}-\dfrac{1}{2}\)\int_{\rN}|k_{n}|^{\tfrac{q+1}{q}} dx\right] \\
			&=\liminf_{n \to \infty}\left[ I_{\infty}(h_{n},k_{n}) - \dfrac{1}{2}I_{\infty}^{\prime}(h,k)(h,k)\right] \\
			&=\mathcal{C}_{\infty}.
		\end{aligned}
	\end{equation*}
	It then follows from Lemma 3.1 of \cite{MR2287528} that the ground state $(h, k)$ is positive.
\end{proof}

Let $\tilde{\ml{L}}$ denote the set of all positive critical points of $I_\infty$. For any $(h,k)\in \tilde{\ml{L}}$, define $(U,V):=(S_1k,S_2h)\in W^{2,\beta}(\rN)\times W^{2,\alpha}(\rN)$. Then $(U,V)$ is a positive solution of the entire problem \ef{es}, with $(U,V)=(h^{1/p},k^{1/q})$. From \cite{MR1755067, MR1785681}, every $(h,k)\in \tilde{\ml{L}}$ is radial, and the corresponding $(U,V)$ is also radial and satisfies the decay estimates
\begin{equation}
	|D^{\delta}U(x)|, |D^{\delta}V(x)| \leq C exp(-c|x|),\label{e.decay}
\end{equation}
for all $\abs{\delta}\le 2$, $C, c>0$. We denote the set  
\be \ml{L}:=\{(U,V)\in W^{2,\beta}(\rN)\times W^{2,\alpha}(\rN)\,|\,  (U^p,V^q) \text{ is a least energy positive critical point of } I_\infty  \}.\nee

\begin{proposition}\label{cptsolnsp}
	$\ml{L}$ is compact in $Y^*$.
\end{proposition}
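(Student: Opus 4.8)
The plan is to identify $\ml{L}$, via the bijection $(U,V)\mapsto(U^{p},V^{q})=:(h,k)$, with the set of least energy positive critical points of $I_{\infty}$, which lies in $Y^{*}$, and then to show that every sequence in $\ml{L}$ has a subsequence converging strongly in $W^{2,\ba}(\rN)\times W^{2,\al}(\rN)$ — equivalently, one whose duals converge strongly in $Y^{*}$ — to an element of $\ml{L}$. The first step is a uniform bound: for $(U,V)\in\ml{L}$ the dual $(h,k)$ is a critical point of $I_{\infty}$ with $I_{\infty}(h,k)=\ml{C}_{\infty}$ and $I_{\infty}'(h,k)(h,k)=0$, and combining these exactly as in \eqref{LowerBound} bounds both $\int_{\rN}|h|^{\frac{p+1}{p}}$ and $\int_{\rN}|k|^{\frac{q+1}{q}}$ by $\ml{C}_{\infty}$ up to positive constants (here $p,q>1$ is used). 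Thus $(h,k)$ ranges over a bounded set in $Y^{*}$; by continuity of $S_{\ba},S_{\al}$ (Lemma \ref{Lemma_A}) the pairs $(U,V)=(S_{\ba}k,S_{\al}h)$ form a bounded set in $W^{2,\ba}(\rN)\times W^{2,\al}(\rN)$, and by the symmetry results recalled before the statement, a bounded set in the radial subspaces $W^{2,\ba}_{\mathrm{rad}}\times W^{2,\al}_{\mathrm{rad}}$.

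Given a sequence $(U_{n},V_{n})\in\ml{L}$ with duals $(h_{n},k_{n})=(U_{n}^{p},V_{n}^{q})$, I would extract a strongly convergent subsequence. The key point is that, by \eqref{eqExponent}, $p+1$ is strictly subcritical for $W^{2,\ba}(\rN)$ and $q+1$ is strictly subcritical for $W^{2,\al}(\rN)$ (and both exceed $\ba$, $\al$ respectively, since $p+1>2$ and $q+1>2$), so the radial Sobolev embeddings $W^{2,\ba}_{\mathrm{rad}}(\rN)\hookrightarrow L^{p+1}(\rN)$ and $W^{2,\al}_{\mathrm{rad}}(\rN)\hookrightarrow L^{q+1}(\rN)$ are compact; hence, along a subsequence, $U_{n}\to U$ in $L^{p+1}$ and $V_{n}\to V$ in $L^{q+1}$. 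The elementary inequality used in the proof of Lemma \ref{PS} (continuity of $t\mapsto|t|^{p-1}t$ from $L^{p+1}$ into $L^{\frac{p+1}{p}}$, and its analogue for $q$) then promotes this to $h_{n}=U_{n}^{p}\to U^{p}=:h$ and $k_{n}=V_{n}^{q}\to V^{q}=:k$ strongly in $Y^{*}$; applying $S_{\ba},S_{\al}$ and Lemma \ref{Lemma_A} once more even yields $U_{n}\to U$ in $W^{2,\ba}$ and $V_{n}\to V$ in $W^{2,\al}$.

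It then remains to check that $(U,V)\in\ml{L}$, i.e. that $(h,k)$ is again a least energy positive critical point of $I_{\infty}$. Since $I_{\infty}\in C^{1}(Y^{*})$ and $(h_{n},k_{n})\to(h,k)$ strongly in $Y^{*}$, passing to the limit in $I_{\infty}'(h_{n},k_{n})=0$ gives $I_{\infty}'(h,k)=0$. Nontriviality: if $(h,k)=(0,0)$ then $\|h_{n}\|_{L^{\frac{p+1}{p}}},\|k_{n}\|_{L^{\frac{q+1}{q}}}\to0$, which by the identity of the first step forces $\ml{C}_{\infty}=0$, contradicting the Mountain Pass bound $\ml{C}_{\infty}>0$. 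Positivity follows from $h_{n},k_{n}\ge0$ (hence $h,k\ge0$) together with the strong maximum principle for $U=S_{\ba}k$, $V=S_{\al}h$, as at the end of the proof of Lemma \ref{PS-soln}; radial symmetry about the origin is preserved under the $L^{p+1}/L^{q+1}$-limit, and $I_{\infty}(h,k)=\lim_{n}I_{\infty}(h_{n},k_{n})=\ml{C}_{\infty}$ by continuity of $I_{\infty}$. Hence $(h,k)$ is a least energy positive critical point, $(U,V)\in\ml{L}$, and $\ml{L}$ is compact.

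The hard part is excluding loss of compactness at infinity: in the general (PS) analysis of Lemma \ref{PS-soln} this could only be recovered after a translation, whereas here the radial symmetry of the elements of $\ml{L}$ — a consequence of the symmetry results of \cite{MR1755067,MR1785681} — allows one to convert the weak $W^{2,\ba}\times W^{2,\al}$-bound directly into \emph{strong} $L^{p+1}\times L^{q+1}$-convergence via the compact radial embedding. It is essential for this that $p+1$ and $q+1$ be strictly subcritical, i.e. the strict inequalities in \eqref{eqExponent}, which stem from the strict inequality in \eqref{HC}. A secondary delicate point is verifying that the limit is a genuine ground state rather than merely some nontrivial critical point, for which one uses $\ml{C}_{\infty}>0$ together with the continuity of $I_{\infty}$ under the strong convergence just established.
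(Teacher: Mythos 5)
Your proof is correct and follows essentially the same route as the paper's: use radial symmetry to place $\ml{L}$ in a bounded subset of $W^{2,\beta}_r(\rN)\times W^{2,\alpha}_r(\rN)$, invoke the compact radial embedding into $L^{p+1}\times L^{q+1}$ guaranteed by the strict inequalities in \eqref{eqExponent}, and then pass back through the operators $S_\beta, S_\alpha$ and the (PS)-type arguments of Lemma \ref{PS-soln} to confirm that the limit is again a positive least energy critical point. Your write-up is somewhat more explicit than the paper's --- in particular you spell out the uniform $Y^{*}$-bound via $I_\infty(h,k)-\tfrac12 I_\infty'(h,k)(h,k)=\ml{C}_\infty$ and you separate the upgrade from $Y$- to $W^{2,\beta}\times W^{2,\alpha}$-convergence from the verification that the limit lies in $\ml{L}$ --- but the content and the key mechanism (radial compactness replacing the translation argument of the non-radial (PS) analysis) coincide with the paper's.
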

\begin{proof}
	As the embedding $W^{2,\al}_r(\rN)\hookrightarrow L^{s}(\rN)$ is compact for all $\al <s<\al^*$, we have $W_r^{2,\beta}(\rN)\times W_r^{2,\alpha}(\rN)\hookrightarrow L^{p+1}(\rN) \times L^{q+1}(\rN)$ is compact. So it is enough to prove $\ml{L}$ is a closed subset of $W_r^{2,\beta}(\rN)\times W_r^{2,\alpha}(\rN)$. Let $(U_n, V_n)\in \ml{L}$ converge to $(U,V)$ in $W^{2,\beta}(\rN)\times W^{2,\alpha}(\rN).$ Then obviously $(U^p_n, V^q_n)$ is a $(PS)$ sequence for $I_\infty$, and using the same arguments in \cref{PS-soln} we can easily show $(U,V)\in\ml{L}$.
\end{proof}


\section{An upper bound for the Least energy}

In this section, we derive an appropriate upper bound for the least energy. This bound is crucial as it provides important insight into the concentration behaviour and the limiting profile of the least energy solution. Let $0<R<R^*$. Consider a radial cutoff function $\phi_{R} \in C_{0}^{\infty}\(\rN,[0,1]\)$ such that $|\grad \phi_{R}| \leq \frac{2}{R}$, $|\Delta \phi_{R}| \leq \frac{2}{R^{2}}$ and
\begin{equation}\label{radcutoff}
	\phi_{R}(x)=
	\begin{cases}
		1 \quad \text{ for } |x| \leq \frac{R}{2},\\
		0 \quad \text{ for } |x| \geq R.
	\end{cases}
\end{equation}
For $p \in M$ and $\eps>0$, we define the function
\begin{equation*}
	\(U_{\eps}^{R}(x), V_{\eps}^{R}(x)\):=
	\begin{cases}
		\begin{aligned}
			&\(\phi_{R}\(\Psi_{p}(x)\)U_{\eps}\(\Psi_{p}(x)\),  \phi_{R}\(\Psi_p(x)\)V_{\eps}\(\Psi_{p}(x)\)\) &&\text{if } x\in B_{g}(p,R),\\
			&0, &&\text{otherwise},
		\end{aligned}
	\end{cases}
\end{equation*}
where $U_{\eps}(x)=U(\dfrac{x}{\eps})$ and $V_{\eps}(x)=V(\dfrac{x}{\eps})$ for a least energy solution $(U, V)$ of $\eqref{es}$. For convenience, we denote $\(U_{\eps}^{R}(x), V_{\eps}^{R}(x)\)$ by $\(U_{*}(x), V_{*}(x)\)$. Let $w=\(w_{1}, w_{2}\)= \(U_{*}^{p}(x), V_{*}^{q}(x)\)$ and define
\begin{equation*}
	\begin{aligned}
		h_\eps(t):=I_{\eps}(tw) &=\int_{\M}\dfrac{pt^{\tfrac{p+1}{p}}}{p+1}|w_{1}|^{\tfrac{p+1}{p}}+\dfrac{qt^{\tfrac{q+1}{q}}}{q+1}|w_{2}|^{\tfrac{q+1}{q}}dv_g -\dfrac{t^{2}}{2}\int_{\M}\left( w_{1}\mathit{T_{1}}w_{2}+w_{2}\mathit{T_{2}}w_{1}\right) dv_g\\
		&=\dfrac{pt^{\tfrac{p+1}{p}}}{p+1}\int_{\M}U_{*}^{p+1}dv_g+\dfrac{qt^{\tfrac{q+1}{q}}}{q+1}\int_{\M}V_{*}^{q+1}dv_g-\dfrac{t^{2}}{2}\int_{\M}\left( U_{*}^{p}\mathit{T_{1}}V_{*}^{q}+V_{*}^{q}\mathit{T_{2}}U_{*}^{p}\right) dv_g.
	\end{aligned}
\end{equation*}

We now calculate the integrals:
\begin{lemma}\label{UE2a}
	For any $p \in \M$,
	\begin{equation*}
		\begin{aligned}
			\int_{\M}U_{*}^{p+1}dv_g
			&=\eps^{N}\Big[\int_{\rN}U^{p+1}\(y\)dy-\dfrac{\eps^{2}}{6}\dfrac{S(p)}{N}\int_{\rN}U^{p+1}\(y\)|y|^{2}dy+o(\eps^{2})\Big],\\
			\int_{\M}V_{*}^{q+1}dv_g
			&=\eps^{N}\Big[\int_{\rN}V^{q+1}\(y\)dy- \dfrac{\eps^{2}}{6}\dfrac{S(p)}{N}\int_{\rN}V^{q+1}\(y\)|y|^{2}dy+o(\eps^{2})\Big],
		\end{aligned}
	\end{equation*}
	as $\eps \to 0$, where $S(p)$ is the Scalar curvature of $\M$ at $p \in \M$.
\end{lemma}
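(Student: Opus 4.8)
The plan is to transport the integral to Euclidean space through the normal coordinate chart $\Psi_p$ and then rescale. Since $U_*$ is supported in $B_g(p,R)$ with $R<R^*$, expressing $dv_g$ in the coordinates $\Psi_p$ gives
\[
\int_{\M}U_*^{p+1}\,dv_g=\int_{B(0,R)}\phi_R(x)^{p+1}\,U\!\left(\tfrac{x}{\eps}\right)^{p+1}\sqrt{g}(x)\,dx .
\]
Substituting $x=\eps y$ (so $dx=\eps^{N}\,dy$) and inserting the expansion $\sqrt{g}(x)=1-\tfrac16 Ric(x,x)+O(|x|^3)$ from \eqref{metricexpansion}, which holds uniformly on $\overline{B(0,R)}$ since $\M$ is compact, we obtain
\[
\int_{\M}U_*^{p+1}\,dv_g=\eps^{N}\int_{B(0,R/\eps)}\phi_R(\eps y)^{p+1}\,U(y)^{p+1}\Big(1-\tfrac{\eps^{2}}{6}Ric(y,y)+O(\eps^{3}|y|^{3})\Big)\,dy,
\]
where we used $Ric(\eps y,\eps y)=\eps^{2}Ric(y,y)$. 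I would then split the bracket and estimate the three resulting contributions separately, the essential input being that $U$ is radial and decays exponentially together with its derivatives (as recorded after Proposition~\ref{cptsolnsp}).

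For the leading contribution, since $\phi_R(\eps y)=1$ for $|y|\le R/(2\eps)$ and $0\le\phi_R\le1$,
\[
\Big|\int_{B(0,R/\eps)}\phi_R(\eps y)^{p+1}U(y)^{p+1}\,dy-\int_{\rN}U^{p+1}(y)\,dy\Big|\le 2\int_{|y|\ge R/(2\eps)}U^{p+1}(y)\,dy=o(\eps^{2}),
\]
the last equality because $U(y)^{p+1}\le Ce^{-c(p+1)|y|}$ makes the tail integral exponentially small in $1/\eps$. The cubic remainder is bounded by $C\eps^{3}\int_{\rN}U^{p+1}(y)|y|^{3}\,dy=O(\eps^{3})=o(\eps^{2})$, the integral being finite by the same decay; and in both the remainder and the curvature term, replacing $\phi_R(\eps y)^{p+1}$ by $1$ costs a further $o(\eps^{2})$ for the same reason.

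It remains to evaluate $\eps^{N}\cdot\big(-\tfrac{\eps^{2}}{6}\big)\int_{\rN}U^{p+1}(y)\,Ric(y,y)\,dy$. Writing $Ric(y,y)=\sum_{i,j}\mathrm{Ric}_{ij}(p)\,y_iy_j$, radial symmetry of $U$ gives $\int_{\rN}U^{p+1}(y)\,y_iy_j\,dy=0$ for $i\neq j$ and $\int_{\rN}U^{p+1}(y)\,y_i^{2}\,dy=\tfrac1N\int_{\rN}U^{p+1}(y)|y|^{2}\,dy$ for each $i$, so that
\[
\int_{\rN}U^{p+1}(y)\,Ric(y,y)\,dy=\Big(\sum_{i}\mathrm{Ric}_{ii}(p)\Big)\frac1N\int_{\rN}U^{p+1}(y)|y|^{2}\,dy=\frac{S(p)}{N}\int_{\rN}U^{p+1}(y)|y|^{2}\,dy,
\]
since $S(p)$ is the trace of $Ric$ at $p$. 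Assembling the three contributions yields the stated expansion for $\int_{\M}U_*^{p+1}\,dv_g$, and the argument is verbatim the same with $V$ in place of $U$. There is no serious obstacle here; the only point demanding care is the bookkeeping that every discarded term is genuinely $o(\eps^{2})$, which rests entirely on the exponential decay of $U$ and $V$ (both to kill the truncation errors from $\phi_R$ and to render $\int U^{p+1}|y|^{3}$ finite), while the curvature identity is the one genuinely geometric step and is immediate from radial symmetry.
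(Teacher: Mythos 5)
Your proposal is correct and follows essentially the same route as the paper: pass to normal coordinates, rescale by $\eps$, insert the expansion \eqref{metricexpansion} for $\sqrt{g}$, dispose of all truncation and higher-order errors via the exponential decay of $U,V$, and use radial symmetry to turn $\int U^{p+1}\,Ric(y,y)\,dy$ into $\frac{S(p)}{N}\int U^{p+1}|y|^2\,dy$. The only difference is cosmetic — you make the radial-averaging identity explicit componentwise, while the paper states it without detail.
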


\begin{proof}
	We first note that from the decay estimate \eqref{e.decay}, we have,
	\begin{equation*}
		\begin{aligned}
			\left|\int_{B(0, \tfrac{R}{\eps}) \setminus B(0, \tfrac{R}{2\eps})}U^{p+1}\(y\)\sqrt{g(\eps y)} dy\right|
			&\leq \int_{B(0, \tfrac{R}{\eps})\setminus B(0, \tfrac{R}{2\eps})}|U(y)|^{p+1}\left|1+O((\eps |y|)^{2})\right|  dy\\
			&\leq \int_{ \tfrac{R}{2\eps}}^{\infty}Ce^{-c(p+1)\delta r}r^{N-1}\(1+O((\eps r)^{2})\)  dr\\
			&\leq C\(\dfrac{R}{2\eps}\)^{N+2}e^{-\tfrac{c(p+1)R}{\eps}}.
		\end{aligned}
	\end{equation*}
	This implies
	\begin{equation*}
		\begin{aligned}
			\int_{\M}U_{*}^{p+1}dv_g
			&=\int_{B_{g}(p,R)}\phi_{R}^{p+1}\(\Psi_{p}(x)\)U^{p+1}_{\eps}\(\Psi_{p}(x)\)dv_g\\
			&=\int_{B(0, R)}\phi_{R}^{p+1}\(x\)U^{p+1}\(\dfrac{x}{\eps}\)\sqrt{g_{p}(x)} dx\\
			&=\eps^{N}\Big[\int_{B(0, \tfrac{R}{2\eps})}U^{p+1}\(y\)\sqrt{g_{p}(\eps y)} dy+\int_{B(0, \tfrac{R}{\eps})\setminus B(0, \tfrac{R}{2\eps})}\phi_{R}^{p+1}\(\eps y\)U^{p+1}\(y\)\sqrt{g_{p}(\eps y)} dy\Big]\\
			&=\eps^{N}\Big[\int_{\rN}U^{p+1}\(y\)\sqrt{g_{p}(\eps y)} dy-\int_{\rN\setminus B(0, \tfrac{R}{2\eps})}U^{p+1}\(y\)\sqrt{g_{p}(\eps y)} dy+O(exp\(\tfrac{-c}{\eps}\)\Big]\\
			&=\eps^{N}\Big[\int_{\rN}U^{p+1}\(y\)dy-\int_{\rN}\dfrac{1}{6}Ric(\eps y, \eps y)U^{p+1}\(y\)dy+o(\eps^{2})\Big]\\
			&=\eps^{N}\Big[\int_{\rN}U^{p+1}\(y\)dy-\dfrac{\eps^{2}}{6}\dfrac{S(p)}{N}\int_{\rN}U^{p+1}\(y\)|y|^{2}dy+o(\eps^{2})\Big].\\
		\end{aligned}
	\end{equation*}
	Similarly, we can calculate for $V_{*}$ and
	\begin{equation*}
		\int_{\M}V_{*}^{q+1}dv_g =\eps^{N}\Big[\int_{\rN}V^{q+1}\(y\)dy-\dfrac{\eps^{2}}{6}\dfrac{S(p)}{N}\int_{\rN}V^{q+1}\(y\)|y|^{2}dy+o(\eps^{2})\Big].
	\end{equation*}
\end{proof}

We now estimate the error in $h_{\eps}(t)$.
\begin{lemma}\label{UE1a}
	We have
	\begin{equation*}
		\int_{\M}\left( U_{*}^{p}\mathit{T_{1}}V_{*}^{q}+V_{*}^{q}\mathit{T_{2}}U_{*}^{p}\right) dv_g=\int_{\M} U_{*}^{p+1}+V_{*}^{q+1}dv_g+ o(\eps^{N+2}),
	\end{equation*}
	as $\eps \to 0$.
\end{lemma}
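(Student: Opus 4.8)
The plan is to reduce everything to the single self‑adjoint operator $T_\eps:=(-\eps^2\Delta_g+\mathrm{id})^{-1}$, to which both $T_1$ and $T_2$ restrict, and then to use that $(U_*,V_*)$ almost solves the entire system. Set $\psi_1:=T_\eps V_*^q$ and $\psi_2:=T_\eps U_*^p$, so the left‑hand side is $\int_\M(U_*^p\psi_1+V_*^q\psi_2)\,dv_g$, while the right‑hand side is $\int_\M(U_*^p\,U_*+V_*^q\,V_*)\,dv_g$; hence the claim is equivalent to
\[
\int_\M U_*^p(\psi_1-U_*)\,dv_g+\int_\M V_*^q(\psi_2-V_*)\,dv_g=o(\eps^{N+2}).
\]
Introduce the residuals $E_1:=V_*^q-(-\eps^2\Delta_g+\mathrm{id})U_*$ and $E_2:=U_*^p-(-\eps^2\Delta_g+\mathrm{id})V_*$, so that $\psi_1-U_*=T_\eps E_1$ and $\psi_2-V_*=T_\eps E_2$. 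Since $\M$ is closed, $-\eps^2\Delta_g+\mathrm{id}$ is self‑adjoint on $L^2(\M,dv_g)$, and using $U_*^p=(-\eps^2\Delta_g+\mathrm{id})\psi_2$, $V_*^q=(-\eps^2\Delta_g+\mathrm{id})\psi_1$ the two integrals above become $\int_\M\psi_2 E_1\,dv_g+\int_\M\psi_1 E_2\,dv_g$.

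Next I compute and size $E_1$ in the normal coordinates $\Psi_p$. Because $(U,V)$ solves \eqref{es}, the rescaled pair satisfies $-\eps^2\Delta U_\eps+U_\eps=V_\eps^q$ in $\rN$ (Euclidean $\Delta$). Expanding $-\eps^2\Delta_g(\phi_R U_\eps)$ via \eqref{LapBel1} and absorbing into the error every term carrying a derivative of $\phi_R$ — these are supported in $\{\tfrac R2\le|x|\le R\}$, where $U_\eps,\nabla U_\eps=O(e^{-cR/\eps})$ by the exponential decay of $(U,V)$ — gives $E_1=\eps^2(\Delta_g-\Delta)(\phi_R U_\eps)+O(e^{-c/\eps})$, and symmetrically for $E_2$. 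Using $g^{ij}-\delta_{ij}=O(|x|^2)$ and $\Gamma^k_{ij}=O(|x|)$ from \eqref{metricexpansion}, \eqref{eqChristoffel}, the decay of $(U,V)$, and the substitution $x=\eps y$, this yields $\|E_1\|_{L^\beta(\M)}=O(\eps^{2+N/\beta})$ and $\|E_2\|_{L^\alpha(\M)}=O(\eps^{2+N/\alpha})$.

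Now split $\psi_2=V_*+T_\eps E_2$ and $\psi_1=U_*+T_\eps E_1$. The flat $L^s\to L^r$ mapping bounds for $(-\eps^2\Delta+\mathrm{id})^{-1}$ (transferred to $\M$ by a parametrix/partition of unity, with the exact $\eps$‑powers: $\|T_\eps f\|_{L^r}\lesssim\eps^{-N(1/s-1/r)}\|f\|_{L^s}$, admissible here since $p+1<\beta^*$ and $q+1<\alpha^*$ by \eqref{eqExponent}) give $\|\psi_1-U_*\|_{L^{p+1}}=O(\eps^{2+N/(p+1)})$ and $\|\psi_2-V_*\|_{L^{q+1}}=O(\eps^{2+N/(q+1)})$; Hölder's inequality then makes $\int_\M(\psi_2-V_*)E_1$ and $\int_\M(\psi_1-U_*)E_2$ of order $\eps^{N+4}$, hence negligible. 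It therefore remains to prove
\[
\int_\M V_* E_1\,dv_g+\int_\M U_* E_2\,dv_g=o(\eps^{N+2}).
\]

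This last step is the heart of the argument and the step I expect to be the main obstacle. Passing to normal coordinates, integrating by parts to move one derivative off $(\Delta_g-\Delta)(\phi_R U_\eps)$, expanding $g^{ij}$ and $\sqrt g$ by \eqref{metricexpansion}, and rescaling $x=\eps y$, the contributions of order $\eps^{N+2}$ are of two kinds: a ``full Riemann'' piece proportional to $\int_{\rN}\langle\mathcal R(y,e_i)y,e_j\rangle\,\partial_iV\,\partial_jU\,dy$, and a ``Ricci'' piece proportional to $\int_{\rN}\mathrm{Ric}_{il}\,y_l\,V\,\partial_iU\,dy$ together with its $U\leftrightarrow V$ counterpart. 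Since $U,V$ are radial, $\nabla U$ and $\nabla V$ are parallel to $y$, and the antisymmetry $\langle\mathcal R(X,Y)Z,W\rangle=-\langle\mathcal R(X,Y)W,Z\rangle$ forces $y_iy_j\langle\mathcal R(y,e_i)y,e_j\rangle\equiv0$, so the Riemann piece vanishes identically; the Ricci pieces are then reduced, after symmetrizing in $U\leftrightarrow V$, to $\tfrac13\int_{\rN}\mathrm{Ric}_{il}\,y_l\,\partial_i(UV)\,dy$, which one evaluates using the sphere‑average identity $\int y_iy_l\,f(|y|)\,dy=\tfrac1N\delta_{il}\int|y|^2 f(|y|)\,dy$, a further integration by parts, and the divergence/conservation relations attached to \eqref{es} (in particular $\int_{\rN}U^{p+1}=\int_{\rN}V^{q+1}$, obtained by testing \eqref{es} against $V$ and $U$). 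Isolating precisely which of these curvature terms genuinely persist at order $\eps^{N+2}$ and verifying that the surviving combination is $o(\eps^{N+2})$ is the delicate point; every remaining estimate is a rescaling‑and‑exponential‑decay argument of the type already used for the entire problem.
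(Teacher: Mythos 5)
Your route diverges from the paper's at the crucial point. The paper stays in the crude duality and Hölder framework: it introduces the same residuals $\eta_\eps=T_1V_*^q-U_*$, $\xi_\eps=T_2U_*^p-V_*$, estimates $\|V_*^q-(-\eps^2\Dm+\mathrm{id})U_*\|_{L^\beta(\M)}=O(\eps^{Nq/(q+1)+2})$ as you do, but then asserts $\|\eta_\eps\|_{L^{p+1}(\M)}\le c\|\eta_\eps\|_{W^{2,\beta}(\M)}\le c\|V_*^q-(-\eps^2\Dm+\mathrm{id})U_*\|_{L^\beta(\M)}$ with a constant independent of $\eps$, and closes via Hölder using $Np/(p+1)+Nq/(q+1)>N$. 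This elliptic estimate is exactly the one you refuse to use: for the $\eps$-scaled resolvent $(-\eps^2\Dm+\mathrm{id})^{-1}$ the correct $L^\beta\to L^{p+1}$ bound degrades by $\eps^{-N(1/\beta-1/(p+1))}$ (your scaling argument), which replaces $O(\eps^{Nq/(q+1)+2})$ by $O(\eps^{N/(p+1)+2})$ and turns the final Hölder bound from $o(\eps^{N+2})$ into a mere $O(\eps^{N+2})$. You are right to flag this and to push deeper by self-adjointness into $\int_\M\psi_2E_1+\psi_1E_2\,dv_g$ and then $\int_\M V_*E_1+U_*E_2\,dv_g$, whose computation requires curvature, not just Sobolev norms.

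However, the ``delicate point'' you defer is not just delicate: the cancellation you hope for does not occur. Writing $E_1=\eps^2(\Dm-\Delta)U_*+O(e^{-c/\eps})$, $E_2=\eps^2(\Dm-\Delta)V_*+O(e^{-c/\eps})$ and integrating by parts once (using $\int_\M V\Dm U\,dv_g=-\int_\M\langle\nabla_gU,\nabla_gV\rangle_g\,dv_g$ and Euclidean IBP for the $\Delta$ piece) gives
\begin{equation*}
\int_\M\!\left(V_*E_1+U_*E_2\right)dv_g
=-2\eps^2\!\int_{B(0,R)}\!(g^{ij}-\delta_{ij})\,\partial_iU_*\,\partial_jV_*\sqrt{g}\,dx
+\eps^2\!\int_{B(0,R)}\!\partial_j(U_*V_*)\,\partial_j\sqrt{g}\,dx+O(e^{-c/\eps}).
\end{equation*}
As you observe, the first (full Riemann) piece vanishes at order $\eps^{N+2}$ for radial $(U,V)$ by the antisymmetry of $\mathcal{R}$. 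But the second (Ricci) piece does not: with $\partial_j\sqrt{g}(\eps y)=-\tfrac{\eps}{3}\mathrm{Ric}_{jl}\,y_l+O(\eps^2)$, the rescaling $x=\eps y$ and $\int_{\rN}y_l\,\partial_j(UV)\,dy=-\delta_{jl}\int_{\rN}UV\,dy$ give
\begin{equation*}
\eps^2\!\int\partial_j(U_*V_*)\,\partial_j\sqrt{g}\,dx
=\frac{\eps^{N+2}\,S(p)}{3}\int_{\rN}U\,V\,dy+o(\eps^{N+2}),
\end{equation*}
and $\int_{\rN}UV\,dy>0$. So $\int_\M V_*E_1+U_*E_2\,dv_g$ is $O(\eps^{N+2})$ with a nonzero coefficient whenever $S(p)\ne0$, and the lemma's claim of $o(\eps^{N+2})$ fails. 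One can cross-check with the scalar case $U=V$, $p=q$: the Pohozaev identity $\int_{\rN}|y|^2(|\nabla U|^2+U^2)=N\int_{\rN}U^2+\int_{\rN}|y|^2U^{p+1}$ carries exactly this extra $N\int U^2$, matching the surviving Ricci contribution. In short: your decomposition is correct and genuinely sharper than the paper's, but the proof as proposed cannot be completed because the step you identify as the heart of the argument produces a curvature term that does not cancel; any fix would have to carry an explicit $\eps^{N+2}\frac{S(p)}{3}\int_{\rN}UV$ correction into Lemma~\ref{maxpta} and Propositions~\ref{UEEa}, \ref{LEEa}.
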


\begin{proof}
	We divide the proof into two steps:\\
	\textbf{Step 1:}
	We first estimate the following error terms. Let
	\begin{equation}\label{t1t2}
		\begin{aligned}
			\mathit{T_{1}}V_{*}^{q}&=U_{*}+ \eta_{\eps}, \quad \text{ and } \quad
			\mathit{T_{2}}U_{*}^{p}&=V_{*} + \xi_{\eps} \quad \text{ in } \M.
		\end{aligned}
	\end{equation}
	Solving for $\eta_{\eps}$, we have
	\begin{equation*}
		\begin{aligned}
			\left( -\eps^{2}\Dm +\mathit{id}\right)\eta_{\eps}(x)&=V_{*}^{q}(x)-\left( -\eps^{2}\Dm +\mathit{id}\right)U_{*}(x) \quad \text{ in } \M.
		\end{aligned}
	\end{equation*}
	In local coordinates, using \eqref{LapBel1}, we get
	\begin{equation*}
		\Dm\left( \phi_{R}\(\Psi_{p}(x)\)U_{\eps}\(\Psi_{p}(x)\)\right)=\left(\Delta -(g_{p}^{ij}(x)+\delta_{ij})\partial_{ij} +g_{p}^{ij}(x)\Gamma_{ij}^{k}\partial_{k} \right)\left(\phi_{R}(x)U_{\eps}(x)\right).
	\end{equation*}
	This gives,
	\begin{equation*}
		\begin{aligned}
			&\left\Vert V_{*}^{q}(x)-\left( -\eps^{2}\Dm +\mathit{id}\right)U_{*}( x)\right\Vert_{L^{\tfrac{q+1}{q}}(\M)}^{\tfrac{q+1}{q}}\\
			&=  \int_{B_{g}(p_{\eps},R)}\left| \phi_{R}^{q}\(\Psi_{p}(x)\)V^{q}_{\eps}\(\Psi_{p}(x)\)-\left( -\eps^{2}\Dm +\mathit{id}\right)\phi_{R}\(\Psi_{p}(x)\)U_{\eps}\(\Psi_{p}(x)\)\right| ^{\tfrac{q+1}{q}}dv_g\\
			&= \int_{B(0,R)}\left| \phi_{R}^{q}V^{q}_{\eps}-\left( -\eps^{2}\left(\Delta +(g_{p}^{ij}(x)-\delta_{ij})\partial_{ij} -g_{p}^{ij}(x)\Gamma_{ij}^{k}(x)\partial_{k} \right) \(\phi_{R}U_{\eps}\)+\phi_{R}U_{\eps}\right)\right| ^{\tfrac{q+1}{q}}\sqrt{g_{p}(x)} dx\\
			&= \int_{B(0,R)}\left| \phi_{R}^{q}V^{q}_{\eps}-\(-\eps^{2}\Delta \(\phi_{R}U_{\eps}\)+\phi_{R}U_{\eps}\)+ \eps^{2}\left((g_{p}^{ij}(x)-\delta_{ij})\partial_{ij} -g_{p}^{ij}(x)\Gamma_{ij}^{k}(x)\partial_{k} \right) \(\phi_{R}U_{\eps}\) \right| ^{\tfrac{q+1}{q}}\sqrt{g_{p}(x)} dx\\
			&= \eps^N\int_{B(0,R/\eps)}| \phi_{R}^{q}(\eps y)V^{q}(y)-(-\Delta +id)\(\phi_{R}(\eps y)U(y)\)+(g_{p}^{ij}(\eps y)-\delta_{ij})\partial_{ij}\(\phi_{R}(\eps y)U(y)\)\\
			&\hspace{2cm}-\eps g_{p}^{ij}(\eps y)\Gamma_{ij}^{k}(\eps y)\partial_{k}\(\phi_{R}(\eps y)U(y)\) | ^{\tfrac{q+1}{q}}\sqrt{g_{p}(\eps y)} dy.
		\end{aligned}
	\end{equation*}
	This implies,
	\begin{equation*}
		\left\Vert V_{*}^{q}(x)-\left( -\eps^{2}\Dm +\mathit{id}\right)U_{*}( x)\right\Vert_{L^{\tfrac{q+1}{q}}(\M)} \le  I_1+I_2+I_3,
	\end{equation*}
	where 
	\begin{equation*}
		\begin{aligned}
			I_1:&=\eps^{\frac{Nq}{q+1}} \Big[\int_{B(0,R/\eps)} \abs{\phi_{R}^{q}(\eps y)V^{q}(y)-[(-\Delta+id) \(\phi_{R}(\eps y)U(y\))]}^{\tfrac{q+1}{q}}\sqrt{g_{p}(\eps y)} dy\Big]^{\frac{q}{q+1}},\\
			I_2:&=\eps^{\frac{Nq}{q+1}} \Big[\int_{B(0,R/\eps)} \abs{\left((g_{p}^{ij}(\eps y)-\delta_{ij})\partial_{ij} \right)\(\phi_{R}(\eps y)U(y)\)}^{\tfrac{q+1}{q}}\sqrt{g_{p}(\eps y)} dy\Big]^{\frac{q}{q+1}},\\
			I_3:&=\eps^{\frac{Nq}{q+1}+1} \Big[\int_{B(0,R/\eps)} \abs{\left(g_{p}^{ij}(\eps y)\Gamma_{ij}^{k}(\eps y)\partial_{k} \right)\(\phi_{R}(\eps y)U(y)\)}^{\tfrac{q+1}{q}}\sqrt{g_{p}(\eps y)} dy\Big]^{\frac{q}{q+1}}.
		\end{aligned}
	\end{equation*}
	\textbf{Estimate for $I_1$, $I_{2}$ and $I_{3}$:} 
	We have that
	\begin{align*}
		&\phi_{R}^{q}(\eps y)V^{q}(y)+\Delta \(\phi_{R}(\eps y)U(y\))-\phi_{R}(\eps y)U(y)\\
		&=\phi_{R}^{q}(\eps y)V^{q}(y)+\De U(y)\phi_R(\eps y)+U(y) \De \phi_R(\eps y) +2\grad U(y))\grad \phi_R(\eps y)-\phi_{R}(\eps y)U(y)\\
		&=\phi_{R}^{q}(\eps y)V^{q}(y)-\(-\De U(y)+U(y)\)\phi_R(\eps y)+U(y) \De \phi_R(\eps y) +2\grad U(y)\grad \phi_R(\eps y)\\
		&=[\phi_{R}^{q}(\eps y)-\phi_{R}(\eps y)]V^{q}(y)+U(y) \De \phi_R(\eps y)+2\grad U(y)\grad \phi_R(\eps y).
	\end{align*}
	Now from the exponential decay of $(U, V)$, we conclude that
	\begin{equation*}
		I_1=O(exp\left(-\tfrac{c}{\eps}\right)), \,\text{for some } c>0.
	\end{equation*}
	And using \eqref{metricexpansion}, we have that
	\begin{equation*}
		I_2:=\eps^{\frac{Nq}{q+1}} \Big[\int_{B(0,R/\eps)} \abs{(g_{p}^{ij}(\eps y)-\delta_{ij})\partial_{ij} \(\phi_{R}(\eps y)U(y)\)}^{\tfrac{q+1}{q}}\sqrt{g_{p}(\eps y)} dy\Big]^{\tfrac{q}{q+1}}=O(\eps^{\frac{Nq}{q+1}+2}).
	\end{equation*}
	Similarly, we use \eqref{eqChristoffel} to get
	\be I_3:=\eps^{\frac{Nq}{q+1}+1} \Big[\int_{B(0,R/\eps)} \abs{\left(g_{p}^{ij}(\eps y)\Gamma_{ij}^{k}(\eps y)\partial_{k} \right)\(\phi_{R}(\eps y)U(y)\)}^{\tfrac{q+1}{q}}\sqrt{g_{p}(\eps y)} dy\Big]^{\frac{q}{q+1}}=O(\eps^{\frac{Nq}{q+1}+2}).\nee
	From Sobolev Embedding and $L_{p}$-estimate, we get the estimate
	\begin{equation*}
		\begin{aligned}
			\left\Vert\eta_{\eps}\right\Vert_{L^{p+1}(\M)}\leq c\left\Vert \eta_{\eps}\right\Vert_{W^{2,\tfrac{q+1}{q}}(\M)}\leq c\left\Vert V_{*}^{q}(x)-\left( -\eps^{2}\Dm +\mathit{id}\right)U_{*}( x)\right\Vert_{L^{\tfrac{q+1}{q}}(\M)}=O(\eps^{\tfrac{Nq}{q+1}+2}).
		\end{aligned}
	\end{equation*} 
	Similarly we can obtain an estimate for $\xi_{\eps}$, i.e.,
	\begin{equation*}
		\left\Vert\xi_{\eps}\right\Vert_{L^{q+1}(\M)}\leq c\left\Vert\xi_{\eps}\right\Vert_{W^{2,\frac{p+1}{p}}(\M)}=O(\eps^{\tfrac{Np}{p+1}+2}).
	\end{equation*}
	\textbf{Step 2:  }
	Using \eqref{t1t2} we get
	\begin{equation*}
		\int_{\M}\left( U_{*}^{p}\mathit{T_{1}}V_{*}^{q}+V_{*}^{q}\mathit{T_{2}}U_{*}^{p}\right) dv_g=\int_{\M} \left( U_{*}^{p+1}+V_{*}^{q+1}\right)  dv_g+\int_{\M}\left( U_{*}^{p} \eta_{\eps}+V_{*}^{q} \xi_{\eps}\right) dv_g.
	\end{equation*}
	Now from \cref{UE2a}, we have
	\begin{equation*}
		\begin{aligned}
			\int_{\M} \abs{U_{*}^{p} \eta_{\eps}}dv_g\leq\|U_{*}\|_{L^{p+1}(\M)}^{p}\|\eta_{\eps}\|_{L^{p+1}(\M)}=O(\eps^{\tfrac{Np}{p+1}+\tfrac{Nq}{q+1}+2}).
		\end{aligned}
	\end{equation*}
	We see that
	\begin{equation*}
		\begin{aligned}
			\dfrac{Np}{p+1}+\dfrac{Nq}{q+1}=N+N-\dfrac{N}{p+1}-\dfrac{N}{q+1}>N,
		\end{aligned}
	\end{equation*}
	since
	\begin{equation*}
		\begin{aligned}
			N- \dfrac{N}{p+1}-\dfrac{N}{q+1}>0 \quad \text{ for } pq>1.
		\end{aligned}
	\end{equation*}
	Using the above estimates, we get
	\begin{equation*}
		\left| \int_{\M} U_{*}^{p}\eta_{\eps}\right| =o(\eps^{N+2}), \quad \text{ and } \left| \int_{\M} V_{*}^{q}\xi_{\eps}\right|  =o(\eps^{N+2}).
	\end{equation*}	
\end{proof}

\begin{lemma}[Maximum Point of the MP curve]\label{maxpta}
	For each $\eps>0$ sufficiently small, $h_{\eps}(t):=I_{\eps}(tw)$ attains a unique positive maximum at $t=t^*_\eps>0$ and
	\begin{equation}\label{MaxPta}
		t^*_\eps=1+a\eps^{2}+o(\eps^{2}),
	\end{equation}
	as $\eps \to 0$, for some constant $a$.
\end{lemma}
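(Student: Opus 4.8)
The plan is to study $h_\eps'$ directly and reduce its vanishing to a monotone scalar equation. Write $A_\eps:=\int_\M U_*^{p+1}\,dv_g$, $B_\eps:=\int_\M V_*^{q+1}\,dv_g$ and $C_\eps:=\int_\M\bigl(U_*^p T_1 V_*^q+V_*^q T_2 U_*^p\bigr)\,dv_g$, so that $h_\eps(t)=\tfrac{p}{p+1}t^{(p+1)/p}A_\eps+\tfrac{q}{q+1}t^{(q+1)/q}B_\eps-\tfrac{t^2}{2}C_\eps$ and $h_\eps'(t)=t^{1/p}A_\eps+t^{1/q}B_\eps-t\,C_\eps$; all three constants are positive once $\eps$ is small (for $C_\eps$ this follows from Lemma \ref{UE1a}, which gives $C_\eps=A_\eps+B_\eps+o(\eps^{N+2})$). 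For $t>0$, dividing by $t$, a positive critical point of $h_\eps$ solves $\Phi_\eps(t)=C_\eps$ with $\Phi_\eps(t):=t^{1/p-1}A_\eps+t^{1/q-1}B_\eps$. Since $\tfrac1p-1<0$ and $\tfrac1q-1<0$, the map $\Phi_\eps$ is continuous and strictly decreasing from $+\infty$ to $0$ on $(0,\infty)$, hence $\Phi_\eps(t)=C_\eps$ has a unique solution $t^*_\eps>0$; as $h_\eps'(t)=t\bigl(\Phi_\eps(t)-C_\eps\bigr)$ is $>0$ on $(0,t^*_\eps)$ and $<0$ on $(t^*_\eps,\infty)$, the function $h_\eps$ is strictly increasing and then strictly decreasing, so it attains a unique positive maximum precisely at $t^*_\eps$.

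For the asymptotic value of $t^*_\eps$ I would rescale by $\eps^N$. By Lemmas \ref{UE2a} and \ref{UE1a}, $\eps^{-N}A_\eps\to a_0:=\int_{\rN}U^{p+1}>0$, $\eps^{-N}B_\eps\to b_0:=\int_{\rN}V^{q+1}>0$ and $\eps^{-N}C_\eps\to a_0+b_0$. Set $\widetilde\Phi_\eps:=\eps^{-N}\Phi_\eps$; then $\widetilde\Phi_\eps\to\widetilde\Phi_0$ uniformly on compact subsets of $(0,\infty)$, where $\widetilde\Phi_0(t):=a_0 t^{1/p-1}+b_0 t^{1/q-1}$ is a strictly decreasing homeomorphism of $(0,\infty)$ onto itself with $\widetilde\Phi_0(1)=a_0+b_0$. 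A squeezing argument (choose $0<\delta<1<T$ with $\widetilde\Phi_0(\delta)>2(a_0+b_0)$ and $\widetilde\Phi_0(T)<\tfrac12(a_0+b_0)$, so that $t^*_\eps\in(\delta,T)$ for small $\eps$ by uniform convergence and monotonicity of $\widetilde\Phi_\eps$), combined with $\widetilde\Phi_0(t^*_\eps)=\widetilde\Phi_\eps(t^*_\eps)+o(1)=\eps^{-N}C_\eps+o(1)\to a_0+b_0$ and injectivity of $\widetilde\Phi_0$, forces $t^*_\eps\to1$.

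Finally I would expand around $t=1$. Put $s_\eps:=t^*_\eps-1\to0$; from $(t^*_\eps)^{1/p-1}A_\eps+(t^*_\eps)^{1/q-1}B_\eps=C_\eps$ and $(1+s)^{1/p-1}=1+(\tfrac1p-1)s+O(s^2)$ (and likewise for the $q$-exponent) one obtains
\begin{equation*}
	\bigl(A_\eps+B_\eps-C_\eps\bigr)+s_\eps\Bigl[\bigl(\tfrac1p-1\bigr)A_\eps+\bigl(\tfrac1q-1\bigr)B_\eps\Bigr]+O\bigl(s_\eps^2\bigr)\bigl(A_\eps+B_\eps\bigr)=0.
\end{equation*}
By Lemma \ref{UE1a} the first term is $o(\eps^{N+2})$, while the coefficient of $s_\eps$ equals $\eps^N\bigl(D_0+o(1)\bigr)$ with $D_0:=-\tfrac{p-1}{p}a_0-\tfrac{q-1}{q}b_0<0$; dividing by $\eps^N$ gives $\bigl(D_0+o(1)+O(s_\eps)\bigr)s_\eps=o(\eps^2)$, and since $D_0\neq0$ and $s_\eps\to0$ we conclude $s_\eps=o(\eps^2)$. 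Thus $t^*_\eps=1+a\eps^2+o(\eps^2)$ holds — in fact one may take $a=0$, the curvature corrections of $A_\eps$ and $B_\eps$ supplied by Lemma \ref{UE2a} cancelling against the linear term once the $\eps^2$-order identity $C_\eps=A_\eps+B_\eps+o(\eps^{N+2})$ is used. The only genuine subtlety is the $\eps$-bookkeeping: one must know that the perturbation $C_\eps-A_\eps-B_\eps$ driving the critical-point equation is of strictly higher order, $o(\eps^{N+2})$, than the $\eps^N$-sized coefficient of $s_\eps$, and that the limiting equation is nondegenerate ($D_0\neq0$), which is what makes solving for $s_\eps$ stable; preventing $t^*_\eps$ from drifting to $0$ or $\infty$ is the remaining, routine, point.
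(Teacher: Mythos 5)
Your argument is correct and reaches a slightly sharper conclusion than the lemma asserts, by a route that genuinely differs from the paper's. For uniqueness of the maximum, you reduce the critical-point equation $h_\eps'(t)=0$ (for $t>0$) to the monotone scalar equation $\Phi_\eps(t)=C_\eps$ with $\Phi_\eps(t)=t^{1/p-1}A_\eps+t^{1/q-1}B_\eps$; this is a crisp improvement over the paper's appeal to the sign of $h_\eps'$ near $0$, its behaviour as $t\to\infty$, and the monotonicity of $h_\eps''$. For the asymptotics, the paper rescales to $\sigma(\eps,t)$, extends it continuously to $\eps=0$, applies the Implicit Function Theorem at $(0,1)$ to obtain a $C^1$ branch $t(\eps)$ with $t(0)=1$, and then Taylor-expands $t(\eps)$; you instead prove $t^*_\eps\to1$ by a squeezing argument on $\eps^{-N}\Phi_\eps$ and then expand the scalar equation directly around $t=1$. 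Your version is more elementary, avoids the IFT entirely, and — because you track the size of the perturbation $C_\eps-A_\eps-B_\eps=o(\eps^{N+2})$ from Lemma \ref{UE1a} against the coefficient of $s_\eps$, which is $\eps^N(D_0+o(1))$ with $D_0\neq0$ — it actually yields the stronger statement $t^*_\eps=1+o(\eps^2)$, i.e.\ $a=0$. This is consistent with the paper: the lemma only claims $t^*_\eps=1+a\eps^2+o(\eps^2)$ for \emph{some} $a$, the paper's proof never identifies $a$, and inspection of \eqref{Maxpt1} shows the downstream energy expansion is insensitive to its value since the $a\eps^2$ contributions from $\tfrac{p}{p+1}(t^*_\eps)^{(p+1)/p}$ and $\tfrac12(t^*_\eps)^2$ cancel. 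Both proofs hinge on exactly the same two ingredients: the nondegeneracy $(\tfrac1p-1)A_0+(\tfrac1q-1)B_0<0$ (your $D_0$, the paper's $\sigma_t(0,1)$), and the $o(\eps^{N+2})$ cancellation supplied by Lemma \ref{UE1a}.
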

\begin{proof}
	First we show that $h_{\eps}(t)$ has a unique positive maximum. Differentiating with respect to $t$ we have,
	\begin{equation*}
		\begin{aligned}
			h_{\eps}^{\prime}(t)&=t^{\tfrac{1}{p}}\int_{\M}U_{*}^{p+1}(x)dv_g+t^{\tfrac{1}{q}}\int_{\M}V_{*}^{q+1}(x)dv_g
			-t\int_{\M}\left( U_{*}^{p}\mathit{T_{1}}V_{*}^{q}+V_{*}^{q}\mathit{T_{2}}U_{*}^{p}\right) dv_g.
		\end{aligned}
	\end{equation*}
	We first note that $h_{\eps}^{\prime}(0)=0$, $h_{\eps}^{\prime}(t)>0$ for $t$ small and $h_{\eps}^{\prime}(t)\to-\infty$ as $t\to\infty$.
	Also if we look at the second derivative
	\begin{equation*}
		\begin{aligned}
			h''_{\eps}(t)&=\frac{t^{\tfrac{1}{p}-1}}{p}\int_{\M}U_{*}^{p+1}(x)dv_g+\frac{t^{\tfrac{1}{q}-1}}{q}\int_{\M}V_{*}^{q+1}(x)dv_g
			-\int_{\M}\left( U_{*}^{p}\mathit{T_{1}}V_{*}^{q}+V_{*}^{q}\mathit{T_{2}}U_{*}^{p}\right) dv_g,
		\end{aligned}
	\end{equation*}
	is a decreasing function in $t$. Hence $h_\eps(t)$ has a unique positive maximum say at $t=t^*_\eps.$ Let us now define $\sigma(\eps,t):=\eps^{N}h_{\eps}^{\prime}(t)$. Then from \cref{UE2a} and \cref{UE1a} we have
	\begin{align}\label{maxptt}
		\sigma(\eps, t)&=t^{\tfrac{1}{p}}\left( A_{0}+\eps^{2} A_{1}+o(\eps^{2})\right)+ t^{\tfrac{1}{q}}\left( B_{0}+\eps^{2} B_{1}+o(\eps^{2})\right)
		-t\left( A_{0}+B_{0}+\eps^{2} (A_{1}+B_{1})+o(\eps^{2})\right)
		\notag\\
		&=\big(t^{\tfrac{1}{p}}-t\big)A_{0}+\big(t^{\tfrac{1}{q}}-t\big) B_{0}+\eps^{2}\(\big(t^{\tfrac{1}{p}}-t\big)A_{1}+\big(t^{\tfrac{1}{q}}-t\big)B_{1}\)+ o(\eps^{2}),
	\end{align}
	where
	\begin{align*}
		&A_{0}:=\int_{\rN}U^{p+1}\(y\)dy, \qquad && B_0:=\int_{\rN}V^{q+1}\(y\)dy,\\
		&A_1:=\dfrac{1}{6}\dfrac{S(p)}{N}\int_{\rN}U^{p+1}\(y\)|y|^{2}dy,\qquad && B_1:= \dfrac{1}{6}\dfrac{S(p)}{N}\int_{\rN}V^{q+1}\(y\)|y|^{2}dy.
	\end{align*}
	In this expression it is worth noting that, $o(\eps)$ is uniform in $t$ on each compact interval, which allows us to extend $\sigma(\eps, t)$ up to $\eps=0$ as a continuously differentiable function on $[{0,\eps_{*}})\times [{0,\infty})$ for some $\eps_{*}>0$. Now we see here, 
	\begin{equation*}
		\sigma(0, 1)=0, \qquad \sigma_\eps(0, 1)=0, \qquad \sigma_t(0, 1)=\left(\frac{1}{p}-1 \right)A_{0}+ \left(\frac{1}{q}-1 \right)B_{0} <0.
	\end{equation*}
	Then by the Implicit Function Theorem we have a $C^{1}$ function $t(\eps)$ defined for $\eps \in [{0, \eps_{**}})$ with $\eps_{**}>0$ sufficiently small such that $\sigma(\eps, t(\eps))=0$ and $t(0)=1$. Now expanding $\xi(\eps):=\sigma(\eps, t(\eps))$ around $0$ we get
	\be \xi(\eps)=\xi(0)+\eps(\sigma_\eps(0,1)+\sigma_t(0,1)t'(0))+\eps^2\xi''(0)+o(\eps^2).\nee This gives us \be t'(0)=c\eps+o(\eps),\nee for some constant $c$ as $\sigma_t(0,1)\neq 0$. And finally, using Taylor's expansion for $t(\eps)$ around $0$, we get the expression \eqref{MaxPta} for the maximum point.
\end{proof}

Finally, we have the following upper energy estimate:
\begin{prop}[Upper Energy Estimate]\label{UEEa}
	\begin{equation*}
		c_{\eps} \leq \eps^{N}\left[ I_{\infty}(U^{p}, V^{q})-\eps^{2}\dfrac{S(p)}{6N}\(\dfrac{p-1}{2(p+1)}\int_{\rN}U^{p+1}(y)|y|^{2}dy-\dfrac{q-1}{2(q+1)}\int_{\rN}V^{q+1}\(y\)|y|^{2}dy\)+ o(\eps^{2})\right],	
	\end{equation*}
	where $S(p)$ is the Scalar curvature of $\M$ at $p \in \M$.
\end{prop}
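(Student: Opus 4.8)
The plan is to use the mountain-pass characterization of $c_\eps$: since $t \mapsto h_\eps(t) = I_\eps(tw)$ attains its maximum at $t_\eps^*$ (by \cref{maxpta}), and since the path $t \mapsto t\,w$ (suitably extended to reach the negative sublevel set) lies in $\Gamma$, we obtain $c_\eps \le \max_{t \ge 0} I_\eps(tw) = h_\eps(t_\eps^*)$. The whole work then reduces to expanding $h_\eps(t_\eps^*)$ in powers of $\eps$.

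\medskip

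First I would substitute the expansions from \cref{UE2a} and \cref{UE1a} into the formula for $h_\eps(t)$, writing
\[
h_\eps(t) = \eps^N\Big[\tfrac{pt^{(p+1)/p}}{p+1}(A_0 + \eps^2 A_1) + \tfrac{qt^{(q+1)/q}}{q+1}(B_0 + \eps^2 B_1) - \tfrac{t^2}{2}\big(A_0 + B_0 + \eps^2(A_1+B_1)\big) + o(\eps^2)\Big],
\]
with $A_0, A_1, B_0, B_1$ as in the proof of \cref{maxpta}. Next I would plug in $t = t_\eps^* = 1 + a\eps^2 + o(\eps^2)$ from \cref{MaxPta}. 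The key simplification is that the first-order (in $a\eps^2$) variation of $h_\eps$ at $t=1$ vanishes: because $t_\eps^*$ is the critical point of $h_\eps$, the term proportional to $a$ drops out, so $h_\eps(t_\eps^*) = h_\eps(1) + o(\eps^N \cdot \eps^2)$; concretely, the $\eps^2$-coefficient of $h_\eps(t_\eps^*)/\eps^N$ is computed by evaluating the bracket at $t=1$. At $t=1$ one gets the $\eps^0$-term $\tfrac{p}{p+1}A_0 + \tfrac{q}{q+1}B_0 - \tfrac12(A_0+B_0) = (\tfrac12 - \tfrac1{p+1})A_0 + (\tfrac12 - \tfrac1{q+1})B_0 = I_\infty(U^p,V^q)$ (using part (ii) of the analogue of \cref{comp.dual.soln} for $I_\infty$), and the $\eps^2$-term $\tfrac{p}{p+1}A_1 + \tfrac{q}{q+1}B_1 - \tfrac12(A_1+B_1) = (\tfrac12 - \tfrac1{p+1})A_1 + (\tfrac12 - \tfrac1{q+1})B_1 = \tfrac{p-1}{2(p+1)}A_1 + \tfrac{q-1}{2(q+1)}B_1$.

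\medskip

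Substituting the explicit values $A_1 = \tfrac{1}{6}\tfrac{S(p)}{N}\int_{\rN} U^{p+1}|y|^2\,dy$ and $B_1 = \tfrac{1}{6}\tfrac{S(p)}{N}\int_{\rN} V^{q+1}|y|^2\,dy$ yields the $\eps^2$-coefficient
\[
\tfrac{S(p)}{6N}\Big(\tfrac{p-1}{2(p+1)}\int_{\rN}U^{p+1}|y|^2\,dy + \tfrac{q-1}{2(q+1)}\int_{\rN}V^{q+1}|y|^2\,dy\Big).
\]
Comparing with the target statement, this matches provided the sign in front of the second integral in the proposition is read as the claimed minus sign; I would double-check the sign bookkeeping here, since that is the one place the computation is genuinely delicate — a sign slip in $A_1, B_1$ or in the coefficient $\tfrac12 - \tfrac1{p+1}$ would flip it. The only other point needing care is justifying that the path $t \mapsto tw$ can be completed to an element of $\Gamma$, i.e. that $I_\eps(tw) \to -\infty$ as $t \to \infty$ (immediate from the $-\tfrac{t^2}{2}$ term dominating the sublinear-in-$t^2$ growth of the $t^{(p+1)/p}$ and $t^{(q+1)/q}$ terms, since $(p+1)/p, (q+1)/q < 2$), and that the $o(\eps^2)$ error is uniform in $t$ near $t_\eps^*$ — which is exactly the uniformity remark already recorded in the proof of \cref{maxpta}. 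The main obstacle, then, is not conceptual but purely the careful bookkeeping of the $\eps^2$-coefficient and its sign; everything else is a direct substitution.
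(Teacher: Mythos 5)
Your approach is exactly the paper's: bound $c_\eps$ by $\max_{t} I_\eps(tw)$ along the straight-line path, insert the expansions of Lemma~\ref{UE2a} and Lemma~\ref{UE1a}, plug in $t_\eps^* = 1 + a\eps^2 + o(\eps^2)$ from Lemma~\ref{maxpta}, and use that the $O(a\eps^2)$ variation cancels at the critical point (which the paper realizes through the algebraic identity $\tfrac{p}{p+1}(t_\eps^*)^{(p+1)/p} - \tfrac{(t_\eps^*)^2}{2} = \tfrac{p-1}{2(p+1)} + o(\eps^2)$). Your remark that the path can be extended into the negative sublevel set because $(p+1)/p, (q+1)/q < 2$ is also the paper's step, so the routes coincide.

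You were right to distrust the signs, and your instinct to stop and double-check is precisely the correct reaction here. Two separate things are going on. First, the ``$-$'' between the two integrals in the displayed statement of the proposition is a typo: running Lemma~\ref{UE2a} through the computation gives a ``$+$'', which is also what the Lower Energy Estimate (Proposition~\ref{LEEa}) has, and the subsequent proof of the main theorem only makes sense if the same quantity appears in both estimates. So your ``$+$'' is the right sign between the integrals, and the statement should read
\begin{equation*}
c_{\eps} \leq \eps^{N}\left[ I_{\infty}(U^{p}, V^{q})-\eps^{2}\dfrac{S(p)}{6N}\left(\dfrac{p-1}{2(p+1)}\int_{\rN}U^{p+1}|y|^{2}dy+\dfrac{q-1}{2(q+1)}\int_{\rN}V^{q+1}|y|^{2}dy\right)+ o(\eps^{2})\right].
\end{equation*}
Second, you took the expansion ``$A_0+\eps^2 A_1$'' from the notation set up in Lemma~\ref{maxpta}, but Lemma~\ref{UE2a} actually gives $\int_\M U_*^{p+1}\,dv_g = \eps^N(A_0 - \eps^2 A_1 + o(\eps^2))$ with $A_1 = \tfrac{1}{6}\tfrac{S(p)}{N}\int U^{p+1}|y|^2\,dy > 0$ (for $S(p)>0$); the ``$+$'' inside Lemma~\ref{maxpta}'s $\sigma(\eps,t)$ is another slip in the paper. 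Once you use the correct ``$A_0 - \eps^2 A_1$'', your $\eps^2$-coefficient at $t=1$ comes out as $-\bigl(\tfrac{p-1}{2(p+1)}A_1 + \tfrac{q-1}{2(q+1)}B_1\bigr)$, which is the required overall minus sign in the final display. With those two typographical corrections, your computation proves the proposition.
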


\begin{proof}
	Let $\gamma(t):=tw.$ Then we have
	\begin{equation*}
		\begin{aligned}
			I_{\eps}(\ga(t))&=\dfrac{pt^{\tfrac{p+1}{p}}}{p+1}\int_{\M}U_{*}^{p+1}(x)dv_g+\dfrac{qt^{\tfrac{q+1}{q}}}{q+1}\int_{\M}V_{*}^{q+1}(x)dv_g-\dfrac{t^{2}}{2}\int_{\M}\left( U_{*}^{p}\mathit{T_{1}}V_{*}^{q}+V_{*}^{q}\mathit{T_{2}}U_{*}^{p}\right) dv_g.
		\end{aligned}
	\end{equation*}
	From \cref{UE2a} and \cref{UE1a}, we note that $I_{\eps}(\ga(1))>0$ as $p,q>1$. And there exist $T_0>1$ such that $	I_{\eps}(\ga(T_0))<0$. On redefining $\ga(t)$ as $\ga(tT_0)$, we see that $I_{\eps}(\ga(1))<0$. From \eqref{lstenergy} and \cref{maxpta}, we get
	\begin{equation*}
		c_{\eps} \leq \max_{0 \leq t \leq T_0}I_{\eps}(tw)=I_{\eps}(t_{\eps}^{*}w).
	\end{equation*}
	Now let us calculate $I_\eps(t_{\eps}^{*}w)$.
	\begin{equation*}
		\begin{aligned}
			I_{\eps}(t^*_\eps w_{\eps})=&\left( \dfrac{p}{p+1}{t^*_\eps}^{\tfrac{p+1}{p}}-\dfrac{{t^*_\eps}^{2}}{2}\right) \int_{\M}U_{*}^{p+1}dv_g+\left( \dfrac{q}{q+1}{t^*_\eps}^{\tfrac{q+1}{q}}-\dfrac{{t^*_\eps}^{2}}{2}\right)\int_{\M}V_{*}^{q+1}dv_g+o(\eps^{N+2}).
		\end{aligned}
	\end{equation*}
	We have
	\begin{equation}\label{Maxpt1}
		\begin{aligned}
			\dfrac{p}{p+1}{t^*_\eps}^{\tfrac{p+1}{p}}-\dfrac{{t^*_\eps}^{2}}{2}&=\dfrac{p}{p+1}\left( 1+\dfrac{p+1}{p}a\eps^{2}+o(\eps^{2})\right) -\dfrac{1}{2}\left(1+2a\eps^{2}+o(\eps^{2})\right)\\
			&=\dfrac{p-1}{2(p+1)}+o(\eps^{2}).\\
			\text{Similarly, } \quad \dfrac{q}{q+1}{t^*_\eps}^{\tfrac{q+1}{q}}-\dfrac{{t^*_\eps}^{2}}{2}&=\dfrac{q-1}{2(q+1)}+o(\eps^{2}).
		\end{aligned}
	\end{equation}
	Using the expansions in \cref{UE2a} we finally get
	\begin{equation*}
		\begin{aligned}
			I_{\eps}(t^*_\eps  w_{\eps})
			&=\eps^{N}\left[ \left(\dfrac{p-1}{2(p+1)}\right) \int_{\rN}U^{p+1}\(y\)dy+\left(\dfrac{q-1}{2(q+1)}\right) \int_{\rN}V^{q+1}\(y\)dy\right.\\ &\left.-\eps^{2}\dfrac{S(p)}{6N}\(\dfrac{p-1}{2(p+1)}\int_{\rN}U^{p+1}(y)|y|^{2}dy-\dfrac{q-1}{2(q+1)} \int_{\rN}V^{q+1}(y)|y|^{2}dy\)+ o(\eps^{2})\right]\\
			&=\eps^{N}\left[ I_{\infty}(U^{p}, V^{q})-\eps^{2}\dfrac{S(p)}{6N}\(\dfrac{p-1}{2(p+1)}\int_{\rN}U^{p+1}(y)|y|^{2}dy-\dfrac{q-1}{2(q+1)} \int_{\rN}V^{q+1}(y)|y|^{2}dy\)+ o(\eps^{2})\right],
		\end{aligned}
	\end{equation*}
	which gives us the upper energy estimate.
\end{proof}

\section{Asymptotic Profile of the Least energy solution}

In this section, we establish several key results concerning the least energy solutions associated with the energy level $c_\eps$. We consider a solution $(u_\eps,v_\eps)\in W^{2,\beta}(\M)\times W^{2,\al}(\M)$ to \eqref{eq0}, corresponding to a least energy critical point $(w_1,w_2)\in X^*$ of the dual functional $I_\eps$ and investigate several qualitative properties of these solution, namely the number of local maxima and their asymptotic behavior. Remarkably, we demonstrate that for any least energy solution, the local maxima of both $u_{\eps}$ and $v_{\eps}$ occur at a common point. To analyze the asymptotic profile, we rescale the solution around its local maximum and establish $C^2_{loc}$ convergence of the rescaled functions as the perturbation parameter goes to zero.

We begin with the following result, which provides an $L^\infty$ bound for a least energy solution:

\begin{lemma}\label{LinfBound}
	Let $(u_{\eps}, v_{\eps})$ be a least energy solution of $\eqref{eq0}$. Then there exists $\eps_{0}>0$ such that if $\eps \in (0,\eps_{0})$, then
	\begin{equation*}
		\max\{\|u_{\eps}\|_{L^{\infty}}, \|v_{\eps}\|_{L^{\infty}}\} \leq K,
	\end{equation*}
	where $K>0$ is independent of $\eps$.
\end{lemma}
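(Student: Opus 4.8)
The strategy is a bootstrap/iteration argument combined with the scaling structure of the problem, ultimately reducing to a blow-up (concentration-compactness) contradiction that uses the upper energy estimate from \cref{UEEa}. First I would record that, by \cref{comp.dual.soln}, a least energy solution $(u_\eps,v_\eps)$ corresponds to a critical point $(w_1,w_2)=(u_\eps^p,v_\eps^q)$ of $I_\eps$ with $I_\eps(w_1,w_2)=\J_\eps(u_\eps,v_\eps)=c_\eps$, and from the Pohozaev/energy identity used in \cref{comp.dual.soln}(ii) together with \cref{UEEa} we obtain the a priori bound
\begin{equation*}
	\eps^{-N}\left(\int_\M u_\eps^{p+1}\,dv_g+\int_\M v_\eps^{q+1}\,dv_g\right)\le C,
\end{equation*}
i.e. $\|u_\eps\|_{L^{p+1}(\M)}^{p+1}+\|v_\eps\|_{L^{q+1}(\M)}^{q+1}=O(\eps^N)$. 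This is the only global information we start from.

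Next I would run the standard elliptic bootstrap for the system, keeping track of $\eps$. Writing $w_2=v_\eps^q\in L^{\beta}(\M)$ with $\beta=\tfrac{q+1}{q}$, \cref{LP-estimate} gives $u_\eps=T_{1,p}w_2\in W^{2,\beta}(\M)$ with $\|u_\eps\|_{W^{2,\beta}}\le C\eps^{-2}\|v_\eps^q\|_{L^\beta}=C\eps^{-2}\|v_\eps\|_{L^{q+1}}^q$; Sobolev embedding then improves the integrability of $u_\eps$, which feeds into the equation for $v_\eps$, and vice versa. Because $p,q$ lie strictly below the critical hyperbola \eqref{HC}, the exponents $\alpha^*,\beta^*$ in \eqref{eqExponent} leave a genuine gain at each step, so after finitely many iterations one reaches $u_\eps,v_\eps\in L^\infty(\M)$; the appendix bootstrap argument the authors refer to is exactly this. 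The point is that this yields an $\eps$-dependent bound of the form $\|u_\eps\|_\infty+\|v_\eps\|_\infty\le C\eps^{-k}$ for some $k$, which is not yet what we want.

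To upgrade to an $\eps$-uniform bound I would argue by contradiction using rescaling. Suppose $M_\eps:=\max\{\|u_\eps\|_\infty,\|v_\eps\|_\infty\}\to\infty$ along some sequence $\eps\to0$; say $u_\eps(x_\eps)=\|u_\eps\|_\infty$ realizes (a comparable fraction of) $M_\eps$. Choose scaling exponents $a,b>0$ with $a(p-1)=2=\ldots$ determined so that $\tilde u(y):=M_\eps^{-1}u_\eps(\exp_{x_\eps}(\eps \mu_\eps y))$, $\tilde v$ analogously with the matching power of $M_\eps$, satisfies on $B(0,R/(\eps\mu_\eps))$ a system of the form $-\Delta_{g_\eps}\tilde u+\mu_\eps^2\tilde u=|\tilde v|^{q-1}\tilde v$ (and the companion), where $\mu_\eps\to0$ is the natural rescaling factor forced by $M_\eps\to\infty$ and $g_\eps\to\delta$ in $C^2_{loc}$. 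Interior elliptic estimates give $C^2_{loc}$ convergence of a subsequence to a nonnegative, nontrivial, bounded entire solution $(\tilde u,\tilde v)$ of $-\Delta U=|V|^{q-1}V$, $-\Delta V=|U|^{p-1}U$ in $\rN$ (the limiting exponents stay subcritical by \eqref{HC}), whose nontriviality is guaranteed because $\tilde u(0)\ge \text{const}>0$. By the Liouville-type nonexistence theorem for such Lane–Emden systems below the critical hyperbola, the only such solution is $(0,0)$ — contradiction. (If one prefers to avoid the pure Liouville theorem one instead keeps the $+\mu_\eps^2 U$ term, obtains in the limit a nontrivial solution of the entire system \eqref{es}, and then derives a contradiction with the energy upper bound $c_\eps\le \eps^N[I_\infty(U^p,V^q)+o(1)]$ from \cref{UEEa} by showing the rescaled solution would carry energy at least $\mathcal{C}_\infty$ on each of $\gtrsim M_\eps^{\gamma}\to\infty$ disjoint balls.)

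The main obstacle I anticipate is bookkeeping the correct scaling powers for the two unknowns simultaneously: unlike the scalar case, $u_\eps$ and $v_\eps$ blow up at different rates (governed by $p$ and $q$), so one must pick the rescaling so that both equations survive the limit and the limit system is still the $(p,q)$ Lane–Emden system; getting the powers consistent with $1/(p+1)+1/(q+1)>(N-2)/N$ so that the relevant Liouville theorem (or energy comparison) applies is the delicate point. A secondary technical nuisance is that the bootstrap and the blow-up must be carried out in local normal coordinates with the metric perturbation \eqref{metricexpansion}–\eqref{eqChristoffel}, so one needs the lower-order metric terms to be uniformly controlled and to vanish in the rescaled limit — routine but requires care near the cutoff scale $R<R^*$.
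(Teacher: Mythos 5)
Your strategy — blow-up by contradiction, rescaling with system-specific powers, passing to a Lane--Emden limit, Liouville nonexistence — is precisely the paper's strategy, including the use of the upper bound $c_\eps\le \eps^N\mathcal{C}_\infty$ from \cref{UEEa} to control the rescaled $L^{p+1}$- and $L^{q+1}$-integrals. The paper's choice of scaling powers is $\beta_1=\tfrac{2(q+1)}{pq-1}$, $\beta_2=\tfrac{2(p+1)}{pq-1}$, which is what your bookkeeping would produce, and the key algebraic fact that \eqref{HC} forces $N-\beta_1(p+1)\le 0$ is what makes the rescaled integral $\int\tilde u_j^{p+1}$ uniformly bounded; you should make that step explicit rather than leaving it as a remark about consistency of exponents.

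The one genuine gap is the Liouville step. You write ``by the Liouville-type nonexistence theorem for such Lane--Emden systems below the critical hyperbola, the only such solution is $(0,0)$,'' but no unconditional Liouville theorem is available for the system $-\Delta U=V^q$, $-\Delta V=U^p$ in the full subcritical range for general $N$ (the Lane--Emden conjecture is only settled in low dimensions and under extra hypotheses such as radial symmetry or boundedness with decay). The paper therefore does not invoke a general Liouville theorem: it first argues that the rescaled solutions are radially symmetric (appealing to the moving-planes result of de~Figueiredo--Yang for the approximating system with the mass term $\lambda_j^2\tilde u_j$ still present) so that the $C^2_{loc}$ limit is a radial positive solution, and then applies Mitidieri's Rellich-type nonexistence theorem for \emph{radial} solutions, which does cover the whole region below the critical hyperbola. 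Your alternative of ``keeping the $+\mu_\eps^2 U$ term'' does not work as stated, since $\mu_\eps\to 0$ makes that term vanish in the limit regardless. To make the proposal rigorous you should either reproduce the radial-symmetry-then-Mitidieri route, or explicitly cite a Liouville result that applies without radial symmetry in the dimensions you need.
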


\begin{proof}
	By contradiction, let us suppose that there exists a subsequence $\eps_{j} \to 0$ as $j \to \infty$ such that
	\begin{equation*}
		\max\{\|u_{\eps_{j}}\|_{L^{\infty}}, \|v_{\eps_{j}}\|_{L^{\infty}}\} \to \infty.
	\end{equation*}
	For simplicity, let us denote the sequence $(u_{\eps_{j}}, v_{\eps_{j}})$ by $(u_{j}, v_{j})$. Define two constants, $\beta_{1}, \beta_{2}$ as
	\begin{equation*}
		\beta_{1}=\dfrac{2(1+q)}{pq-1}, \qquad \beta_{2}=\dfrac{2(1+p)}{pq-1}.
	\end{equation*}
	Without loss of generality we assume 
	\begin{equation*}
		u_{j}(p_{j})= \|u_{\eps_{j}}\|_{L^{\infty}(\M)} \to \infty \text{ and } \|v_{j}\|_{L^{\infty}(\M)} \leq \|u_{j}\|_{L^{\infty}(\M)},
	\end{equation*}
	for some $p_j\in\M$. Let $\lambda_{j}$ be a sequence of real numbers with $\lambda_{j}^{\beta_{1}}\|u_{j}\|_{L^{\infty}} =1$. Then $\lambda_{j} \to 0$ as $j \to \infty$. For any $R<R^*$, we take the radial cutoff function $\Phi_R$ as in \eqref{radcutoff} and define the scaled functions $\tilde{u}_{j}$ and $\tilde{v}_{j}$ in $B(0,R/2\eps_{j}\lambda_{j})$ as:
	\begin{equation*}
		\tilde{u}_{j}(x) = \lambda_{j}^{\beta_{1}}u_{j}\(\Psi_{p_{j}}^{-1}( \eps_{j} \lambda_{j}x)\)\Phi_R\( \eps_{j} \lambda_{j}x\),  \qquad \tilde{v}_{j}(x) = \lambda_{j}^{\beta_{2}}v_{j}\(\Psi_{p_{j}}^{-1}( \eps_{j} \lambda_{j}x)\)\Phi_R\( \eps_{j} \lambda_{j}x\). 
	\end{equation*}
	Then we have $\|\tilde{u}_{j}\|_{L^{\infty}} \leq 1$, and $\tilde{u}_{j}(0)= \lambda_{j}^{\beta_{1}} u_{j}(p_{j})=1$. Also, $(\tilde{u}_{j}, \tilde{v}_{j})$ satisfy the system
	\begin{equation*}
		\begin{aligned}
			-\dfrac{1}{\sqrt{g_{p}(\eps_{j} \lambda_{j} x)}}\sum_{i, j}\partial_{i}\(g^{ij}_{p}(\eps_{j} \lambda_{j} x)\sqrt{g_{p}(\eps_{j} \lambda_{j} x)}\partial_{j}\tilde{u}_{j}\) +\lambda_{j}^{2}\tilde{u}_{j}&=\lambda_{j}^{\beta_{1}+2-\beta_{2}q}\tilde{v}^{q}_{j}=\tilde{v}^{q}_{j},\\
			-\dfrac{1}{\sqrt{g_{p}(\eps_{j} \lambda_{j} x)}}\sum_{i, j}\partial_{i}\(g^{ij}_{p}(\eps_{j} \lambda_{j} x)\sqrt{g_{p}(\eps_{j} \lambda_{j} x)}\partial_{j}\tilde{v}_{j}\) +\lambda_{j}^{2}\tilde{v}_{j}&=\lambda_{j}^{\beta_{2}+2-\beta_{1}p}\tilde{u}^{p}_{j}=\tilde{u}^{p}_{j}.
		\end{aligned}
	\end{equation*}
	We can easily see that $\beta_{1}+2-\beta_{2}q=0=\beta_{2}+2-\beta_{1}p$. From \cref{UEEa} we have
	\be c_\eps\le \eps^N\ml{C}_\infty.\nee
	Since $(u_\eps,v_\eps)$ is a least energy solution we get
	\be \Big[\f{1}{2}-\f{1}{p+1}\Big]\int_{\M}|u_\eps|^{p+1}dv_g+\Big[\f{1}{2}-\f{1}{q+1}\Big]\int_{\M}|v_\eps|^{q+1}dv_g<C \text{ uniformly } \forall \eps<<1.\nee
	Thus, for some $C>0$,
	\begin{align*}
		C\eps_j^N&\ge \int_{\M}u_j^{p+1}dv_g\ge \int_{B_g(p_j,R/2)}u_j^{p+1}dv_g\\
		&=\int_{B(0,R/2\la_j\eps_j)}\abs{\la_j}^{-\beta_1(p+1)}\tilde{u}_j^{p+1}(z)\sqrt{g(\la_j\eps_j z)}d(\la_j\eps_j z)\\
		&\ge C_1\eps_j^N\int_{B(0,R/2\la_j\eps_j)}\abs{\la_j}^{N-\beta_1(p+1)}\tilde{u}_j^{p+1}(z)dz.
	\end{align*}
	Now from \ef{HC} we note that $N-\beta_1(p+1)=N-\frac{2(p+1)(q+1)}{pq-1}< 0.$ Hence, as $\la_j\to 0$ we have (with a similar analysis)
	\be \int_{B(0,R/2\la_j\eps_j)}\tilde{u}_j^{p+1}(z)dz, \,\text{ and } \int_{B(0,R/2\la_j\eps_j)}\tilde{v}_j^{q+1}(z)dz<C,\nee
	uniformly for any $0<\eps_j<<1.$ Using bootstrap arguments and Sobolev embedding (as in the \cref{APPB}) $\(\tilde{u}_{j}, \tilde{v}_{j}\)$ is bounded in $C^{2,\alpha}\(B(0,r)\)\times C^{2,\alpha}\(B(0,r)\)$ for some $\alpha \in (0,1)$ and any $r<R/2\la_j\eps_j$. Hence, there exists a constant $C>0$, independent of $j$, such that
	\begin{equation*}
		\|\tilde{u}_{j}\|_{C^{2,\alpha}\(B(0,r)\)}+\|\tilde{v}_{j}\|_{C^{2,\alpha}\(B(0,r)\)}<C.
	\end{equation*}
	So the functions $\tilde{u}_{j}$ and $\tilde{v}_{j}$ and their derivatives up to order $2$ are uniformly bounded on $B(0,r)$. Also, from the definition of H\"{o}lder continuity, we have
	\begin{equation*}
		|D^{k}\tilde{u}_{j}(x)-D^{k}\tilde{u}_{j}(y)| \leq C|x-y|^{\alpha}, \qquad \text{ for all } |k|=2, \; x, y \in B(0,r), \; x\neq y , \text{ and for all $j$ large}.
	\end{equation*}
	A similar estimate holds for $\tilde{v}_{j}$. From the uniform bound, we also have that $\tilde{u}_{j}$ and $\tilde{v}_{j}$ and their derivatives up to order $2$ are equicontinuous and uniformly bounded. By Arzela-Ascoli theorem, we conclude that up to a subsequence,
	\begin{equation*}
		(\tilde{u}_{j}, \tilde{v}_{j}) \to (\tilde{u}, \tilde{v})\quad \text{ in } \quad C^{2}(B(0,r)) \times C^{2}(B(0, r)).
	\end{equation*}
	Since $r>0$ was arbitrary and $r \to \infty$, a standard diagonal argument gives
	\begin{equation*}
		(\tilde{u}_{j}, \tilde{v}_{j}) \to (\tilde{u}, \tilde{v})\quad \text{ in } \quad C^{2}_{loc}(\rN) \times C^{2}_{loc}(\rN),
	\end{equation*}
	where $(\tilde{u}, \tilde{v})$ satisfy the system
	\begin{equation*}
		-\Delta\tilde{u}=\tilde{v}^{q}, \qquad
		-\Delta\tilde{v}=\tilde{u}^{p} \quad \text{ in } \R^{N}.
	\end{equation*}
	Since $\(\tilde{u}_{j}, \tilde{v}_{j}\)$ are positive solutions, by \cite{MR1617988}, they are radially symmetric and this implies $(\tilde{u}, \tilde{v})$ are radial positive solutions. Then by Theorem 3.2 of \cite{MR1211727}, $\tilde{u}=\tilde{v}=0$. This is a contradiction since $\tilde{u}_{j}(0)=1$ for every $j$.	
\end{proof}

Now, let $p_\eps, q_\eps\in\M$ be points such that \be \max_{x\in\M} u_\eps=u_\eps(p_\eps) \quad \text{ and } \quad \max_{x\in\M} v_\eps=v_\eps(q_\eps).\nee
As $M$ is compact we find a sequence $\{\eps_j\}\to 0$ and points $p_0, q_0$ in $\M$ such that 
\be p_j:=p_{\eps_j}\to p_0 \quad \text{ and } \quad q_j:=q_{\eps_j}\to q_0.\nee
Then similarly as in the proof of Lemma 4.1 of \cite{MR4599355}, we can show that \be u_{\eps}(p_{\eps}) \geq 1 \qquad \text{ and } \quad v_{\eps}(q_{\eps})\geq 1.\label{lbd.maxpt}\ee 

Now we shall analyze the behavior of the points of maximum for the solution $(u_\eps,v_\eps).$ It is very crucial to notice here that from the \cref{UEEa} we have \be \limsup_{\eps\to 0} \frac{c_\eps}{\eps^N}\le \ml{C}_\infty.\label{unq.max.pt1}\ee

\begin{lemma}\label{ComMaxPt1}
	There exists $\eps_{0}>0$ such that for all $\eps \in (0,\eps_{0})$, any global maximum points $p_{\eps}$ and $q_{\eps}$ of $u_{\eps}$ and $v_{\eps}$, respectively, satisfies:
	\begin{equation}\label{distmaxpts}
		\frac{d_g(p_\eps,q_\eps)}{\eps} \to 0, \qquad \text{ as } \qquad \eps \to 0.
	\end{equation}
	Moreover, the sequences $\{p_{\eps}\}$ and $\{q_{\eps}\}$ converge to a unique subsequential limit $p_{0} \in \M$. 
\end{lemma}

\begin{proof}
	Take any sequence $\eps_j\to 0$ and denote $p_{\eps_{j}}$ and $q_{\eps_{j}}$ by $p_{j}$ and $q_{j}$. For $R$ being the injectivity radius of $\M$, we define for $|\eps_{j} z|<\frac{R}{2}$ the following,
	\begin{equation*}
		\begin{cases}
			\begin{aligned}
				\tilde{u}_{j}(z) &=u_{j}\(\Psi_{p_{j}}^{-1}( \eps_{j} z)\) ,\\
				\tilde{v}_{j}(z) &=v_{j}\(\Psi_{p_{j}}^{-1}( \eps_{j} z)\).
			\end{aligned}
		\end{cases} \text{ and } \quad 
		\begin{cases}
			\begin{aligned}
				\bar{u}_{j}(z) &=u_{j}\(\Psi_{q_{j}}^{-1}( \eps_{j} z)\),\\
				\bar{v}_{j}(z) &=v_{j}\(\Psi_{q_{j}}^{-1}( \eps_{j} z)\).
			\end{aligned}
		\end{cases}
	\end{equation*}
	Using boot-strap arguments as in the \cref{APPB} we can show $(\tilde{u}_{j},\tilde{v}_{j}) \to (\tilde{u}, \tilde{v})$ and $(\bar{u}_{j},\bar{v}_{j}) \to (\bar{u}, \bar{v})$ in $C^{2}_{loc}(\rN)$, where $(\tilde{u}, \tilde{v})$ and $(\bar{u}, \bar{v})$ solves $\eqref{es}$. Now by contradiction, let $\eps_{j}R\leq \dfrac{d_{g}(p_{j},q_{j})}{2}$, which means $B(p_{j}, \eps_{j}R)\cap B(q_{j}, \eps_{j}R)=\emptyset$. Then we see that
	\begin{equation*}
		\begin{aligned}
			\Gamma_{\eps}(u_{j}, v_{j})
			&=\(\dfrac{1}{2}-\dfrac{1}{p+1}\)\int_{\M}u_{j}^{p+1}dv_{g}+\(\dfrac{1}{2}-\dfrac{1}{q+1}\)\int_{\M}v_{j}^{q+1}dv_{g}\\
			&\geq\(\dfrac{1}{2}-\dfrac{1}{p+1}\)\int_{B_{g}(p_{j},\eps_{j}R)}u_{j}^{p+1}dv_{g}+\(\dfrac{1}{2}-\dfrac{1}{q+1}\)\int_{B_{g}(p_{j},\eps_{j}R)}v_{j}^{q+1}dv_{g}\\
			&+\(\dfrac{1}{2}-\dfrac{1}{p+1}\)\int_{B_{g}(q_{j},\eps_{j}R)}u_{j}^{p+1}dv_{g}+\(\dfrac{1}{2}-\dfrac{1}{q+1}\)\int_{B_{g}(q_{j},\eps_{j}R)}v_{j}^{q+1}dv_{g}\\
			&=\eps_{j}^{N}\left[ \(\dfrac{1}{2}-\dfrac{1}{p+1}\)\int_{B(0,R)}\tilde{u}_{j}^{p+1}(z)dz+\(\dfrac{1}{2}-\dfrac{1}{q+1}\)\int_{B(0,R)}\tilde{v}_{j}^{q+1}(z)dz\right.\\
			&+\left.\(\dfrac{1}{2}-\dfrac{1}{p+1}\)\int_{B(0,R)}\bar{u}_{j}^{p+1}(z)dz+\(\dfrac{1}{2}-\dfrac{1}{q+1}\)\int_{B(0,R)}\bar{v}_{j}^{q+1}(z)dz+o(1)\right].
		\end{aligned}
	\end{equation*}
	Now from the $C^{2}_{loc}$ convergence and the exponential decay of limits, we obtain
	\begin{equation*}
		\begin{aligned}
			\dfrac{\Gamma_{\eps_{j}}(u_{j}, v_{j})}{\eps_{j}^{N}}
			&\geq\(\dfrac{1}{2}-\dfrac{1}{p+1}\)\int_{\rN}\tilde{u}^{p+1}(z)dz+\(\dfrac{1}{2}-\dfrac{1}{q+1}\)\int_{\rN}\tilde{v}^{q+1}(z)dz\\
			&+\(\dfrac{1}{2}-\dfrac{1}{p+1}\)\int_{\rN}\bar{u}^{p+1}(z)dz+\(\dfrac{1}{2}-\dfrac{1}{q+1}\)\int_{\rN}\bar{v}^{q+1}(z)dz+o(1)\\
			& \geq \ml{C}_{\infty}+\ml{C}_{\infty}+o(1)\\
			&=2\ml{C}_{\infty}+o(1),
		\end{aligned}
	\end{equation*}
	which is a contradiction. Hence we get $d_{g}\(p_{j},q_{j}\) \to 0$. So, for $j$ large we have $q_j\in B_g(p_j, r)$, for some small $r>0$. Define
	\begin{equation}\label{eqmaxxj}
		x_{j}:=\dfrac{\Psi_{p_{j}}(q_{j})}{\eps_{j}} \in B(0, \tfrac{R}{\eps_{j}}), \quad \text{ so that } \quad \tilde{u}_{j}(0)=u_{j}(\Psi_{p_{j}}(0))=u_{j}(p_{j}), \qquad \tilde{v}_{j}(x_{j})=\tilde{v}_{j}(\tfrac{\Psi_{p_{j} }(q_{j})}{\eps_{j}})=v_{j}(q_{j}).
	\end{equation}
	Thus, $0$ and $x_{j}$ are the respective maximum points of $\tilde{u}_{j}$ and $\tilde{v}_{j}$. Since $(\tilde{u}_{j},\tilde{v}_{j}) \to (\tilde{u}, \tilde{v})$ in $C^{2}_{loc}(\rN) \times C^{2}_{loc}(\rN)$, $\tilde{u}$ and $\tilde{v}$ have only one common maximum point(Theorem 1 of \cite{MR1755067}), we deduce
	\begin{equation*}
		\lim_{j \to \infty}x_{j}= 0, \qquad \text{ and hence } \quad \frac{d_g(p_\eps,q_\eps)}{\eps}\to 0, \qquad \text{ as } \quad \eps \to 0.
	\end{equation*}
	The similar argument as above will conclude the uniqueness of the subsequential limit $p_{0}$ where both $p_{\eps}$ and $q_{\eps}$ concentrate as $\eps \to 0$.
\end{proof}

As $d_g(p_\eps,q_\eps)=o(\eps),$ we shall work around the global maximum point $p_\eps$ of $u_\eps$. For a sequence $\eps_j\to 0$, let us denote $p_{j}:=p_{\eps_j}$ and $q_{j}:=q_{\eps_j}$, with both $p_{j} \to p_{0}$ and $q_{j} \to p_{0}$ as $j \to \infty$.  We now define the functions
\begin{equation}\label{trfsol}
	\begin{cases}
		\begin{aligned}
			U_{j}(z) &=u_{j}\(\Psi_{p_j}^{-1}( \eps_{j} z)\)\Phi_R(\eps_{j} z),\\
			V_{j}(z) &=v_{j}\(\Psi_{p_j}^{-1}( \eps_{j} z)\)\Phi_R(\eps_{j} z)
			\quad \text{ for } z \in B(0,\frac{R}{\eps_{j}}),
		\end{aligned}
	\end{cases}
\end{equation} 
where $\Phi_R$ is the radial cutoff function defined in \ef{radcutoff}, and $U_{j}(z)=V_{j}(z)=0$ in $\rN\backslash B(0,\frac{R}{\eps_{j}}).$ Then from \eqref{lbd.maxpt}, we see that 
\begin{equation*}
	U_j(0)=u_j(p_j)\ge 1 \quad \text{ and } \quad V_j(x_j)=v_j(q_j)\ge 1,
\end{equation*}
where $x_{j}$ is as defined in \eqref{eqmaxxj}.
Using arguments similar to those in \cref{LinfBound}, for any fixed $r<\frac{R}{2\eps_j}$, the sequences
${U}_{j}$ and ${V}_{j}$ are uniformly bounded in $C^{2,\alpha}(B(0,r))$, and up to a subsequence, $(U_j,V_j)\to (U,V)$ in $C^2_{loc}(\rN)$, {where $(U,V)$ is a positive, radial solution of \ef{es}.

\begin{lemma}
	It holds that \be \lim_{\eps\to 0}\eps^{-N}c_\eps=\ml{C}_\infty.\nee 
\end{lemma}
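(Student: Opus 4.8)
\emph{Proof proposal.} The plan is to prove the two matching inequalities $\limsup_{\eps\to 0}\eps^{-N}c_\eps\le \mathcal{C}_\infty$ and $\liminf_{\eps\to 0}\eps^{-N}c_\eps\ge \mathcal{C}_\infty$. The first is already available: Proposition \ref{UEEa}, equivalently \eqref{unq.max.pt1}, gives exactly $\limsup_{\eps\to 0}\eps^{-N}c_\eps\le\mathcal{C}_\infty$, so the whole content is the lower bound.

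For the lower bound I would fix a sequence $\eps_j\to 0$ along which $\eps_j^{-N}c_{\eps_j}$ converges to $\liminf_{\eps\to 0}\eps^{-N}c_\eps$, and work with the rescaled functions $(U_j,V_j)$ from \eqref{trfsol}, centered at the common maximum point $p_j=p_{\eps_j}$. Passing to a further subsequence we have, as recorded after \eqref{trfsol}, that $(U_j,V_j)\to(U,V)$ in $C^2_{loc}(\rN)$, that $(U,V)$ solves the entire system \eqref{es}, and that $U(0),V(0)\ge 1$ by \eqref{lbd.maxpt}; in particular $(U,V)$ is a nontrivial positive solution of \eqref{es}. Using Lemma \ref{comp.dual.soln}(ii) for the solution $(u_j,v_j)$ attached to the least energy critical point $w_{\eps_j}$ of $I_{\eps_j}$,
\[
\frac{c_{\eps_j}}{\eps_j^{N}}=\frac{1}{\eps_j^{N}}\Big[\Big(\tfrac12-\tfrac1{p+1}\Big)\int_{\M}u_j^{p+1}\,dv_g+\Big(\tfrac12-\tfrac1{q+1}\Big)\int_{\M}v_j^{q+1}\,dv_g\Big].
\]
Restricting the integrals to the geodesic ball $B_g(p_j,R/2)$, changing variables $x=\Psi_{p_j}^{-1}(\eps_j z)$ (on which $\Phi_R(\eps_j z)\equiv 1$ once $j$ is large since $R/2\eps_j\to\infty$), and using $\sqrt{g_{p_j}(\eps_j z)}\to 1$ uniformly on compacta together with the $C^2_{loc}$ convergence, one gets for each fixed $r>0$ that $\eps_j^{-N}\int_{\M}u_j^{p+1}\,dv_g\ge\int_{B(0,r)}U^{p+1}\,dz+o(1)$ and similarly for $v_j$. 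Letting $j\to\infty$ and then $r\to\infty$ (Fatou) yields
\[
\liminf_{\eps\to 0}\frac{c_{\eps}}{\eps^{N}}\ge\Big(\tfrac12-\tfrac1{p+1}\Big)\int_{\rN}U^{p+1}\,dz+\Big(\tfrac12-\tfrac1{q+1}\Big)\int_{\rN}V^{q+1}\,dz=I_\infty(U^p,V^q),
\]
the last equality being the dual-energy identity of Lemma \ref{comp.dual.soln}(ii) transported to $\rN$ with $h=U^p$, $k=V^q$, $S_1k=U$, $S_2h=V$.

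It remains to see that $I_\infty(U^p,V^q)\ge\mathcal{C}_\infty$. Since $(U^p,V^q)$ is a nontrivial critical point of $I_\infty$, the scalar function $g(t):=I_\infty\big(t(U^p,V^q)\big)$ satisfies $g'(1)=I_\infty'(U^p,V^q)(U^p,V^q)=0$; repeating the argument of Lemma \ref{maxpta} ($g'(t)>0$ for small $t$ because $p,q>1$, $g'(t)\to-\infty$ as $t\to\infty$, and $g''$ is decreasing) shows that $t=1$ is the unique positive maximizer of $g$. Choosing $T_0>1$ with $g(T_0)<0$, the path $\varphi(t)=tT_0(U^p,V^q)$, $t\in[0,1]$, belongs to $\mathcal{P}$ and $\sup_{t\in[0,1]}I_\infty(\varphi(t))=g(1)=I_\infty(U^p,V^q)$, hence $\mathcal{C}_\infty\le I_\infty(U^p,V^q)$. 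Combining the three displays gives $\liminf_{\eps\to 0}\eps^{-N}c_\eps\ge\mathcal{C}_\infty$, and together with the upper bound the limit exists and equals $\mathcal{C}_\infty$. The delicate point is the lower semicontinuity step: one must localize the mass to the chart $B_g(p_j,R/2)$ before changing variables and rely on Fatou there, since no compactness of $(U_j,V_j)$ on all of $\rN$ is available and some mass could a priori escape the normal coordinate ball; the argument above is arranged precisely so that only the contribution already captured inside the chart is needed.
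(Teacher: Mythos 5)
Your proposal is correct and follows essentially the same route as the paper's own proof: both start from the identity $c_{\eps_j}/\eps_j^{N} = \eps_j^{-N}\bigl[(\tfrac12-\tfrac1{p+1})\int_\M u_j^{p+1}+(\tfrac12-\tfrac1{q+1})\int_\M v_j^{q+1}\bigr]$, restrict to the normal-coordinate ball $B_g(p_j,R/2)$, rescale via $(U_j,V_j)$, and pass to the $C^2_{loc}$ limit $(U,V)$. The paper then simply asserts the resulting quantity is $\ge \mathcal{C}_\infty + o(1)$; you make that step explicit, first by the Fatou/lower-semicontinuity argument giving $\liminf \eps^{-N}c_\eps \ge I_\infty(U^p,V^q)$, and then by producing a Mountain-Pass path through $(U^p,V^q)$ to show $I_\infty(U^p,V^q)\ge\mathcal{C}_\infty$, which is precisely what the paper leaves implicit. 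So this is not a different proof but a more detailed rendering of the same one, and your closing remark about possible escape of mass being irrelevant (since only a one-sided lower bound inside the chart is used) correctly identifies why the localization is harmless.
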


\begin{proof}
	We have from \cref{UEEa} that $\eps^{-N}c_\eps\le \ml{C}_\infty$ for $\eps$ small enough. For the other side inequality we note
	\begin{equation*}
		\begin{aligned}
			\dfrac{\Gamma_{\eps}(u_{j}, v_{j})}{\eps_{j}^{N}}
			&=\dfrac{1}{\eps_{j}^{N}}\left[ \(\dfrac{1}{2}-\dfrac{1}{p+1}\)\int_{\M}u_{j}^{p+1}dv_g+\(\dfrac{1}{2}-\dfrac{1}{q+1}\)\int_{\M}v_{j}^{q+1}dv_g\right] \\
			&\geq \dfrac{1}{\eps_{j}^{N}}\int_{B(0,R/2)}\Big[\(\dfrac{p-1}{2(p+1)}\)u_{j}^{p+1}(\Psi_{p_j}^{-1}(x))+\(\dfrac{q-1}{2(q+1)}\)v_{j}^{q+1}(\Psi_{p_j}^{-1}(x))\Big]\sqrt{g_{p_{j}}(x)}dx\\
			&=\int_{B(0,\tfrac{R}{2\eps_{j}})}\Big[\dfrac{p-1}{2(p+1)}U_{j}^{p+1}(z))+\dfrac{q-1}{2(q+1)}V_{j}^{q+1}(z))\Big]\sqrt{g_{p_{j}}(\eps_{j} z)}dz\\
			&\geq \ml{C}_\infty+o(1).
		\end{aligned}
	\end{equation*}
\end{proof}
Hence we conclude that $(U_j,V_j)\to(U,V)$ in $C^2_{loc}(\rN)$, where $(U,V)$ is a positive, radial, least energy solution of \ef{es}. Also using similar arguments as in the Section 5 of \cite{MR4599355} we can conclude that 
\be \abs{D^\delta U_j(z)}\le Ce^{-\theta |z|} \text{ and } \abs{D^\delta V_j(z)}\le Ce^{-\theta |z|} \, \text{ in } B(0,R/2\eps_j),\label{expoDecayR}\ee
And equivalently
\be \abs{D^\delta u_\eps(p)}\le Ce^{-\theta \frac{d(p,p_\eps)}{\eps}} \text{ and } \abs{D^\delta v_\eps(p)}\le Ce^{-\theta \frac{d(p,p_\eps)}{\eps}}\, \text{ in } B_g(p,R).\label{expoDecayM}\ee
for $|\delta|\le 2$ and some $C, \theta>0$.

\begin{prop}[Lower Energy Estimate]\label{LEEa}
	For small $\eps_{j}$, we have
	\begin{equation*}
		c_{\eps_{j}}\geq\eps_{j}^{N}\left[ \ml{C}_\infty-\eps_{j}^{2}\dfrac{S(p_{j})}{6N}\left(\dfrac{p-1}{2(p+1)}\int_{\R^{N}}U^{p+1}(y)|y|^{2}dy+\dfrac{q-1}{2(q+1)}\int_{\R^{N}}V^{q+1}(y)|y|^{2}dy\right)+o(\eps_{j}^{2})\right],
	\end{equation*}
	where $S(p)$ is the Scalar curvature of $\M$ at $p \in \M$, and $(U,V)$ is a least energy solution of \eqref{es}.
\end{prop}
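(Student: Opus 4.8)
The plan is to carry out the mirror of the Section~3 computation, now with the genuine solution in place of the test function. One starts from $c_{\eps_j}=J_{\eps_j}(u_j,v_j)$, which holds because $(u_j,v_j)$ corresponds to the least energy dual critical point $w_{\eps_j}$ (Lemma~\ref{comp.dual.soln}(ii)), and which, as in the proof of that lemma, collapses to the cross-term-free identity
\[
c_{\eps_j}=\frac{p-1}{2(p+1)}\int_{\M}u_j^{p+1}\,dv_g+\frac{q-1}{2(q+1)}\int_{\M}v_j^{q+1}\,dv_g .
\]
Since all terms are positive, I would discard the contribution of $\M\setminus B_g(p_j,R/2)$; this is the sole source of the inequality in the statement (the discarded piece being, by \eqref{expoDecayM}, $O(e^{-c/\eps_j})=o(\eps_j^{N+2})$ in any case). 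On the remaining ball I pass to normal coordinates $\Psi_{p_j}$, rescale by $z=x/\eps_j$, and use that $\Phi_R\equiv1$ on $B(0,R/2\eps_j)$, so that $U_j(z)=u_j(\Psi_{p_j}^{-1}(\eps_j z))$ there and
\[
\eps_j^{-N}\int_{B_g(p_j,R/2)}u_j^{p+1}\,dv_g=\int_{B(0,R/2\eps_j)}U_j^{p+1}(z)\,\sqrt{g_{p_j}(\eps_j z)}\,dz ,
\]
and similarly for $v_j$. Inserting \eqref{metricexpansion}, $\sqrt{g_{p_j}(\eps_j z)}=1-\tfrac{\eps_j^2}{6}\mathrm{Ric}_{p_j}(z,z)+O(\eps_j^3|z|^3)$, the cubic remainder integrates against $U_j^{p+1}$ to $O(\eps_j^3)=o(\eps_j^2)$ thanks to the uniform decay \eqref{expoDecayR}.

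The core of the argument is the expansion of the concentration profile. The rescaled pair solves the system coming from \eqref{eq0}, namely $-\Delta U_j+U_j=V_j^{q}+E_j^{(1)}$ and $-\Delta V_j+V_j=U_j^{p}+E_j^{(2)}$, where $E_j^{(i)}$ arises from the metric's deviation from flatness acting on derivatives of $(U_j,V_j)$; by \eqref{metricexpansion}, \eqref{eqChristoffel} and \eqref{expoDecayR} one has $\|E_j^{(i)}\|_{L^s(\rN)}=O(\eps_j^2)$ for every $s$. Combining elliptic estimates for this rescaled system with the compactness of the ground-state set $\ml{L}$ (Proposition~\ref{cptsolnsp}) yields, along the subsequence, a quantitative expansion $U_j=U+\eps_j^2\psi_1+o(\eps_j^2)$ and $V_j=V+\eps_j^2\psi_2+o(\eps_j^2)$ in a suitable weighted Lebesgue norm; here $(U,V)\in\ml{L}$ is the $C^2_{loc}$-limit and $(\psi_1,\psi_2)$ solves the linearization of \eqref{es} at $(U,V)$ with the explicit curvature source dictated by \eqref{metricexpansion}--\eqref{eqChristoffel}. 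Substituting, the right-hand side is bounded below by the leading term $\tfrac{p-1}{2(p+1)}\int_{\rN}U^{p+1}+\tfrac{q-1}{2(q+1)}\int_{\rN}V^{q+1}$, equal to $I_\infty(U^p,V^q)=\ml{C}_\infty$ by the least-energy property of $(U,V)$, plus an $\eps_j^2$ term assembled from the $\mathrm{Ric}$ factor and from $\int U^p\psi_1$ and $\int V^q\psi_2$. Radial symmetry of $(U,V)$ kills all but the radial parts, and then integration by parts together with the normal-coordinate curvature identities collapse this coefficient to $-\tfrac{S(p_j)}{6N}\bigl(\tfrac{p-1}{2(p+1)}\int_{\rN}|y|^2U^{p+1}+\tfrac{q-1}{2(q+1)}\int_{\rN}|y|^2V^{q+1}\bigr)$, using $S(p_j)=S(p_0)+o(1)$ to move freely between the base point and its limit.

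The main obstacle is this $\eps_j^2$-order profile correction: the $C^2_{loc}$ convergence $(U_j,V_j)\to(U,V)$ established earlier is by itself too weak to evaluate $\int_{B(0,R/2\eps_j)}(U_j^{p+1}-U^{p+1})\,dz$ at the rate $o(\eps_j^2)$ that the statement demands, and upgrading it to the quantitative expansion above is the real work. It leans on three ingredients already in hand: the uniform exponential decay \eqref{expoDecayR}, which furnishes both a $j$-independent integrable majorant and the estimate $\|E_j^{(i)}\|_{L^s}=O(\eps_j^2)$; elliptic regularity for the rescaled system; and the compactness of $\ml{L}$ from Proposition~\ref{cptsolnsp}, which neutralizes the non-uniqueness of the entire ground state by letting one argue uniformly along the convergent subsequence. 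The leftover curvature bookkeeping is a lengthy but routine Taylor expansion in normal coordinates. An alternative route — rescaling the dual functional $I_{\eps_j}$ itself to $\hat I_j=I_\infty+\eps_j^2 Q_j+o(\eps_j^2)$ on $Y^*$ and estimating its mountain-pass level by a Lyapunov--Schmidt reduction near $\ml{L}$ — runs into exactly the same analytic difficulty.
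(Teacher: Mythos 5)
Your instinct that a direct evaluation of the integral $c_{\eps_j}=\tfrac{p-1}{2(p+1)}\int u_j^{p+1}+\tfrac{q-1}{2(q+1)}\int v_j^{q+1}$ would demand a rate of convergence is correct, but the remedy you propose — a quantitative expansion $U_j=U+\eps_j^2\psi_1+o(\eps_j^2)$ — is where the proposal breaks. Such an expansion is not a consequence of compactness of $\ml{L}$ alone: compactness is precisely the substitute the paper uses \emph{because} no non-degeneracy (nor uniqueness modulo translations) of the entire ground state is available. If $\ml{L}$ contains more than one orbit, or if the linearized operator at $(U,V)$ has a kernel beyond translations, the sequence $(U_j,V_j)$ can drift within (or near) $\ml{L}$ at an uncontrolled rate, and there is no reason for the leading deviation to be of order $\eps_j^2$. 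The Lyapunov--Schmidt route you mention has exactly the same obstruction, as you note yourself, so as written there is a real gap at the core step.

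The paper avoids the issue with a different and cleaner decomposition. Since $w_j$ is the critical point, $t=1$ maximises $t\mapsto I_{\eps_j}(tw_j)$, so in particular
\[
c_{\eps_j}=I_{\eps_j}(w_j)\geq I_{\eps_j}(tw_j)\geq H_j(t)+O(e^{-c/\eps_j})\qquad\text{for every }t>0,
\]
where $H_j$ is the localization to $B_g(p_j,R/2)$. After rescaling and inserting the metric expansion, $\eps_j^{-N}H_j(t)=I_\infty\bigl(t(\tilde U_j,\tilde V_j)\bigr)+G_j(t)+o(\eps_j^2)$, with $\tilde U_j=U_j^p$, $\tilde V_j=V_j^q$ and $G_j(t)$ the explicit $O(\eps_j^2)$ curvature term. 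Now pick $t=t_j$, the maximum of $t\mapsto I_\infty\bigl(t(\tilde U_j,\tilde V_j)\bigr)$, and use the mountain-pass definition to get $I_\infty\bigl(t_j(\tilde U_j,\tilde V_j)\bigr)\geq\ml{C}_\infty$ outright, with no asymptotics of $(U_j,V_j)$ required at leading order. The only place where convergence of $(U_j,V_j)$ enters is inside $G_j(t_j)$, which is already of size $\eps_j^2$; there, $t_j=1+o(1)$ together with $C^0_{loc}$ convergence and the uniform decay \eqref{expoDecayR} suffice to pass to the limit by dominated convergence. So the paper pays for the leading $\ml{C}_\infty$ with the variational definition of the mountain-pass level rather than with a rate of convergence of the rescaled solution — this is the idea missing from your proposal, and the reason you found yourself needing a non-degeneracy-type input that is not available.
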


\begin{proof}
	Let $(u_j,v_j):=(u_{\eps_j},v_{\eps_j})$ be a sequence of least energy solution of \eqref{eq0}, corresponding to the least energy critical points $w_{j}:=(w_{j, 1},w_{j, 2}):=(w_{\eps_j, 1},w_{\eps_j, 2})$ of $I_{\eps_j}$ say. Then from \cref{comp.dual.soln}, we have
	$(w_{j, 1}, w_{j, 2})=(u_{j}^{p}, v_{j}^{q})$  and $ \J_{\eps_j}(u_{j}, v_{j})= I_{\eps_j}\left(w_{j, 1}, w_{j, 2}\right)$. Also,
	
	\be c_{\eps_{j}}=I_{\eps_j}(w_j)\ge I_{\eps_j}(tw_j) \quad \text{ for any } t>0.\label{loenes1}\ee 
	Now 
	\begin{align*}
		I_{\eps_j}(tw_j)&=\int_{\M}\(\dfrac{t^\al}{\al}|w_{j, 1}|^{\al}+\dfrac{t^\beta}{\beta}|w_{j, 2}|^{\beta}\)dv_g -\dfrac{t^2}{2}\int_{\M}\left( w_{j, 1}\mathit{T_{1}}w_{j, 2}+w_{j, 2}\mathit{T_{2}}w_{j, 1}\right) dv_g\\
		&=\int_{B_{g}(p_{j},\tfrac{R}{2})}\(\dfrac{t^\al}{\al}|u_{j}|^{p+1}+\dfrac{t^\beta}{\beta}|v_{j}|^{q+1}\)dv_g -\dfrac{t^2}{2}\int_{B_{g}(p_{j},\tfrac{R}{2})}\left( u_{j}^{p}\mathit{T_{1}}v_{j}^{q}+v_{j}^{q}\mathit{T_{2}}u_{j}^{p}\right) dv_g\\
		&+\int_{\M\backslash B_{g}(p_{j},\tfrac{R}{2})} 	\(\(\dfrac{t^\al}{\al}-\frac{t^2}{2}\)|u_{j}|^{p+1}+\(\dfrac{t^\beta}{\beta}-\frac{t^2}{2}\)|v_{j}|^{q+1}\)dv_g\\
		&=\int_{B_{g}(p_{j},\tfrac{R}{2})}\(\dfrac{t^\al}{\al}|u_{j}|^{p+1}+\dfrac{t^\beta}{\beta}|v_{j}|^{q+1}\)dv_g -\dfrac{t^2}{2}\int_{B_{g}(p_{j},\tfrac{R}{2})}\left( u_{j}^{p}\mathit{T_{1}}v_{j}^{q}+v_{j}^{p}\mathit{T_{2}}u_{j}^{p}\right) dv_g+C\exp(-\frac{c}{\eps_j}).
	\end{align*}
	Hence we have for any $t>0$,
	\be c_{\eps_{j}}\ge H_{j}(t) +C\exp(-\frac{c}{\eps_j}),\label{loenes2}\ee
	where
	\begin{equation*}
		H_{j}(t)=\int_{B_{g}(p_{j},\tfrac{R}{2})}\(\dfrac{t^\al}{\al}|u_{j}|^{p+1}+\dfrac{t^\beta}{\beta}|v_{j}|^{q+1}\)dv_g -\dfrac{t^2}{2}\int_{B_{g}(p_{j},\tfrac{R}{2})}\left( u_{j}^{p}\mathit{T_{1}}v_{j}^{q}+v_{j}^{p}\mathit{T_{2}}u_{j}^{p}\right) dv_g.
	\end{equation*}
	\textbf{Step I: Estimation of the terms:} Using the asymptotic estimate \ef{expoDecayR}, and calculating similarly as in \cref{UE2a} we get,
	\begin{align}
		\int_{B_{g}(p_{j},\tfrac{R}{2})}|u_{j}|^{p+1}dv_g&=\eps_{j}^{N}\Big[\int_{\rN}U_j^{p+1}(y)dy-\int_{\rN}\dfrac{1}{6}Ric(\eps_j y, \eps_j y)U_j^{p+1}(y)dy+o(\eps_{j}^{2})\Big],\label{loenes3}\\
		\text{ and, }\quad \int_{B_{g}(p_{j},\tfrac{R}{2})}|v_{j}|^{q+1}dv_g&=\eps_{j}^{N}\Big[\int_{\rN}V_j^{q+1}(y)dy-\int_{\rN}\dfrac{1}{6}Ric(\eps_j y, \eps_j y)V_j^{q+1}(y)dy+o(\eps_{j}^{2})\Big].\label{loenes4}
	\end{align}
	Now let us calculate the second integral of \ef{loenes2}. To do this let us first calculate the following: 
	\begin{equation*}
		\begin{aligned}
			&\int_{B_{g}(p_{j},\tfrac{R}{2})}u_{j}^{p}T_{1}v_{j}^{q}dv_g\\ &= \int_{B(0,\tfrac{R}{2})}u_{j}^{p}(\Psi_{p_{j}}^{-1}(x))(-\dfrac{\eps^{2}}{\sqrt{g_{p}(x)}}\sum_{i, j}\partial_{i}\(g^{ij}_{p}(x)\sqrt{g_{p}(x)}\partial_{j} \)+id)^{-1}v_{j}^{q}(\Psi_{p_{j}}^{-1}(x))\sqrt{g_{p_{j}}(x)}dx\\
			&= \int_{B(0,\tfrac{R}{2})}U_{j}^{p}\(\dfrac{x}{\eps_{j}}\)\(-\dfrac{\eps^{2}}{\sqrt{g_{p}(x)}}\sum_{i, j}\partial_{i}\(g^{ij}_{p}(x)\sqrt{g_{p}(x)}\partial_{j} \)+id\)^{-1}V_{j}^{q}\(\dfrac{x}{\eps_{j}}\)\sqrt{g_{p_{j}}(x)}dx\\
			&= \int_{B(0,\tfrac{R}{2})}U_{j}^{p}\(\dfrac{x}{\eps_{j}}\)(-\eps^{2}\Delta+id)^{-1}V_{j}^{q}\(\dfrac{x}{\eps_{j}}\)\sqrt{g_{p_{j}}(x)}dx+\int_{B(0,\tfrac{R}{2})}U_{j}^{p}\(\dfrac{x}{\eps_{j}}\)\eta(x)\sqrt{g_{p_{j}}(x)}dx,
		\end{aligned}
	\end{equation*}
	where
	\begin{equation*}
		\eta(x)=\(-\dfrac{\eps^{2}}{\sqrt{g_{p}(x)}}\sum_{i, j}\partial_{i}\(g^{ij}_{p}(x)\sqrt{g_{p}(x)}\partial_{j} \)+id\)^{-1}V_{j}^{q}\(\dfrac{x}{\eps_{j}}\)-(-\eps^{2}\Delta+id)^{-1}V_{j}^{q}\(\dfrac{x}{\eps_{j}}\).
	\end{equation*}
	Let
	\begin{equation*}
		W_{j}\(\dfrac{x}{\eps_{j}}\)=(-\eps^{2}\Delta+id)^{-1}V_{j}^{q}\(\dfrac{x}{\eps_{j}}\).
	\end{equation*}
	Then from \ef{trfsol} we get $\eta(x)=U_{j}(\frac{x}{\eps_{j}}) -W_{j}(\frac{x}{\eps_{j}})$.
	Now note,
	\begin{equation*}
		\begin{aligned}
			\(-\eps^{2}\Delta +id\)U_{j}(\frac{x}{\eps_{j}}) +\eps^{2}(g^{kl}_{p}(x)-\delta_{kl})\partial_{kl}U_{j}(\frac{x}{\eps_{j}})-\eps^{2}g^{kl}_{p}(x) \Gamma_{kl}^{s}(x)\partial_{s}U_{j}(\frac{x}{\eps_{j}}) &= V_{j}^{q}(\frac{x}{\eps_{j}}),\\
			\(-\eps^{2}\Delta +id\)W_{j}(\frac{x}{\eps_{j}}) &=V_{j}^{q}(\frac{x}{\eps_{j}}) \quad \text{in } B(0,\dfrac{R}{2}).
		\end{aligned}
	\end{equation*}
	This gives
	\begin{equation*}
		\(-\eps^{2}\Delta+id\) \Big[U_{j}(\frac{x}{\eps_{j}})- W_{j}(\frac{x}{\eps_{j}})\Big] =\eps^{2}g^{kl}_{p}(x) \Gamma_{kl}^{s}(x)\partial_{s}U_{j}(\frac{x}{\eps_{j}})-\eps^{2}(g^{kl}_{p}(x)-\delta_{kl})\partial_{kl}U_{j}(\frac{x}{\eps_{j}}) \quad \text{in } B(0,\dfrac{R}{2}).
	\end{equation*}
	From $L_{p}$-Estimate, we have
	\begin{equation*}
		\begin{aligned}
			&\left\Vert U_{j}\(\dfrac{x}{\eps_{j}}\)- W_{j}\(\dfrac{x}{\eps_{j}}\)\right\Vert_{W^{2,\tfrac{q+1}{q}}(B(0,\tfrac{R}{2}))}\\
			&\leq c\left\Vert \eps^{2}g^{kl}_{p}(x) \Gamma_{kl}^{s}(x)\partial_{s}U_{j}\(\dfrac{x}{\eps_{j}}\)-\eps^{2}(g^{kl}_{p}(x)-\delta_{kl})\partial_{kl}U_{j}\(\dfrac{x}{\eps_{j}}\)\right\Vert_{L^{\tfrac{q+1}{q}}(B(0,\tfrac{R}{2}))}\\
			&= c\left( \int_{B(0,\tfrac{R}{2})}\left|\eps_{j}^{2}g^{kl}_{p}(x) \Gamma_{kl}^{s}(x)\partial_{s}U_{j}\(\dfrac{x}{\eps_{j}}\)-\eps_{j}^{2}(g^{kl}_{p}(x)-\delta_{kl})\partial_{kl}U_{j}\(\dfrac{x}{\eps_{j}}\)\right| ^{\tfrac{q+1}{q}}dx\right) ^{\tfrac{q}{q+1}}\\
			&= c\eps_{j}^{\tfrac{Nq}{q+1}}\left( \int_{B(0,\tfrac{R}{2\eps_{j}})}\left|\eps_{j} g^{kl}_{p}(\eps_{j} x) \Gamma_{kl}^{s}(\eps_{j} x)\partial_{s}U_{j}(x)-(g^{kl}_{p}(\eps_{j} x)-\delta_{kl})\partial_{kl}U_{j}(x)\right| ^{\tfrac{q+1}{q}}dx\right) ^{\tfrac{q}{q+1}}\\
			&=O(\eps_{j}^{\tfrac{Nq}{q+1}+2}),
		\end{aligned}
	\end{equation*}
	where the estimate in last line follows from the expansions \eqref{metricexpansion} and \eqref{eqChristoffel} and the fact that $(U_j, V_j)$ are $C^2_{loc}$ with exponential decay \eqref{expoDecayR}. Now arguing similar to Step 2 of \cref{UE1a} we get
	\begin{equation}
		\begin{aligned}
			\int_{B_{g}(p_{j},\tfrac{R}{2})}u_{j}^{p}T_{1}v_{j}^{q}dv_g &=\int_{B(0,\tfrac{R}{2})}U_{j}^{p}\(\dfrac{x}{\eps_{j}}\)(-\eps^{2}\Delta+id)^{-1}V_{j}^{q}\(\dfrac{x}{\eps_{j}}\)\sqrt{g_{p_{j}}(x)}dx+o(\eps_{j}^{N+2}),\\
			\int_{B_{g}(p_{j},\tfrac{R}{2})}v_{j}^{q}T_{2}u_{j}^{p}dv_g
			&=\int_{B(0,\tfrac{R}{2})}V_{j}^{q}\(\dfrac{x}{\eps_{j}}\)(-\eps^{2}\Delta+id)^{-1}U_{j}^{p}\(\dfrac{x}{\eps_{j}}\)\sqrt{g_{p_{j}}(x)}dx+o(\eps_{j}^{N+2}).
		\end{aligned}
	\end{equation} 
	We also note that from an argument similar to \cref{UE1a}, we have
	\begin{equation*}
		\begin{aligned}
			\int_{B(0,\tfrac{R}{2\eps_{j}})} (U_{j}^{p}S_{1}V_{j}^{q}+V_{j}^{q}S_{2}U_{j}^{p})dy =\int_{B(0,\tfrac{R}{2\eps_{j}})} U_{j}^{p+1}+V_{j}^{q+1}dy+o(\eps_{j}).
		\end{aligned}
	\end{equation*}
	Combining all these terms we get
	\begin{equation*}
		\begin{aligned}
			&\int_{B_{g}(p_{j},\tfrac{R}{2})}(u_{j}^{p}T_{1}v_{j}^{q}+v_{j}^{q}T_{2}u_{j}^{p})dv_g\\
			&=\int_{B(0,\tfrac{R}{2})} \(U_{j}^{p}\(\dfrac{x}{\eps_{j}}\)(-\eps^{2}\Delta+id)^{-1}V_{j}^{q}\(\dfrac{x}{\eps_{j}}\)+V_{j}^{q}\(\dfrac{x}{\eps_{j}}\)(-\eps^{2}\Delta+id)^{-1}U_{j}^{p}\(\dfrac{x}{\eps_{j}}\)\)\sqrt{g_{p_{j}}(x)}dx+o(\eps_{j}^{N+2})\\
			&=\eps_{j}^{N}\int_{B(0,\tfrac{R}{2\eps_{j}})} \(U_{j}^{p}(y)(-\Delta+id)^{-1}V_{j}^{q}(y)+V_{j}^{q}(y)(-\Delta+id)^{-1}U_{j}^{p}(y)\)\sqrt{g_{p_{j}}(\eps y)}dy+o(\eps_{j}^{N+2})\\
			&=\eps_{j}^{N}\Big[\int_{B(0,\tfrac{R}{2\eps_{j}})} \(U_{j}^{p}S_{1}V_{j}^{q}+V_{j}^{q}S_{2}U_{j}^{p}\)dy-\int_{B(0,\tfrac{R}{2\eps_{j}})}\dfrac{1}{6}Ric(\eps_j y, \eps_j y)(U_{j}^{p+1}(y)+V_{j}^{q+1}(y))dy+o(\eps_{j}^{2})\Big]\\
		\end{aligned}
	\end{equation*}
	We now compare the estimate using Mountain Pass geometry to get the lower energy estimate.\\
	
	\noindent\textbf{Step II: Comparison using MP-Geometry:} Let us denote $(\tilde{U}_j, \tilde{V}_j):=(U_j^p, V_j^q)$. Then from Step (I), we have,
	\begin{align}
		\eps_{j}^{-N}H_j(t)&=\int_{\rN}\(\frac{t^\al}{\al}\tilde{U}_j^{\al}(y)+\frac{t^\beta}{\beta}\tilde{V}_j^{\beta}(y)\)dy-\frac{t^2}{2}\int_{\rN}\(\tilde{U}_j S_1\tilde{V}_j+\tilde{V}_jS_2\tilde{U}_j\)dy\notag\\
		&-\int_{\rN}\dfrac{1}{6}Ric(\eps_j y, \eps_j y)\Big[\(\frac{t^\al}{\al}-\frac{t^2}{2}\)\tilde{U}_j^{\al}(y)+\(\frac{t^\beta}{\beta}-\frac{t^2}{2}\)\tilde{V}_j^{\beta}(y)\Big]dy+o(\eps_j^2)\notag\\
		&=I_\infty(t(\tilde{U}_j,\tilde{V}_j))+G_j(t)+o(\eps_j^2)\label{loenes6}.
	\end{align}
	At the maximum point $t_j$ of $I_\infty(t(\tilde{U}_j,\tilde{V}_j))$, we have 
	\be \ml{C}_\infty\le I_\infty(t_j(\tilde{U}_j,\tilde{V}_j)).\label{loenes7}\ee
	Also we see that 
	\begin{equation*}
		\begin{aligned}
			\left.\frac{d}{dt} I_\infty(t(\tilde{U}_j,\tilde{V}_j))\right|_{t=1} &\to\int_{\rN}\(\tilde{U}^{\al}+\tilde{V}^{\beta}\)dy-\int_{\rN}\(\tilde{U} S_1\tilde{V}+\tilde{V}S_2\tilde{U}\)dy=0,\\
			\text{and }	\quad \left.\frac{d^2}{dt^2} I_\infty(t(\tilde{U}_j,\tilde{V}_j))\right| _{t=1}&\to \(\al-2\)\int_{\rN}|U|^{p+1}dx+\(\beta-2\)\int_{\rN}|V|^{q+1}dx+o(1)<0.
		\end{aligned}
	\end{equation*}
	Hence the unique maximum $t_j$ of $I_\infty(t(\tilde{U}_j,\tilde{V}_j))$ is of the form $t_j=1+o(1)$ as $\eps_j\to 0$. We now estimate the terms in $G_{j}(t)$.
	\begin{align*}
		&\int_{\rN}\dfrac{1}{6}Ric(\eps_j y, \eps_j y)\(\frac{t^\al}{\al}-\frac{t^2}{2}\)\tilde{U}_j^{\al}(y)dy\\
		&=\int_{\rN}\dfrac{1}{6}Ric(\eps_j y, \eps_j y)\(\frac{t^\al}{\al}-\frac{t^2}{2}\)U^p(y)dy+\int_{\rN}\dfrac{1}{6}Ric(\eps_j y, \eps_j y)\(\frac{t^\al}{\al}-\frac{t^2}{2}\)\(U_j^{p}(y)-U^{p}(y)\)dy.
	\end{align*}
	From the exponential decay of $U_j$ and $U$, together with the estimates $t_{j}=1+o(1)$, $Ric(\eps_{j} y, \eps_{j} y)=O(\eps_{j}^{2}|y|^2)$, and the convergence $U_{j}\to U$ in $C^2_{loc}(\rN)$, we have that the second term in the above integral is of $o(\eps^{2}_{j})$. Since $U$ is radial, we have (see Proposition 3.4 of \cite{MR2180862})
	\begin{align*}
	 	\int_{\rN}\dfrac{1}{6}Ric(\eps_j y, \eps_j y)\(\frac{t^\al}{\al}-\frac{t^2}{2}\)U^p(y)dy=\eps_{j}^{2}\dfrac{S(p_{j})}{6N}\int_{\rN}\frac{p-1}{2(p+1)}U^{p+1}(y)dy+o(\eps^2_j).
	 \end{align*}
	 Calculating similarly for the integral corresponding to $\tilde{V}_j$, we conclude
	\begin{align}
		G_j(t_j)&=-\eps_{j}^{2}\dfrac{S(p_{j})}{6N}\int_{\rN}\Big[\frac{p-1}{2(p+1)}U^{p+1}(y)+\frac{q-1}{2(q+1)}V^{q+1}(y)\Big]|y|^{2}dy+o(\eps_j^2).\label{loenes8}
	\end{align}
	Combining \ef{loenes2}, \ef{loenes6}, \ef{loenes7}, and \ef{loenes8} we conclude
	\begin{equation*}
		c_{\eps_{j}}\geq\eps_{j}^{N}\left[ \ml{C}_\infty-\eps_{j}^{2}\dfrac{S(p_{j})}{6N}\left(\dfrac{p-1}{2(p+1)}\int_{\R^{N}}U^{p+1}(y)|y|^{2}dy+\dfrac{q-1}{2(q+1)}\int_{\R^{N}}V^{q+1}(y)|y|^{2}dy\right)+o(\eps_{j}^{2})\right].
	\end{equation*}
\end{proof}	
We now prove our main result Theorem \ref{Thm}:
\begin{proof}
	Let us denote
	\begin{equation*}
		\eta(U,V):=\dfrac{p-1}{2(p+1)}\int_{\rN}U^{p+1}(y)|y|^{2}dy-\dfrac{q-1}{2(q+1)}\int_{\rN}V^{q+1}\(y\)|y|^{2}dy.
	\end{equation*}
	From \cref{cptsolnsp} we have that both $\max_{(W,Z)\in \ml{L}}\eta(W,Z)$ and $\min_{(W,Z)\in \ml{L}}\eta(W,Z)$ are finite. Now from the Upper energy estimate \cref{UEEa}, we have the following:
	
	If $max_{x\in\M}S(x)\ge 0$, we have 
	\be c_{\eps} \leq \eps^{N}\left[ \ml{C}_\infty-\dfrac{\eps^{2}}{6N}\max_{p\in \M}S(p)\max_{(W,Z)\in \ml{L}}\eta(W,Z)+ o(\eps^{2})\right].\label{upcon1}\ee
	And if $max_{x\in\M}S(x)\le 0$, we have
	\be c_{\eps} \leq \eps^{N}\left[ \ml{C}_\infty-\dfrac{\eps^{2}}{6N}\max_{p\in \M}S(p)\min_{(W,Z)\in \ml{L}}\eta(W,Z)+ o(\eps^{2})\right].\label{upcon2}\ee
	Using the Lower energy estimate \cref{LEEa} we get, for  $max_{x\in\M}S(x)\ge 0$,
	\be S(p_j)\eta(U,V)\ge \max_{p\in \M}S(p)\max_{(W,Z)\in \ml{L}}\eta(W,Z)+o(1).\nee
	And if $max_{x\in\M}S(x)\le 0$,
	\be S(p_j)\eta(U,V)\ge \max_{p\in \M}S(p)\min_{(W,Z)\in \ml{L}}\eta(W,Z)+o(1).\nee
	Therefore we conclude that 
	\be S(p_\eps) \to \max_{p\in \M}S(p).\nee
\end{proof}

\appendix

\section{Boot-strap Argument for the Hamiltonian system}\label{APPB}

Let $(u_\eps,v_\eps)\in W^{2,\beta}(\M)\times W^{2,\al}(\M)$ be a solution of \ef{eq0}. We prove that $u_\eps,v_\eps\in C^{2,\al}(M)$ for some $0<\al<1$. For convenience, we omit the subscript and denote the solution by $(u, v)$. A similar regularity result can be found in the proof of Theorem 1 in \cite{MR1785681}.

\textbf{Step 0:} We shall construct an increasing sequence.
First note by Sobolev embedding, we have $(u, v) \in L^{p_{1}}(\M) \times L^{q_{1}}(\M)$, where 
\be p_{1}=\beta^*=\dfrac{N(q+1)}{Nq-2(q+1)} \text{ and } q_{1}=\al^*=\dfrac{N(p+1)}{Np-2(p+1)}.\nee

It follows from \eqref{HC} that 
\begin{equation*}
	\frac{1}{p+1}+\frac{1}{q+1}>\frac{N-2}{N} \iff p+1<p_{1} \text{ and } q+1<q_{1}.
\end{equation*}

If $Nq-2(q+1)\leq 0$ and $Np-2(p+1) \leq 0$, we have nothing to prove. Assume not.

\textbf{Step 1: } Next step we have $|v|^{q-1}v\in L^{\frac{q_1}{q}}(\M)$ which again implies (using \cref{LP-estimate}) that $u\in W^{2,\frac{q_1}{q}}(\M)$. Using Sobolev Embedding, we have $u \in L^{p_{2}}(\M)$, where \be p_{2}=\frac{Nq_1}{Nq-2q_1}.\nee This gives $|u|^{p-1}u\in L^{\frac{p_2}{p}}(\M)$ which again implies (using \cref{LP-estimate}) that $v\in W^{2,\frac{p_2}{p}}(\M)$. Again, using Sobolev embedding, we have $v\in L^{q_{2}}(\M)$, where 
\be q_{2}=\frac{Np_2}{Np-2p_2}.\nee

Now we see that $q_1>q+1$ implies
\begin{align*}
	p_2-p_1=\frac{Nq_1}{Nq-2q_1}-\dfrac{N(q+1)}{Nq-2(q+1)}>0.
\end{align*}
Also,
\begin{align*}
	q_2-q_1=\frac{Np_2}{Np-2p_2}-q_1=\frac{N\frac{Nq_1}{Nq-2q_1}}{Np-2\frac{Nq_1}{Nq-2q_1}}-q_1=q_1\(\frac{N}{Npq-2q_1(p+1)}-1\).
\end{align*}
Hence, 
\begin{align*}
	q_2-q_1>0 \Longleftrightarrow N>Npq-2q_1(p+1) \Longleftrightarrow q_1>\frac{N(pq-1)}{2(p+1)}.
\end{align*}
Let us take $h_{q}(p)=\frac{N(pq-1)}{2p+2}$. Now from the condition \ef{HC} we have
\be 1<p< \frac{N+2(q+1)}{Nq-2(q+1)}=\bar{p} \text{ say,}\nee 
Then,
\begin{equation*}
	\begin{aligned}
		h_{q}^{\prime}(p)
		&=\dfrac{N(q+1)}{2(p+1)^{2}}>0,\\
	\end{aligned}
\end{equation*}
implies the maximum value of $h_q(p)$ is $h_q(\bar{p})=q+1.$ This gives $q_1>q+1>h_q(p)=\frac{N(pq-1)}{2p+2}$ and $q_{2}>q_{1}$.

\textbf{Step 2: } Now we have $|u|^{p-1}u\in L^{\frac{p_2}{p}}(\M)$ and $|v|^{q-1}v\in L^{\frac{q_2}{q}}(\M)$ which again implies that $u\in W^{2,\frac{q_2}{q}}(\M)$ and $v\in W^{2,\frac{p_2}{p}}(\M)$. Again, using Sobolev Embedding, we have $(u, v) \in L^{p_{3}}(\M) \times L^{q_{3}}(\M)$, where 
\be p_{3}=\frac{Nq_2}{Nq-2q_2} \text{ and } q_{3}=\frac{Np_2}{Np-2p_2}.\nee

Continuing this process we construct two sequences $p_{n}$ and $q_{n}$ such that
\begin{equation*}
	\begin{aligned}
		p_{n+1}&=\dfrac{Nq_{n}}{Nq-2q_{n}} \qquad \text{ and }\qquad  q_{n+1}&=\dfrac{Np_{n+1}}{Np-2p_{n+1}}.
	\end{aligned}
\end{equation*}

It is easy to see that, if $p_n>p_{n-1}$ and $q_n>q_{n-1}$ then from the above expression $p_{n+1}>p_{n}$ and $q_{n+1}>q_{n}.$ 

\textbf{Step 3: } We shall show that $p_n,q_n\to \infty.$ If possible let $p_n\to p_0$ then we see that $q_n \to \dfrac{Np_0}{Np-2p_0}=q_0$ say. And from the above expression we get  
\begin{equation*}
	p_0=\dfrac{Nq_0}{Nq-2q_0}, \qquad q_0=\dfrac{Np_0}{Np-2p_0}.
\end{equation*} 
Thus, 
\begin{equation*}
	\begin{aligned}
		q_0&=\dfrac{\frac{N^{2}q_0}{Nq-2q_0}}{Np-\frac{2Nq_0}{Nq-2q_0}}
		=\dfrac{Nq_0}{p(Nq-2q_0)-2q_0}
		\implies Npq-2pq_0-2q_0=N\\
		&\implies N(pq-1)=q_0(2p+2)\implies q_0= \frac{N(pq-1)}{2p+2}<q+1\Rightarrow\Leftarrow.
	\end{aligned}
\end{equation*}
Hence both $p_n$ and $q_n$ converges to infinity. Then by Sobolev embedding and Schauder estimate we have both $u_\eps,v_\eps\in C^{2,\al}(M)$ for some $0<\al<1$.

\section*{Acknowledgment}
The first author, Anusree R is supported by the Ministry of Education, Govt. of India. The second author is supported by SERB-MATRICS grant, the Ministry of Education, Govt. of India.

\section*{Data availability statement:} Not applicable as no new datasets were created on analyzed in this study.


\bibliographystyle{plain}

\end{document}